\documentclass [11pt,oneside,a4paper,mathscr]{amsart}

\usepackage{amscd,amsmath,amssymb,euscript}
\usepackage[frame,cmtip,curve,arrow,matrix,line,graph]{xy}
\usepackage{tikz-cd}
\usepackage{bigints}
\usetikzlibrary{matrix}
\usepackage{hyperref}
\usepackage{stmaryrd}

\title[Categorical and K-theoretic DT of $\mathbb{C}^3$ (part II)]{Categorical and K-theoretic Donaldson-Thomas theory of $\mathbb{C}^3$ (part II)}
\author{Tudor P\u adurariu and Yukinobu Toda}



\setlength{\oddsidemargin}{0.5\oddsidemargin}
\setlength{\evensidemargin}{0.5\oddsidemargin}
\setlength{\textheight}{1.02\textheight}
\setlength{\textwidth}{1.1\textwidth}

\newtheorem{thm}{Theorem}[section]
\newtheorem{cor}[thm]{Corollary}
\newtheorem{conj}[thm]{Conjecture}
\newtheorem{prop}[thm]{Proposition}
\newtheorem{lemma}[thm]{Lemma}

\theoremstyle{definition}
\newtheorem{defn}[thm]{Definition}

\newtheorem{thm*}[thm]{Theorem$^*$}
\newtheorem{remark}[thm]{Remark}

\newcommand{\comment}[1]{}

\renewcommand{\leq}{\leqslant}
\renewcommand{\geq}{\geqslant}



\newcommand{\F}{\mathcal{F}}

\newcommand{\OO}{\mathcal{O}}

\newcommand{\X}{\mathcal{X}}


\newcommand{\Coh}{\operatorname{Coh}}

\newcommand{\Ext}{\operatorname{Ext}}
\newcommand{\Hom}{\operatorname{Hom}}

\newcommand{\Spec}{\operatorname{Spec}}

\newcommand{\C}{\mathbb{C}}

\newcommand{\ee}{\underline{e}}
\newcommand{\dd}{\underline{d}}

\newcommand{\Tr}{\mathop{\mathrm{Tr}}\nolimits}

\newcommand{\relmiddle}[1]{\mathrel{}\middle#1\mathrel{}}

\newcommand{\ssslash}{/\!\!/}

\makeatletter

\@addtoreset{equation}{section}	
\makeatother


\usetikzlibrary{positioning,fit,calc}
\tikzstyle{block}=[draw=black, width=1cm, minimum height=2cm, align=center] 
\tikzstyle{block2}=[draw=black, text width=2cm, minimum height=1cm, align=center] 
\tikzstyle{block3}=[draw=black, text width=2cm, minimum height=1cm, align=center] 

\begin{document}
\maketitle
\begin{abstract}
Quasi-BPS categories appear as summands in semiorthogonal decompositions of DT categories for 
Hilbert schemes of points in the three dimensional affine space and in the categorical Hall algebra of the two dimensional affine space. 
In this paper, we prove several properties of quasi-BPS categories analogous
to BPS sheaves in cohomological DT theory.

    We first 
    prove a categorical analogue of Davison's support lemma, namely that complexes in the quasi-BPS categories for  coprime length and weight are supported over the small diagonal in the symmetric product 
    of the three dimensional affine space.
    The categorical support lemma is used to determine the torsion-free
    generator of the torus equivariant K-theory of the quasi-BPS category of 
    coprime length and weight. 
    
    We next construct a bialgebra structure on the torsion free equivariant K-theory of quasi-BPS categories for a
    fixed ratio of length and weight. 
  We define the K-theoretic BPS space 
  as the space of primitive elements with respect to the coproduct. 
    We show that all localized equivariant K-theoretic BPS spaces are one dimensional, which is a K-theoretic analogue of the computation of (numerical) BPS invariants of the three dimensional affine space.  
\end{abstract}

\section{Introduction}

\subsection{Quasi-BPS categories}

In \cite{PT0, P, P2}, we studied quasi-BPS categories $\mathbb{S}(d)_w$ for $d\in \mathbb{N}$
(length) 
and $w\in \mathbb{Z}$ (weight)
in relation to categorical Donaldson-Thomas (DT) theory and to categorical Hall algebras (of surfaces and of
quivers with potentials). 
They are defined to be full subcategories
of the category of matrix factorizations
\begin{align}\label{intro:Sdw}
    \mathbb{S}(d)_w :=\mathrm{MF}(\mathbb{M}(d)_w, \Tr W_d)
    \subset \mathrm{MF}(\mathcal{X}(d), \Tr W_d),
\end{align}
where $\mathbb{M}(d)_w$ is a
twisted 
noncommutative resolution first considered by \v{S}penko--Van den Bergh~\cite{SVdB}, 
see Subsection~\ref{subsection:quasibps} for more details. 
Here, the stack $\mathcal{X}(d)$ and the regular function $\Tr W_d$
are given by 
\begin{align}\label{def:Xd}
    \mathcal{X}(d):=\Hom(V, V)^{\oplus 3}/GL(V), \ 
    \Tr W_d(A, B, C)=\Tr A[B, C],
\end{align}
where $V$ is a $d$-dimensional vector space. 

There is also a 
graded quasi-BPS category $\mathbb{S}^{\rm{gr}}(d)_v$, which is equivalent (via Koszul duality)
to a subcategory
\begin{align*}
    \mathbb{T}(d)_v \subset D^b(\mathscr{C}(d)),
\end{align*}
where $\mathscr{C}(d)$ is the derived moduli stack of zero-dimensional 
sheaves on $\mathbb{C}^2$
with length $d$. We will also consider $T$-equivariant versions of these categories, where
$T=(\mathbb{C}^*)^2$ is the Calabi-Yau torus of $\mathbb{C}^3$. 
We denote by $\mathbb{K}=K(BT)=\mathbb{Z}[q_1^{\pm 1}, q_2^{\pm 1}]$
and by $\mathbb{F}$ the fractional field of $\mathbb{K}$. 
The ($T$-equivariant or not) Grothendieck groups of $\mathbb{S}(d)_w$ and $\mathbb{S}^{\mathrm{gr}}(d)_w\cong \mathbb{T}(d)_w$ are isomorphic.

The purpose of this paper is to prove several 
properties of quasi-BPS categories 
analogous to BPS sheaves in cohomological DT theory, 
and use these properties to 
compute the $T$-equivariant K-theory of quasi-BPS categories.

\subsection{Semiorthogonal decompositions}
\label{subsection:introBPS}
We briefly review semiorthogonal decompositions 
with summands given by quasi-BPS categories proved in~\cite{PT0, P}. 

In our previous paper~\cite{PT0}, 
we constructed a semiorthogonal decomposition of 
the categorification $\mathcal{DT}(d)$
of the Donaldson-Thomas invariant $\mathrm{DT}_d$,
which is a virtual count of zero-dimensional closed subschemes in $\mathbb{C}^3$. 
The category $\mathcal{DT}(d)$ is defined by 
\[\mathcal{DT}\left(d\right):=\text{MF}\left(\text{NHilb}(d), \Tr W_d \right),\]
see \cite[Subsection 1.5]{PT0}. 
Here $\text{NHilb}(d)$ is the noncommutative Hilbert scheme of points 
\[\text{NHilb}(d):=\left(V\oplus \text{Hom}(V, V)^{\oplus 3}\right)^{\text{ss}}/GL(V).\] 
More precisely, in \cite[Theorem 1.1]{PT0} we showed that there is
a semiorthogonal decomposition
\begin{equation}\label{thm:1PT0}
    \mathcal{DT}(d)=
\left\langle \boxtimes_{i=1}^k \mathbb{S}(d_i)_{v_i+d_i\left(\sum_{i>j}d_j-\sum_{j>i}d_j\right)}
\relmiddle|
	\begin{array}{c}
	0\leq v_1/d_1 < \cdots < v_k/d_k<1 \\
	d_1+\cdots+d_k=d
	\end{array}
\right\rangle.\end{equation} 

There is also a semiorthogonal decomposition of $D^b(\mathscr{C}(d))$
in graded quasi-BPS categories, 
see~\cite[Theorem 1.1]{P}:
\begin{equation}\label{thm:1P}
D^b\left(\mathscr{C}(d)\right)=\left\langle \boxtimes_{i=1}^k \mathbb{T}(d_i)_{v_i} \relmiddle|
	\begin{array}{c}
	v_1/d_1 < \cdots < v_k/d_k \\
	d_1+\cdots+d_k=d
	\end{array} \right\rangle.
\end{equation}


\subsection{Numerical and cohomological BPS invariants}
It is expected that, for any smooth Calabi-Yau threefold $X$, there are certain deformation invariant integers called \textit{BPS invariants} which determine the Donaldson-Thomas and Gromov-Witten invariants of $X$, see \cite[Section 2 and a half]{MR3221298}. 
Denote by $\Omega_d$ the BPS invariants of $\mathbb{C}^3$ for $d\geq 1$. Then
\begin{equation}\label{Omegad}
\Omega_d=-1\text{ for all }d\geq 1.
\end{equation}
The wall-crossing formula for the DT invariants of $\mathbb{C}^3$, see~\cite[Section~6.3]{JS}, \cite[Remark~5.14]{T5}, says that
\begin{align}\label{wallcrossingDT}
	\sum_{d \geq 0} \mathrm{DT}_{d}q^d=
	\prod_{d\geq 1}(1-(-q)^d)^{d \Omega_d}=\prod_{d\geq 1}\frac{1}{\left(1-(-q)^d\right)^d},
	\end{align}
	see~\cite[Subsection 1.6]{PT0} for more details.

Davison--Meinhardt~\cite{DM} constructed a perverse sheaf $\mathcal{B}PS_d$ on $\mathrm{Sym}^d(\mathbb{C}^3)$, 
called \textit{BPS sheaf}, 
whose Euler characteristic recovers the BPS invariant $\Omega_d$. Davison \cite[Theorem 5.1]{Dav} showed that 
\begin{equation}\label{supportlemma}
    \mathcal{B}PS_d=\Delta_*\mathrm{IC}_{\mathbb{C}^3},
\end{equation} where $\Delta\colon \mathbb{C}^3\hookrightarrow \mathrm{Sym}^d(\mathbb{C}^3)$ is the small diagonal. Davison \cite[Lemma 4.1]{Dav} also proved restrictions on the support of BPS sheaves for tripled quivers with potentials and used \cite[Lemma 4.1]{Dav} to prove purity results about stacks of representations of preprojective algebras \cite[Theorem A]{Dav}.
We refer to \eqref{supportlemma} as Davison's support lemma. 
The (cohomological) BPS spaces are the cohomology of the BPS sheaf.

Properties (and computations in special cases) of BPS sheaves have applications in the study of Hodge theory of various cohomological DT spaces (in particular, in proving purity of the Borel-Moore homology of moduli of objects in K$3$ categories \cite{Davi}) and in the study of Cohomological Hall algebras \cite{D, KaVa2}. 

Our point of view is that the semiorthogonal decomposition (\ref{thm:1PT0}) may be
regarded as a categorification of the wall-crossing formula (\ref{wallcrossingDT})
and the category $\mathbb{S}(d)_w$ may be regarded as a categorical analogue of the
BPS invariant (\ref{Omegad}) or BPS sheaf (\ref{supportlemma}). 
In this paper, we 
make the above heuristic more rigorous. 
Let $(d, w)\in \mathbb{N}\times\mathbb{Z}$ be integers with $\gcd(d, w)=1$.
We first prove a categorical analogue of (\ref{supportlemma}), namely 
any object of $\mathbb{S}(d)_w$ is supported over the small diagonal 
in $\mathrm{Sym}^d(\mathbb{C}^3)$. 
Further, for $n\in \mathbb{N}$, we define a K-theoretic analogue of the cohomological BPS space:
\[\mathrm{P}(nd)_{nw}\subset K_T\left(\mathbb{S}(nd)_{nw}\right)\cong K_T\left(\mathbb{T}(nd)_{nw}\right)\] 
and show that $\mathrm{P}(nd)_{nw,\mathbb{F}}:=\mathrm{P}(nd)_{nw}\otimes_{\mathbb{K}}\mathbb{F}$ is a one dimensional $\mathbb{F}$-vector space, 
compare with \eqref{Omegad}.

\subsection{Support of matrix factorizations in quasi-BPS categories}\label{ss13bis}
We prove a version of the support lemma \eqref{supportlemma} for categories $\mathbb{S}(d)_w$ with $\gcd(d, w)=1$.
We consider the quotient stack $\mathcal{X}(d)$
defined in (\ref{def:Xd}) together
with its good moduli space 
\begin{align*}
    \pi \colon \mathcal{X}(d) \to X(d):=\Hom(V, V)^{\oplus 3} \sslash GL(V). 
\end{align*}
Consider the diagram 
\begin{align*}\label{dia:Cohsym}
	\xymatrix{
		\mathcal{C}oh(\mathbb{C}^3, d) \ar@{=}[r] \ar[d]_-{\pi} & \mathrm{Crit}(\Tr W_d)  \ar@<-0.3ex>@{^{(}->}[r]&
		\mathcal{X}(d) \ar[rd]^-{\Tr W_d} \ar[d]_-{\pi} & \\
		\mathrm{Sym}^d(\mathbb{C}^3)  \ar@<-0.3ex>@{^{(}->}[rr]& & X(d) \ar[r] & \mathbb{C},
}
	\end{align*}
	where $\mathcal{C}oh(\mathbb{C}^3, d)$ is the stack of sheaves with zero dimensional support and length $d$ on $\mathbb{C}^3$.
In Section \ref{s3}, we prove the following 
(see Theorem~\ref{lem:support} for a more precise statement):

\begin{thm}\emph{(Theorem~\ref{lem:support})}\label{intro:thm1}
    For a pair $(d, w) \in \mathbb{N}\times\mathbb{Z}$ with $\gcd(d, w)=1$, 
any object in $\mathbb{S}(d)_w$ is supported on 
$\pi^{-1}(\Delta)$. 
\end{thm}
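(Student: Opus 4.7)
The plan is to work pointwise on the good moduli space: since the support of a matrix factorization can be detected by pullback to closed fibers, it suffices to show that for each closed point $x \in X(d) \setminus \Delta$, the restriction $\iota_x^* \mathcal{F}$ vanishes for every $\mathcal{F} \in \mathbb{S}(d)_w$, where $\iota_x \colon \pi^{-1}(x) \hookrightarrow \mathcal{X}(d)$ is the inclusion of the (reduced) fiber. A closed point $x \notin \Delta$ corresponds to a polystable commuting triple whose underlying representation decomposes as $V = \bigoplus_{i=1}^k V_i$ with $\dim V_i = d_i$, associated to pairwise distinct points of $\mathbb{C}^3$, and with stabilizer a proper Levi $L = \prod_{i=1}^k GL(V_i) \subsetneq GL(V)$ with $k \geq 2$. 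By Luna's \'etale slice theorem applied to the good moduli map $\pi$, a formal (or \'etale) neighborhood of $x$ in $\mathcal{X}(d)$ is isomorphic to a neighborhood of the origin in $[N/L]$, where $N$ is the slice $L$-representation; the potential $\Tr W_d$ pulls back to the corresponding trace potential on $[N/L]$ since it is $GL(V)$-invariant.

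Next I would invoke the \v{S}penko--Van den Bergh theory of magic windows, together with its compatibility with Luna slices as developed in the context of noncommutative resolutions: the restriction of the window subcategory $\mathbb{M}(d)_w$ to the \'etale neighborhood of $x$ splits, up to contracting the unipotent directions, as a sum of external products $\boxtimes_{i=1}^k \mathbb{M}(d_i)_{w_i}$ indexed by weight tuples $(w_1, \ldots, w_k) \in \mathbb{Z}^k$ with $\sum_i w_i = w$ that survive in the restricted magic window polytope for $L$. Passing to matrix factorizations with respect to the pulled-back potential then yields an \'etale-local description of $\mathbb{S}(d)_w$ in terms of external products of quasi-BPS categories $\mathbb{S}(d_i)_{w_i}$, compatible with the semiorthogonal decomposition \eqref{thm:1PT0}.

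The combinatorial heart of the argument is that the tuples surviving at the Levi must satisfy $w_i/d_i = w/d$ for every $i$: the strict inequality chain $v_1/d_1 < \cdots < v_k/d_k$ that appears in \eqref{thm:1PT0} degenerates to equality precisely when the representation becomes polystable along the Levi, so only the boundary case of the magic window inequalities contributes after restriction. The coprimality assumption $\gcd(d, w) = 1$ then forces a contradiction, since $d_i w = w_i d$ with $w_i \in \mathbb{Z}$ implies $d \mid d_i$, and together with $d_i \geq 1$ and $\sum d_i = d$ this yields $k = 1$, contrary to the assumption $x \notin \Delta$. Hence no admissible tuple $(w_1, \ldots, w_k)$ exists and the \'etale-local restriction of $\mathbb{S}(d)_w$ to a neighborhood of $x$ vanishes, proving the theorem.

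The main obstacle I anticipate is to rigorously transport the \v{S}penko--Van den Bergh window structure, originally formulated for the stack $\mathcal{X}(d)$ with trivial potential, through the Luna slice and the matrix factorization formalism while keeping track of the $T$-equivariant grading. In particular, one needs a clean ``restriction-to-Levi'' theorem that identifies exactly which tuples $(d_i, w_i)$ can occur in the \'etale-local decomposition; establishing that only the boundary case $w_i/d_i = w/d$ survives, rather than all tuples with $w_i/d_i$ ranging over an interval, is the decisive step that makes the coprimality hypothesis effective.
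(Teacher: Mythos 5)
Your overall skeleton agrees with the paper's proof of Theorem~\ref{lem:support}: localize at a closed point $p$ of the good moduli space off the small diagonal, use the \'etale/Luna slice to describe $\mathcal{X}_p(d)$ as $\widehat{\Ext}^1_Q(R,R)/G_p$ with $G_p=\prod_i GL(V^{(i)})$, argue that the window category seen locally is built only from tuples $(d^{(i)},w^{(i)})$ of equal slope $w^{(i)}/d^{(i)}=w/d$, and then let coprimality force $l=1$. However, the decisive step is exactly the one you defer to your last paragraph, and as written it is a genuine gap rather than a proof. Your justification that ``the strict inequality chain in \eqref{thm:1PT0} degenerates to equality precisely when the representation becomes polystable along the Levi'' is a heuristic, not an argument: \emph{a priori} the local category $\mathrm{MF}(\mathcal{X}_p(d),\Tr W_d)$ contains Hall products of local quasi-BPS categories for \emph{all} slope-increasing tuples, and one must show that the summands with $k\geq 2$ strictly increasing slopes are right orthogonal to the local window $\mathbb{S}_p(d)_w$ (the paper's Lemma~\ref{sublem2}, a polytope/$r$-invariant estimate), and separately that the local window condition matches the global one (Lemma~\ref{sublem1}). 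Without these, nothing pins down the surviving weight tuples, and the coprimality argument has nothing to act on.

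A second omission is structural rather than combinatorial: the slice potential is \emph{not} simply the trace potential of the smaller quivers. By Lemma~\ref{sublem:1.5}, after a change of variables it is $\Tr W_{d^{(1)}}\boxplus\Tr W_{d^{(2)}}\boxplus q$, where $q$ is a nondegenerate quadratic form on the off-diagonal part $U\oplus U^{\vee}$, $U=\Hom(V^{(1)},V^{(2)})\oplus\Hom(V^{(2)},V^{(1)})$, coming from the $\Hom(V^{(i)},V^{(j)})^{\oplus 2}$ summands of $\Ext^1_Q(R,R)$. Relating matrix factorizations on the slice to the external product of the factors' categories therefore requires Kn\"orrer periodicity (the functor \eqref{Kn:eq}, fully faithful with dense image), together with a check that Kn\"orrer periodicity preserves the window conditions — the paper's Lemma~\ref{lemma:restS}, which uses the self-duality of $U$ to compare the bounds $n_{\lambda,p}$ and $n'_{\lambda,p}$. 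Your proposal assumes a clean ``restriction-to-Levi'' splitting of $\mathbb{M}(d)_w$ into $\boxtimes_i\mathbb{M}(d_i)_{w_i}$, but such a direct-sum splitting is not what holds: the equal-slope products embed with dense image only after Kn\"orrer periodicity and only after the orthogonality lemma eliminates the other summands of the local semiorthogonal decomposition. These are precisely the ingredients your attempt identifies as ``the main obstacle'' but does not supply, so the proof is incomplete at its essential point. (A minor further caveat: support should be tested on formal fibers $\mathcal{X}_p(d)$ rather than on the reduced fibers $\pi^{-1}(x)$, since the slice description and the window comparison are statements about a formal neighborhood.)
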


The categorical support restriction in Theorem~\ref{intro:thm1}
implies a strong constraint on $T$-equivariant K-theory classes of objects of
quasi-BPS categories. 
Let $\Theta$ be the forget-the-potential map
\begin{equation}\label{definition:forgetpotential}
    \Theta\colon K_T(\mathrm{MF}(\X(d), \Tr W_d))\to K_T(\mathrm{MF}(\X(d), 0))=
    \mathbb{K}[z_1^{\pm 1}, \ldots, z_d^{\pm 1}]^{\mathfrak{S}_d},
\end{equation} see \cite[Proposition 3.6]{P0}.
In Lemma \ref{lemma:div}, we show that if a complex $\mathcal{F}$ is supported on 
$\pi^{-1}(\Delta)$, then $\Theta([\mathcal{F}])$ is divisible by 
the element 
\begin{align}\label{divisible}
(q_1-1)^{d-1}(q_2-1)^{d-1}(q_1 q_2-1)^{d-1} \in \mathbb{K}.
\end{align}
In \cite{PT0}, we introduced certain complexes $\mathcal{E}_{d, w}\in \mathbb{T}(d)_w$ using a derived stack of pairs of commuting matrices, both of which have spectrum of cardinality one, and have an 
explicit shuffle description \eqref{elem:E}.
In particular, Lemma \ref{lemma:div} applies to $\mathcal{E}_{d,w}$ for $\gcd(d, w)=1$. 
We remark that the
divisibility by (\ref{divisible}) 
is not obvious from the shuffle description of $\Theta([\mathcal{E}_{d,w}])$.

\subsection{Integral generator of K-theory of quasi-BPS categories}
In \cite[Theorem 1.2]{PT0}, we showed that the localized (i.e. taking 
$\otimes_{\mathbb{K}} \mathbb{F}$)
$T$-equivariant K-theory of $\mathbb{T}(d)_{w}$ (and thus of $\mathbb{S}(d)_{w}$) is generated by monomials in $[\mathcal{E}_{d', w'}]$ for $d'\leq d$ and $\frac{w}{d}=\frac{w'}{d'}$. 
The divisibility by (\ref{divisible}) will be useful to compute the $T$-equivariant K-theory 
of quasi-BPS categories \textit{without} localization. 
For a $\mathbb{K}$-module $M$, we will use the notation $M':=M/(\mathbb{K}\text{-torsion})$. 
 Using Lemma \ref{lemma:div} and Theorem \ref{intro:thm1}, we show that:
\begin{thm}\emph{(Theorem~\ref{thm:generatordv})}\label{intro:thm2}
Let $(d, w) \in \mathbb{N}\times\mathbb{Z}$ with $\gcd(d, w)=1$.
The $\mathbb{K}$-module $K_T\left(\mathbb{S}(d)_w\right)'$ is free of rank one 
with generator $\left[\mathcal{E}_{d, w}\right]$.
\end{thm}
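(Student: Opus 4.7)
The plan is to combine the localization result from \cite[Theorem~1.2]{PT0} with Theorem~\ref{intro:thm1} and Lemma~\ref{lemma:div}, augmented by an explicit character computation for $[\mathcal{E}_{d,w}]$ via the shuffle description \eqref{elem:E}. By \cite[Theorem~1.2]{PT0}, the localized K-theory $K_T(\mathbb{S}(d)_w)\otimes_{\mathbb{K}}\mathbb{F}$ is spanned over $\mathbb{F}$ by monomials in the classes $[\mathcal{E}_{d', w'}]$ with $d'\leq d$ and $w'/d'=w/d$. The coprimality of $(d,w)$ forces $(d',w')=(d,w)$, so the localized K-theory is $\mathbb{F}$-spanned by the single class $[\mathcal{E}_{d,w}]$, and hence $K_T(\mathbb{S}(d)_w)'$ is torsion-free of rank at most one. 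It remains to prove the integral equality $K_T(\mathbb{S}(d)_w)'=\mathbb{K}\cdot[\mathcal{E}_{d,w}]$.

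The upgrade to the integral statement uses the forget-the-potential map $\Theta$ of \eqref{definition:forgetpotential}. Set $D := (q_1-1)^{d-1}(q_2-1)^{d-1}(q_1q_2-1)^{d-1}$. For any $[\mathcal{F}]\in K_T(\mathbb{S}(d)_w)$, Theorem~\ref{intro:thm1} combined with Lemma~\ref{lemma:div} gives $\Theta([\mathcal{F}])\in D\cdot \mathbb{K}[z_1^{\pm 1},\ldots,z_d^{\pm 1}]^{\mathfrak{S}_d}$. In particular $\Theta([\mathcal{E}_{d,w}])=D\cdot f_{d,w}$ for some specific symmetric Laurent polynomial $f_{d,w}$. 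The key technical step is to evaluate $f_{d,w}$ explicitly enough from \eqref{elem:E} to pinpoint a monomial in the $z_i$ whose coefficient in $f_{d,w}$ lies in $\mathbb{K}^\times=\{\pm q_1^a q_2^b\}$; for instance, the coefficient of an extremal monomial under a suitable total ordering, which should reduce to $\pm 1$ after division by $D$. This simultaneously shows that $[\mathcal{E}_{d,w}]$ is nonzero in the localized K-theory, so the rank of $K_T(\mathbb{S}(d)_w)'$ is exactly one.

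Granted such an $f_{d,w}$, the integral equality follows at once. For any $[\mathcal{F}]\in K_T(\mathbb{S}(d)_w)$, write $[\mathcal{F}]=\alpha\cdot[\mathcal{E}_{d,w}]$ with $\alpha\in\mathbb{F}$, modulo torsion. Applying $\Theta$ gives $\Theta([\mathcal{F}])=\alpha\cdot D\cdot f_{d,w}$ in $\mathbb{F}[z_1^{\pm 1},\ldots,z_d^{\pm 1}]^{\mathfrak{S}_d}$, while Lemma~\ref{lemma:div} also supplies $\Theta([\mathcal{F}])=D\cdot g_{\mathcal{F}}$ with $g_{\mathcal{F}}\in\mathbb{K}[z_1^{\pm 1},\ldots,z_d^{\pm 1}]^{\mathfrak{S}_d}$. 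Since $D$ is a unit in $\mathbb{F}$, cancellation yields $g_{\mathcal{F}}=\alpha f_{d,w}$; comparing the preselected unit coefficient of $f_{d,w}$ with the corresponding coefficient of $g_{\mathcal{F}}$ then forces $\alpha\in\mathbb{K}$. Hence $[\mathcal{F}]\in\mathbb{K}\cdot[\mathcal{E}_{d,w}]$ modulo torsion, completing the proof.

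The main obstacle is the leading-term analysis of the shuffle formula for $\Theta([\mathcal{E}_{d,w}])$: the divisibility by $D$ is not manifest in \eqref{elem:E} but rather forced by the geometric support restriction of Theorem~\ref{intro:thm1}, so extracting the quotient $f_{d,w}$ and locating a coefficient in $\mathbb{K}^\times$ is a genuine combinatorial computation, likely achieved by a specialization (e.g.\ $z_i\mapsto t^i$ for generic $t$) that collapses the shuffle sum to a single dominant term from which the $\pm 1$ coefficient can be read off.
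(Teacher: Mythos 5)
Your proposal reproduces the paper's overall reduction: localized generation of $K_T(\mathbb{S}(d)_w)_{\mathbb{F}}$ by $[\mathcal{E}_{d,w}]$ from \cite{PT0}, divisibility of $\Theta([\mathcal{F}])$ by $D=(q_1-1)^{d-1}(q_2-1)^{d-1}(q_1q_2-1)^{d-1}$ from Theorem~\ref{lem:support} together with Lemma~\ref{lemma:div}, and then a statement about the quotient $f_{d,w}=\Theta([\mathcal{E}_{d,w}])/D$ that forces the scalar $\alpha$ to lie in $\mathbb{K}$. The final deduction from your step 4 is sound, and in fact you are asking for more than is needed: since $\mathbb{K}$ is a UFD, it suffices that $f_{d,w}$ be not divisible by any non-unit of $\mathbb{K}$ (this is exactly what the paper proves); the existence of a monomial with unit coefficient is a strictly stronger claim, and it is not established, nor obviously true, for general coprime $(d,w)$ (it holds for $(2,1)$, but you give no argument beyond that it ``should'' follow from a leading-term analysis). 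This is the genuine gap: the entire content of Theorem~\ref{thm:generatordv} beyond the formal reduction is concentrated in this one step, and your sketch of it does not work as stated. A specialization such as $z_i\mapsto t^i$ evaluates the symmetric Laurent polynomial; it does not isolate the coefficient of a monomial, so it cannot by itself exhibit a unit coefficient, and collapsing the symmetrization \eqref{elem:E} to ``a single dominant term'' is not something a generic substitution does.

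For comparison, the paper's proof of Theorem~\ref{thm:generatordv} shows why this step is delicate. After rewriting the shuffle expression as \eqref{sym:m2}, the specializations $z_i=q_1^i$ and $z_i=q_2^i$ (which do kill all but the identity permutation, because of the specific numerator $f_d$) only constrain the possible common non-unit divisors of $E=f_{d,w}$ to powers of $q_1q_2-1$ (already accounted for with multiplicity $d-1$) and of $q_1-q_2$. Excluding the factor $q_1-q_2$ is the hard point, and it is not accessible from the presentation \eqref{sym:m2} by another naive substitution: the paper passes to the second shuffle presentation $\mathcal{S}'$ using Negu\c{t}'s identity $P_{d,k}=E_{d,k}$ for $\gcd(d,k)=1$ and the isomorphism $\mathcal{S}_{\mathbb{F}}\cong\mathcal{S}'_{\mathbb{F}}$, rewrites the class as \eqref{sym:mm2}, and only then specializes $z_i=q^{-i}$ to rule out $q_1-q_2$. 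Nothing in your proposal addresses this factor, so as written the argument is incomplete precisely at its central step; to repair it you would either have to prove your stronger unit-coefficient claim directly (a nontrivial combinatorial statement about \eqref{elem:E} after division by $D$) or carry out a non-divisibility analysis of the above kind.
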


For $(d, w)\in \mathbb{N}\times \mathbb{Z}$ with $\gcd(d, w)=1$,
we believe the category $\mathbb{T}(d)_w$ is generated by $\mathcal{E}_{d,w}$, which in particular implies Theorem \ref{intro:thm2}. Further, for $\ast\in \{\emptyset, T\}$, we suspect there are equivalences $\mathbb{T}_\ast(1)_0\xrightarrow{\sim} \mathbb{T}_\ast(d)_w$, but we do not have much evidence supporting this belief.

\subsection{The coproduct}

For the reminder of the introduction, we consider a pair $(d,v)\in\mathbb{N}\times\mathbb{Z}$ with $\gcd(d, v)=1$. 
The Grothendieck group of the category $\mathbb{T}(nd)_{nv}$ for $n>1$ contains a contribution from partitions $a+b=n$ because the Hall product restricts to a functor
\[m_{a,b}\colon \mathbb{T}(ad)_{av}\otimes \mathbb{T}(bd)_{bd}\to \mathbb{T}(nd)_{nv}\] for $a, b\geq 1$ with $a+b=n$, see \cite[Lemma 4.8]{PT0}. The category:
\begin{equation}\label{ddv}
\mathcal{D}_{d, v}:=\bigoplus_{n\geq 0}\mathbb{T}(nd)_{nv}
\end{equation}
is thus monoidal.
For $n=a+b$, there is a coproduct 
\begin{equation}\label{def:introcoproduct}
\Delta_{a, b}\colon \mathbb{T}(nd)_{nv}\to \mathbb{T}(ad)_{av}\otimes \mathbb{T}(bd)_{bv},
\end{equation} 
 see also \cite[Section 5]{P}. 
 The construction also provides a $T$-equivariant version.
 In Section \ref{s4}, we prove that the Hall product is compatible with the above coproduct:

\begin{thm}\emph{(Corollary~\ref{cor:44})}
\label{intro:thm3}
    Let $(d, v)\in \mathbb{N}\times\mathbb{Z}$ be coprime,  let $a, b, c, e, n\in\mathbb{N}$ such that $a+b=c+e=n$, and let $S$ be the set of tuples $(f_1, f_2, f_3, f_4)$ such that $a=f_1+f_2$, $b=f_3+f_4$, $c=f_1+f_3$, $e=f_2+f_4$.
    The following diagram commutes:
    \begin{equation*}
    \begin{tikzcd}
    K_T(\mathbb{T}(ad)_{av})'\otimes K_T(\mathbb{T}(bd)_{bv})'\arrow[r, "m_{a, b}"]\arrow[d, "\Delta"]& K_T(\mathbb{T}(nd)_{nv})'\arrow[d, "\Delta_{c, e}"]\\
    \bigoplus_{S} \bigotimes_{i=1}^4 K_T(\mathbb{T}(f_id)_{f_iv})'\arrow[r, "m'"]& K_T(\mathbb{T}(cd)_{cv})\otimes K_T(\mathbb{T}(ed)_{ev})',
    \end{tikzcd}
\end{equation*} where we have used the shorthand notations $\Delta:=\bigoplus_S \Delta_{f_1, f_2}\otimes \Delta_{f_3, f_4}$ and $m':=\bigoplus_S (m\otimes m)(1\otimes \text{sw}_{f_2, f_3}\otimes 1)$.
\end{thm}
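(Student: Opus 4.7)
The plan is to realize both compositions around the square as $K$-theoretic shadows of operations on correspondences between the moduli stacks $\mathscr{C}(md)$, and to deduce the commutativity from a base change identity on a fiber product — the geometric incarnation of Green's theorem for this Hall algebra.

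Following \cite{PT0, P}, the Hall product $m_{a,b}$ is induced by pullback along the projection and pushforward along the forget-the-filtration map for the correspondence
\[ \mathscr{C}(ad) \times \mathscr{C}(bd) \xleftarrow{p_{a,b}} \mathcal{Z}_{a,b} \xrightarrow{q_{a,b}} \mathscr{C}(nd), \]
where $\mathcal{Z}_{a,b}$ parametrizes short exact sequences of zero-dimensional sheaves with sub of length $ad$ and quotient of length $bd$. The coproduct $\Delta_{c,e}$ is induced by the analogous $(cd,ed)$-correspondence traversed in the opposite direction (pullback along $q_{c,e}$, pushforward along $p_{c,e}$). To compare the two compositions, I would form the fiber product $\mathcal{W} := \mathcal{Z}_{a,b} \times_{\mathscr{C}(nd)} \mathcal{Z}_{c,e}$, parametrizing triples $(F, F' \subset F, G' \subset F)$ with lengths $nd, ad, cd$. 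Stratifying by the lengths of $F' \cap G'$, $F'/(F' \cap G')$, $G'/(F' \cap G')$ and $F/(F' + G')$ produces strata indexed precisely by $S$: the four relations in the definition of $S$ are the additivity of length along the short exact sequences relating these four quotients. On the stratum indexed by $(f_1, f_2, f_3, f_4)$, the stack $\mathcal{W}$ fibers over $\prod_{i=1}^4 \mathscr{C}(f_i d)$, and the pullback--pushforward across $\mathcal{W}$ factors, after the braiding $\mathrm{sw}_{f_2, f_3}$, as $(m \otimes m)(1 \otimes \mathrm{sw}_{f_2, f_3} \otimes 1)(\Delta_{f_1, f_2} \otimes \Delta_{f_3, f_4})$; summing over $S$ identifies the bottom path of the square.

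Proper base change in $T$-equivariant K-theory of matrix factorizations then equates the two compositions on the ambient $K$-theory, since the projections are representable and smooth while the forget maps are proper of bounded Tor dimension. The main remaining issue is the descent from the ambient stack to the quasi-BPS summands $\mathbb{T}(md)_{mv}$ and to the torsion-free quotients $(-)'$. For the former, I would check that each correspondence in the stratification sends the relevant quasi-BPS subcategories into each other using the filtration-compatibility results of \cite{PT0, P}; the coprimality $\gcd(d,v) = 1$ ensures that every tuple in $S$ lies on the common slope and hence in a compatible summand of the semiorthogonal decomposition \eqref{thm:1P}. For the latter, I expect any excess-intersection contributions from the derived structure on $\mathcal{W}$ to vanish after passing to $(-)'$, since any such correction should live in $\mathbb{K}$-torsion; this expectation can be verified inductively on $n$ using \eqref{thm:1P} together with Theorem~\ref{intro:thm2}, which is the hardest step of the argument.
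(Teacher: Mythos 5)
There is a genuine gap, and it starts with the definition of the coproduct. In this paper $\Delta_{c,e}$ is \emph{not} the Hall correspondence traversed backwards: pushing forward along $\mathscr{C}(cd,ed)\to\mathscr{C}(cd)\times\mathscr{C}(ed)$ is pushforward along a non-proper morphism (its fibers are stacks of extensions, affine spaces modulo unipotent groups), so it does not preserve bounded coherent complexes and does not induce a map on $K_T$ without completion or localization. The coproduct actually being used is $\widetilde{\Delta}_B=(q_\mu^*)^{-1}\beta_b\,p_\mu^*$ as in \eqref{def:coprod0} and \eqref{def:coproduct}: restriction to the attracting stack followed by the projection $\beta_b$ onto the extremal weight piece of a semiorthogonal decomposition, transported to $\mathbb{T}(nd)_{nv}$ through Koszul duality with the determinant twist $\omega_{AC}$ of \eqref{def:factordimred}. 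Your Green's-theorem argument (fiber product of the two flag correspondences, stratification by the four subquotient lengths, proper base change) therefore addresses a different operation from the one in the statement, and the base-change step you invoke is exactly where the non-properness bites.

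Even if one reformulates things so that both paths are correspondences, the fiber-product stratification alone does not produce the statement. The real content of the paper's proof (Theorem \ref{prodcoprod} via Proposition \ref{prop42}) is twofold: first, after applying the window projection only the terms indexed by $S$ survive integrally — this is the vanishing \eqref{reszero}, which rests on the polytope-face analysis of Proposition \ref{propboundary}, not on any filtration-compatibility statement from \cite{PT0,P}; second, the bottom row involves the \emph{twisted} swap $\widetilde{\mathrm{sw}}_C$ (a line bundle twist $\mathcal{O}(-N^{\varphi}_\lambda+\mathfrak{g}^{\varphi}_\lambda)$ and a shift), and the proof of Corollary \ref{cor:44} consists precisely of checking that these twists and the Koszul-duality factors $\omega$ cancel; your proposal supplies neither mechanism. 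Finally, the appeal to $\mathbb{K}$-torsion to absorb ``excess intersection contributions,'' with an induction on $n$ using Theorem \ref{intro:thm2}, is both unsupported and unnecessary: the paper proves the commutativity integrally on $K_T(\mathbb{M})$ and transfers it to $K_T(\mathbb{S})'\cong\mathrm{im}(\Theta)$ via the forget-the-potential map, with no input from the rank-one computation. (Only the localized statement admits the shortcut via Propositions \ref{commutative} and \ref{1236bis}; the torsion-free statement does not.)
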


In the proof, 
we first construct a version of $\Delta_{a, b}$ for 
the subcategories $\mathbb{M}(nd)_{nv}\subset D^b(\X(nd))_{nv}$, see (\ref{intro:Sdw}). We then construct the functor \eqref{def:introcoproduct} by applying matrix factorizations and using the Koszul 
equivalence. 

In Propositions \ref{commutative} and \ref{1236bis}, we show that the product and the coproduct on $K_T(\mathcal{D}_{d,v})'$ are commutative and cocommutative. We then obtain an isomorphism
\begin{equation}\label{isomac}
K_T(\mathcal{D}_{d, v})_\mathbb{F}\cong\Lambda_{\mathbb{F}},
\end{equation}
where $\Lambda_{\mathbb{F}}$ is the $\mathbb{F}$-algebra of symmetric functions, see Subsection \ref{subsection:symmetric} for its definition. The elementary functions $e_n\in \Lambda_{\mathbb{F}}$ are sent, up to a factor in $\mathbb{F}$, to $\mathcal{E}_{nd, nv}$. 

\subsection{K-theoretic BPS spaces}\label{ss5}

We define the K-theoretic BPS space to be the space of primitive elements of $K_T(\mathbb{T}(nd)_{nv})$ with respect to the above coproduct:
\begin{equation}\label{def:KBPS}
\mathrm{P}(nd)_{nv}:=\mathrm{ker}\left(\bigoplus_{\substack{a+b=n\\a, b\geq 1}}\Delta_{a, b}: K_T\left(\mathbb{T}(nd)_{nv}\right)\to \bigoplus_{\substack{a+b=n\\a, b\geq 1}}K_T\left(\mathbb{T}(ad)_{av}\otimes \mathbb{T}(bd)_{bv}\right)\right).
\end{equation}
We show that 
the dimension over $\mathbb{F}$ of localized K-theoretic BPS spaces for all pairs $(nd, nv)\in \mathbb{N}\times\mathbb{Z}$ is the same as the (numerical) BPS invariants \eqref{Omegad} up to a sign:
\begin{prop}\label{prop14}\emph{(Corollary~\ref{isolambda})}
The $\mathbb{F}$-vector space $\mathrm{P}(nd)_{nv, \mathbb{F}}$ is one dimensional.
\end{prop}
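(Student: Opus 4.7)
The plan is to deduce Proposition \ref{prop14} from the bialgebra isomorphism \eqref{isomac} combined with the classical description of primitive elements in the Hopf algebra of symmetric functions.

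The grading on $K_T(\mathcal{D}_{d,v})_{\mathbb{F}}$ by the summands $K_T(\mathbb{T}(nd)_{nv})_{\mathbb{F}}$ is preserved under \eqref{isomac} and corresponds to the usual grading of $\Lambda_{\mathbb{F}}$ by degree, since the Hall product adds lengths and the coproduct $\Delta_{a,b}$ splits degree $a+b$ into $(a,b)$. Under this identification, the kernel $\mathrm{P}(nd)_{nv,\mathbb{F}}$ defined in \eqref{def:KBPS} corresponds to the subspace of degree-$n$ elements $f \in \Lambda^n_{\mathbb{F}}$ satisfying $\Delta(f) = f \otimes 1 + 1 \otimes f$, i.e.\ the degree-$n$ primitive elements of $\Lambda_{\mathbb{F}}$.

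Since $\mathbb{F}$ has characteristic zero, $\Lambda_{\mathbb{F}}$ is a connected graded commutative cocommutative Hopf algebra, freely generated as an algebra by the power sums $p_n$ for $n \geq 1$, each of which is primitive of degree $n$. The degree-$n$ primitive subspace is then exactly $\mathbb{F}\cdot p_n$: writing a primitive $x \in \Lambda^n_{\mathbb{F}}$ as a polynomial in $p_1, p_2, \ldots$, the monomials of length $\geq 2$ must vanish (otherwise they contribute non-trivial mixed terms to $\Delta(x)$), leaving $x \in \mathbb{F}\cdot p_n$. This yields $\dim_{\mathbb{F}}\mathrm{P}(nd)_{nv,\mathbb{F}} = 1$, as desired.

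The main content underlying this short argument is the bialgebra isomorphism \eqref{isomac} itself; in particular, one needs compatibility of coproducts, not merely of products. That both coproducts are algebra maps is Theorem \ref{intro:thm3} on the K-theory side and classical on the $\Lambda_{\mathbb{F}}$ side. Combined with commutativity (Proposition \ref{commutative}), cocommutativity (Proposition \ref{1236bis}), and the matching of generators $[\mathcal{E}_{nd,nv}] \leftrightarrow c_n\cdot e_n$ provided by Theorem \ref{intro:thm2}, the matching of coproducts reduces to computing $\Delta_{a,b}([\mathcal{E}_{nd,nv}])$ and comparing with $\Delta(e_n) = \sum_{a+b = n} e_a \otimes e_b$ (up to the scalars $c_n$). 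This explicit coproduct computation on the generators $\mathcal{E}_{nd,nv}$ is the expected main technical hurdle; once completed, Proposition \ref{prop14} follows immediately from the two steps above.
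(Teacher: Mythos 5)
Your proposal is correct and follows essentially the same route as the paper: Corollary~\ref{isolambda} is obtained exactly by combining commutativity (Proposition~\ref{commutative}), the coproduct computation on the generators $\widehat{A}_{nd,nv}\sim[\mathcal{E}_{nd,nv}]$ (Proposition~\ref{1236bis}, which is the ``technical hurdle'' you defer), and the identification with $\Lambda_{\mathbb{F}}$, after which one-dimensionality of the degree-$n$ primitives is the classical statement that they are spanned by $p_n$, encoded in \eqref{PBWmacdonald}. The only small slip is citing Theorem~\ref{intro:thm2} for the matching of generators: what is actually needed is that the monomials in the classes $[\mathcal{E}_{n_id,n_iv}]$ form an $\mathbb{F}$-basis of the localized K-theory, i.e.\ \cite[Theorem~1.2]{PT0} together with \eqref{basis:S'} and \eqref{SS'}, not the integral rank-one statement for coprime $(d,w)$.
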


Using the above proposition and \cite[Theorem 1.1]{PT0}, 
we obtain a K-theoretic analogue of the wall-crossing formula \eqref{wallcrossingDT}, 
see Subsection \ref{subsection:primitive} and \cite[Subsection 1.6]{PT0} for more details.
\begin{cor}\label{intro:corDT}\emph{(Corollary~\ref{corDT})}
There is an isomorphism of $\mathbb{N}$-graded $\mathbb{F}$-vector spaces:
\begin{equation}\label{catwc}
\bigoplus_{d\geq 0}K_T(\mathcal{DT}(d))_{\mathbb{F}}\cong \bigotimes_{\substack{0\leq v<d \\ \gcd(d,v)=1}} \left(\bigotimes_{n\geq 1}\mathrm{Sym}\big(\mathrm{P}(nd)_{nv, \mathbb{F}}\big)\right),
\end{equation}
\end{cor}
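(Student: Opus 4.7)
The plan is to chain three ingredients from earlier in the paper: the semiorthogonal decomposition \eqref{thm:1PT0} of $\mathcal{DT}(d)$, the isomorphism \eqref{isomac} identifying $K_T(\mathcal{D}_{d,v})_{\mathbb{F}}$ with the algebra of symmetric functions, and the one-dimensionality of localized BPS spaces from Proposition~\ref{prop14}.

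First I would apply $K_T(-)$ to \eqref{thm:1PT0} and tensor with $\mathbb{F}$ over $\mathbb{K}$. Since a semiorthogonal decomposition induces a direct sum on Grothendieck groups and localization is exact, this gives
\[
K_T(\mathcal{DT}(d))_{\mathbb{F}} \;\cong\; \bigoplus \bigotimes_{i=1}^{k} K_T\!\left(\mathbb{S}(d_i)_{w_i}\right)_{\mathbb{F}},
\]
the sum running over tuples $(d_1,v_1),\dots,(d_k,v_k)$ with $0\leq v_1/d_1<\dots<v_k/d_k<1$, $\sum d_i=d$, and $w_i=v_i+d_i\bigl(\sum_{j<i}d_j-\sum_{j>i}d_j\bigr)$. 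Since $w_i\equiv v_i \pmod{d_i}$, tensoring with a power of $\det V$ (a $GL(V)$-character of weight $d_i$, under which $\Tr W_{d_i}$ is invariant) gives an equivalence $\mathbb{S}(d_i)_{w_i}\simeq \mathbb{S}(d_i)_{v_i}$. Combined with the isomorphism $K_T(\mathbb{S}(d)_w)\cong K_T(\mathbb{T}(d)_w)$ recalled in the introduction, each factor can be replaced by $K_T(\mathbb{T}(d_i)_{v_i})_{\mathbb{F}}$.

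Next, writing each slope $v_i/d_i$ uniquely in lowest terms as $v/d$ with $\gcd(d,v)=1$, the strict inequalities force each reduced fraction to occur for at most one index, with $(d_i,v_i)=(nd,nv)$ for some $n\geq 1$. Tuples as above with $\sum d_i$ ranging over all of $\mathbb{N}$ therefore correspond bijectively to finitely supported assignments $(d,v)\mapsto n_{d,v}\in\mathbb{N}_{\geq 0}$, indexed by coprime pairs with $0\leq v<d$. Summing over $d$ and regrouping turns the direct sum into a tensor product:
\[
\bigoplus_{d\geq 0}K_T(\mathcal{DT}(d))_{\mathbb{F}} \;\cong\; \bigotimes_{\substack{0\leq v<d \\ \gcd(d,v)=1}} \bigoplus_{n\geq 0}K_T\!\left(\mathbb{T}(nd)_{nv}\right)_{\mathbb{F}} \;=\; \bigotimes_{\substack{0\leq v<d \\ \gcd(d,v)=1}} K_T(\mathcal{D}_{d,v})_{\mathbb{F}}.
\]

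Finally, I would invoke \eqref{isomac} to identify each factor $K_T(\mathcal{D}_{d,v})_{\mathbb{F}}\cong\Lambda_{\mathbb{F}}$ as graded $\mathbb{F}$-bialgebras. Since $\mathbb{F}$ has characteristic zero and $\Lambda_{\mathbb{F}}$ is a connected, graded, commutative and cocommutative Hopf algebra, the Milnor--Moore theorem gives $\Lambda_{\mathbb{F}}\cong\mathrm{Sym}(\mathrm{Prim}\,\Lambda_{\mathbb{F}})$; by definition \eqref{def:KBPS}, the primitive part in internal degree $n$ corresponds under \eqref{isomac} to the K-theoretic BPS space $\mathrm{P}(nd)_{nv,\mathbb{F}}$. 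Combined with Proposition~\ref{prop14}, which asserts $\dim_{\mathbb{F}}\mathrm{P}(nd)_{nv,\mathbb{F}}=1$ for every $n\geq 1$, this yields \eqref{catwc}. The main technical point will be the bookkeeping of Step 2 together with the $\det$-twist identification in Step 1: once the indexing sets are carefully aligned and the invariance of $\Tr W_d$ under determinant twists is used, the remaining steps are purely formal consequences of results already established in the paper.
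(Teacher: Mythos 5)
Your proposal is correct and follows essentially the same route as the paper's proof of Corollary~\ref{corDT}: pass to $K_T$ of the semiorthogonal decomposition \eqref{thm:1PT0}, use the weight-shift equivalences $\mathbb{S}_T(d)_w\simeq\mathbb{S}_T(d)_{w+d}$ (the determinant twist), regroup the summands by reduced slope into $\bigotimes_{(d,v)}K_T(\mathcal{D}_{d,v})_{\mathbb{F}}$, and conclude via the bialgebra isomorphism with $\Lambda_{\mathbb{F}}$ and the one-dimensionality of the localized primitives. The only (inessential) difference is that you invoke Milnor--Moore for the connected graded bialgebra $\Lambda_{\mathbb{F}}$ where the paper simply quotes the explicit power-sum decomposition \eqref{PBWmacdonald}; these carry the same content.
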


We conjecture an integral version of Proposition \ref{prop14}, which is a version of Theorem \ref{intro:thm2} for pairs $(nd, nv)$ for all $n\in\mathbb{N}$:
\begin{conj}
The $\mathbb{K}$-module $\mathrm{P}(nd)_{nv}$ is free of rank one.
\end{conj}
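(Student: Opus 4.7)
The localized statement already pins down the rank: by Proposition~\ref{prop14} one has $\mathrm{P}(nd)_{nv}\otimes_{\mathbb{K}}\mathbb{F} \cong \mathbb{F}$, so the $\mathbb{K}$-rank of $\mathrm{P}(nd)_{nv}$ is at most one. The content of the conjecture is therefore the existence of an integral primitive generator together with the absence of $\mathbb{K}$-torsion in the kernel. The plan is to construct such a generator explicitly, using Newton's identity on the bialgebra side, and to control torsion via a structure theorem for the bialgebra $K_T(\mathcal{D}_{d,v})'$.

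The natural starting point is the bialgebra isomorphism~(\ref{isomac}), $K_T(\mathcal{D}_{d,v})_{\mathbb{F}} \cong \Lambda_{\mathbb{F}}$, under which the primitives in degree $n$ are spanned by the power sum $p_n$. Using Newton's identity, one writes $p_n$ as an integer polynomial in the elementary symmetric functions $e_1,\dots,e_n$ with leading term $\pm n\,e_n$. Transporting this through the isomorphism, which matches $e_k$ with a scalar multiple of $[\mathcal{E}_{kd, kv}]$, yields a candidate primitive element $\mathfrak{p}_n\in K_T(\mathcal{D}_{d,v})_{\mathbb{F}}$. I would then normalize carefully, exploiting the divisibility factor $(q_1-1)^{nd-1}(q_2-1)^{nd-1}(q_1q_2-1)^{nd-1}$ from Lemma~\ref{lemma:div}, and verify that $\mathfrak{p}_n$ lies in the unlocalized $K_T(\mathbb{T}(nd)_{nv})$. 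By construction it is primitive, and by Proposition~\ref{prop14} it spans $\mathrm{P}(nd)_{nv,\mathbb{F}}$, so it provides the required generator.

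For torsion-freeness, I would exploit that $K_T(\mathcal{D}_{d,v})'$ is a commutative, cocommutative, graded-connected $\mathbb{K}$-bialgebra (by Propositions~\ref{commutative} and~\ref{1236bis}), and aim at a Milnor--Moore-type structure result: namely, that $K_T(\mathcal{D}_{d,v})'$ is a polynomial $\mathbb{K}$-algebra on one primitive generator in each degree, set up inductively on $n$ using the conjecture for smaller $k$. Once this is established, the primitive subspace in degree $n$ is automatically a free $\mathbb{K}$-module of rank one and a direct summand. The image of $\mathrm{P}(nd)_{nv}$ in $K_T(\mathcal{D}_{d,v})'$ is contained in (and after localization equal to) this primitive subspace, so any $\mathbb{K}$-torsion in $\mathrm{P}(nd)_{nv}$ would have to lie in the torsion of $K_T(\mathbb{T}(nd)_{nv})$ and map to zero in the torsion-free quotient; the $\mathbb{F}$-freeness of Proposition~\ref{prop14} should rule this out.

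The main obstacle is the integrality step. The identification $e_k \leftrightarrow [\mathcal{E}_{kd,kv}]$ involves scalars in $\mathbb{F}$ coming from equivariant weight multiplicities and from the divisibility factors of Lemma~\ref{lemma:div}, and one must verify that the factor of $n$ produced by Newton's identity is absorbed by the integral structure rather than introduced as a denominator. Carrying this out seems to require a careful analysis of the shuffle description~\eqref{elem:E} of $\Theta([\mathcal{E}_{nd,nv}])$ and of the explicit coproduct coefficients underlying Theorem~\ref{intro:thm3}. A secondary difficulty is that $K_T(\mathbb{T}(nd)_{nv})$ is not yet known to be torsion-free for $n>1$, so the generating argument cannot simply be localized at the level of the ambient module; it must be carried out inside a potentially torsion-laden module, most naturally through the bialgebra structure theorem sketched above.
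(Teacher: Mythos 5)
You should first note that the paper does not prove this statement at all: it is stated as a conjecture, and the only evidence offered is the localized statement (Proposition~\ref{prop14}) together with the explicit rank-two computation $K_T(\mathbb{T}(2)_0)'=\mathbb{K}[M_1]\oplus\mathbb{K}[M_2]$ of Proposition~\ref{prop:20}, which settles only the torsion-free version for $(d,v)=(1,0)$, $n=2$. So there is no proof in the paper to compare against, and your sketch has to stand on its own; as written it does not close the conjecture, and the paper's own remarks explain precisely where your route breaks down.

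Concretely: (a) the integrality step is not a normalization issue but the heart of the problem. The isomorphism of Corollary~\ref{isolambda} sends $e_n$ to $\widehat{A}_{nd,nv}$, which the authors point out is \emph{not} in the integral part $K_T(\mathbb{T}(nd)_{nv})$, so Newton's identity transported through this map produces elements with genuine denominators; worse, Remark~\ref{rmk:Pdw} shows that already for $(d,v)=(1,0)$, $n=2$ the lattice spanned by the power-sum-type classes $P_{1,0}\ast P_{1,0}$ and $P_{2,0}$ has index two in $K_T(\mathbb{T}(2)_0)'$, i.e.\ the factor $n$ from Newton's identity is really there and is not absorbed — one needs genuinely new integral objects such as $M_1,M_2$, and the general construction is deferred to the sequel \cite{PTsym}. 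Note also that the divisibility of Lemma~\ref{lemma:div} is only available when the support lies over the small diagonal, which Theorem~\ref{lem:support} guarantees only for $\gcd(d,w)=1$; the paper shows $i_*[\mathcal{E}_{2,0}]$ is \emph{not} divisible by \eqref{divis}, so you cannot invoke that factor in degree $n>1$ (the support statement for primitives is itself the paper's second conjecture). (b) The ``Milnor--Moore-type'' structure theorem you want over $\mathbb{K}$ is not a known tool: $\mathbb{K}\supset\mathbb{Z}$ is not a $\mathbb{Q}$-algebra, primitives do not generate such bialgebras integrally (the $p_n$ do not generate $\Lambda_{\mathbb{Z}}$), and asserting that $K_T(\mathcal{D}_{d,v})'$ is polynomial on one primitive generator per degree is essentially an integral form of the conjecture itself, so the induction is circular. (c) The torsion argument fails: $\mathrm{P}(nd)_{nv}$ is a kernel inside $K_T(\mathbb{T}(nd)_{nv})$, whose $\mathbb{K}$-torsion-freeness is itself only conjectural (Schiffmann--Vasserot), and any torsion class killed by all $\Delta_{a,b}$ would lie in $\mathrm{P}(nd)_{nv}$ while becoming invisible after $\otimes_{\mathbb{K}}\mathbb{F}$; one-dimensionality of $\mathrm{P}(nd)_{nv,\mathbb{F}}$ gives no control over such classes, and passing to the torsion-free quotient changes the module whose primitives you are computing. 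In short, the rank statement after localization is the easy part, and each of the three ingredients you need — an integral primitive generator, an integral bialgebra structure theorem, and torsion control — is an open problem, not a consequence of the results proved in this paper.
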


The torsion-free version of the above conjecture for $(d,v)=(1,0)$ and $n=2$ follows from the discussion in Subsection \ref{subsection:integral20}. 
We finally conjecture an analogue of Theorem \ref{intro:thm1} for all $n\in\mathbb{N}$.

\begin{conj}
The subspace $\mathrm{P}(nd)_{nv}\subset K_T(\mathbb{T}(nd)_{nv})\cong K_T(\mathbb{S}(nd)_{nv})$ is supported over $\pi^{-1}(\Delta)$, alternatively, the following composition is zero:
\[\mathrm{P}(nd)_{nv}\to  K_T(\mathbb{S}(nd)_{nv})\to K_T(\X(nd)\setminus \pi^{-1}(\Delta)).\]
\end{conj}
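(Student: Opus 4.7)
The plan is to reduce to the coprime case of Theorem \ref{intro:thm1} by making the coproduct detect the support off the small diagonal. Via the good moduli space map $\pi$, the complement $\X(nd) \setminus \pi^{-1}(\Delta)$ is stratified according to how the underlying support in $\mathrm{Sym}^{nd}(\mathbb{C}^3)$ breaks into disjoint nonempty clusters. The open stratum of splitting type $(a, nd-a)$ is, \'etale-locally, of the form $\X(a) \times \X(nd-a)$ restricted to the disjoint-support locus, and the more refined strata have analogous product descriptions. The central technical claim is a compatibility between geometric restriction to such a stratum and the coproducts $\Delta_{a', b'} : \mathbb{T}(nd)_{nv} \to \mathbb{T}(a'd)_{a'v} \otimes \mathbb{T}(b'd)_{b'v}$ with $a' + b' = n$: the restriction should factor, after applying the semiorthogonal decompositions \eqref{thm:1PT0}--\eqref{thm:1P} on each geometric factor, through $\bigoplus_{a' + b' = n} \Delta_{a', b'}$.

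\textbf{Implementation.} Concretely, I would construct a hyperbolic restriction functor
\[
\mathbb{S}(nd)_{nv} \longrightarrow \bigoplus_{a' + b' = n} \mathbb{S}(a'd)_{a'v} \boxtimes \mathbb{S}(b'd)_{b'v},
\]
and identify it with the coproduct on the algebraic side and with the geometric restriction to the disjoint-support strata on the geometric side, invoking the $n=1$ support lemma on each factor to kill slope-incompatible contributions (i.e.\ geometric decompositions $nd = a + b$ which are not of the form $(a'd, b'd)$). Given $x \in \mathrm{P}(nd)_{nv}$, the vanishing of every $\Delta_{a', b'}(x)$ then forces the restriction of $x$ to each open stratum to vanish, and an induction on the stratification (equivalently, a Mayer--Vietoris or localization sequence argument) promotes this to the desired vanishing in $K_T(\X(nd) \setminus \pi^{-1}(\Delta))$.

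\textbf{Main obstacle.} The hard part is the restriction--coproduct compatibility. The coproduct is built from filtration/attractor stacks and passes through Koszul duality between $\mathbb{S}^{\mathrm{gr}}$ and $\mathbb{T}$ together with the matrix factorization formalism, whereas restriction to disjoint-support loci is purely geometric on $\X(nd)$; matching them requires identifying both with a common hyperbolic restriction functor, in the spirit of Davison's cohomological arguments \cite{Dav} but at the categorical/K-theoretic level. A further subtlety is handling the slope-incompatible strata mentioned above, where one must carefully use the $n=1$ result on each factor to ensure that only decompositions of the form $(a'd, b'd)$ contribute. Finally, the integral nature of the statement (as opposed to the $\otimes_{\mathbb{K}} \mathbb{F}$ version provided by Proposition \ref{prop14}) forces one to track $\mathbb{K}$-torsion throughout, which is likely the main source of difficulty beyond the conceptual picture.
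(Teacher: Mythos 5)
First, a point of calibration: the statement you are proving is stated in the paper as a conjecture (Conjecture in Subsection \ref{ss5}); the paper offers no proof of it, so there is no argument of the authors to compare yours against, and what you have written is a strategy outline rather than a proof. Within that outline, the part that is sound is the slope bookkeeping: by Theorem~\ref{lem:support} and $\gcd(d,v)=1$, any contribution of $\mathbb{S}(nd)_{nv}$ over a point of $\mathrm{Sym}^{nd}(\mathbb{C}^3)\setminus\Delta$ with cluster sizes $(a,b)$ forces equal slopes and hence $a=a'd$, $b=b'd$; this is exactly the paper's mechanism in the coprime case. The genuine gap is your central claim that restriction to a disjoint-support stratum (equivalently, to the formal fibers $\X_p(nd)$ used in the proof of Theorem~\ref{lem:support}) is computed by $\bigoplus_{a'+b'=n}\Delta_{a',b'}$. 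Nothing in the paper gives this: the coproduct \eqref{def:coprod0}--\eqref{def:coproduct} is defined via attracting stacks for dominant cocharacters, the weight truncation $\beta_b$, and the Koszul twist $\omega_{AC}$, whereas the local structure at $p\notin\Delta$ is governed by Kn\"orrer periodicity and the Hall-product semiorthogonal decomposition \eqref{sod:mf2}. Matching the two is a compatibility statement of at least the same order of difficulty as Theorem~\ref{prodcoprod}, and it is precisely the content of the conjecture; asserting the existence of a ``hyperbolic restriction functor'' identified with both sides does not discharge it. Without this step, primitivity of $x$ in the sense of \eqref{def:KBPS} gives no control on the image of $x$ in $K_T(\X(nd)\setminus\pi^{-1}(\Delta))$.

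There are two further problems even granting that compatibility. The stratification induction does not work as stated in K-theory: if $U$ is an open stratum with closed complement $Z$, the localization sequence only yields $x|_{W}=\iota_*y$ once $x|_U=0$, and restricting $\iota_*y$ back to the next stratum produces $y$ multiplied by a K-theoretic Euler class of the normal bundle, so stratum-wise vanishing of restrictions does not formally propagate to vanishing on the whole complement; one needs either object-level support statements (as in Theorem~\ref{lem:support}, where support is detected on formal fibers) or an argument handling these Euler-class factors, and for K-classes such as elements of $\mathrm{P}(nd)_{nv}$ neither is available from the paper. Finally, the conjecture is integral: the localized statement could plausibly be attacked using the $\mathbb{F}$-basis of monomials in the $[\mathcal{E}_{md,mv}]$ and Corollary~\ref{isolambda}, but over $\mathbb{K}$ one must control torsion (the paper only computes $K_T(\mathbb{T}(nd)_{nv})'$ for $n=1$ or $(d,v)=(2,0)$), and your proposal acknowledges this without offering a mechanism. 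So the proposal identifies the right heuristic picture (a categorical/K-theoretic analogue of Davison's support argument) but leaves all of its load-bearing steps unproven.
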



\subsection{Acknowledgements}
We thank Tasuki Kinjo, Davesh Maulik, Andrei Negu\c{t}, 
Raphaël Rouquier, Christopher Ryba,
Olivier Schiffmann, {\v S}pela {\v S}penko, Eric Vasserot, and Yu Zhao
for discussions related to this work. We thank the referee for useful suggestions.
	Y.~T.~is supported by World Premier International Research Center
	Initiative (WPI initiative), MEXT, Japan, and Grant-in Aid for Scientific
	Research grant (No.~19H01779) from MEXT, Japan.

\section{Preliminaries}\label{s2}

\subsection{Notations}\label{subsection:notation}
The spaces considered in this paper are defined 
over the complex field $\mathbb{C}$ and they are quotient stacks $\X=A/G$, where $A$ is a dg scheme, the derived zero locus of a section $s$ of a finite rank bundle vector bundle $\mathcal{E}$ on a finite type separated scheme $X$ over $\mathbb{C}$, and $G$ is a reductive group. For such a dg scheme $A$, let $\dim A:=\dim X-\text{rank}(\mathcal{E})$,
let $A^{\mathrm{cl}}:=Z(s)\subset X$ be the (classical) zero locus, and let $\X^{\mathrm{cl}}:=A^{\mathrm{cl}}/G$. We denote by $\mathcal{O}_{\X}$ or $\mathcal{O}_A$ the structure sheaf of $\X$. 

For $G$ a reductive group and $A$ a dg scheme as above, denote by $A/G$ the corresponding quotient stack and by $A\ssslash G$ the quotient dg scheme with dg-ring of regular functions $\mathcal{O}_A^G$.

For a classical stack $\X$ with a morphism $\X \to X$
to a scheme $X$ and for a closed point $p\in X$, we denote by 
$\X_p$
its formal fiber
$\X_p := \X \times_X \Spec \widehat{\mathcal{O}}_{X, p}$. 

For a (derived) stack $\mathcal{X}$ with an action of a torus $T$, we denote by $D^b_T(\mathcal{X})$
the bounded derived category of $T$-equivariant coherent sheaves on $\mathcal{X}$ and by $G_T(\X)$ its Grothendieck group. 
We denote by $\mathrm{Perf}_T(\mathcal{X})$ the subcategory of $T$-equivariant perfect complexes and by $K_T(\X)$ its Grothendieck group.
When $T$ is trivial, we drop $T$ from the notation of the above Grothendieck groups.
We introduce more notations for categories and K-theory in Subsection \ref{gradingMF}.

Consider the two dimensional torus
\begin{align}\label{torus:T}
(\mathbb{C}^{\ast})^2
\stackrel{\cong}{\to}
    T:=\{(t_1, t_2, t_3) \in (\mathbb{C}^{\ast})^{\times 3} \mid t_1 t_2 t_3=1\}\subset (\mathbb{C}^{\ast})^3. 
\end{align}
The isomorphism above is given by $(t_1, t_2) \mapsto (t_1, t_2, t_1^{-1} t_2^{-1})$. We denote by $\mathbb{K}:=K_T(\mathrm{pt})=\mathbb{Z}[q_1^{\pm 1}, q_2^{\pm 1}]$ and let $\mathbb{F}$ be the fraction field of $\mathbb{K}$. For $V$ a $\mathbb{K}$-module, we use the notations $V':=V/(\mathbb{K}\text{-torsion})$ and $V_{\mathbb{F}}:=V\otimes_{\mathbb{K}}\mathbb{F}$.

For a dg-category $\mathcal{D}$, a full dg-subcategory $\mathcal{C} \subset \mathcal{D}$
is called dense if any object in $\mathcal{D}$ is a direct summand of an object in $\mathcal{C}$. 

\subsection{Weights and partitions} 
\subsubsection{}\label{sss1}

For $d\in \mathbb{N}$, let $V$ be a $\mathbb{C}$-vector space of dimension $d$, and let $\mathfrak{g}=\mathfrak{gl}(V):=\text{End}(V)$. When the dimension is clear from the context, we drop $d$ from its notation.
Let
\[ \pi\colon \X(d):= R(d)/G(d):=\mathfrak{gl}(V)^{\oplus 3}/GL(V)\to X(d):=\mathfrak{gl}(V)^{\oplus 3}\ssslash GL(V).\]
Alternatively, $\X(d)$ is the stack of representations of dimension $d$ of the quiver $Q$ with one vertex and three loops $\{x, y, z\}$:
\begin{align}\notag
Q 
\begin{tikzpicture}
\draw[->] (0, 0) arc (-180:0:0.4) ;
\draw (0.8, 0) arc (0:180:0.4);
\draw[->] (0, 0) arc (-180:0:0.6) ;
\draw (1.2, 0) arc (0:180:0.6);
\draw[->] (0, 0) arc (-180:0:0.8);
\draw (1.6, 0) arc (0:180:0.8);
\draw[fill=black] (0, 0) circle (0.1);
\end{tikzpicture}
\end{align}
Consider the super-potential $W=x[y, z]$ of $Q$ and the regular function 
\begin{equation}\label{equation:NHilbTrW}
\Tr W= \Tr W_d:=\Tr A[B, C] \colon \X(d)\to \mathbb{C},
\end{equation}
where $(A, B, C)\in \mathfrak{gl}(V)^{\oplus 3}$.

 Fix the maximal torus 
 $T(d)\subset GL(d)$
 to be consisting of diagonal matrices. 
Denote by $M=\oplus_{i=1}^d \mathbb{Z} \beta_i$ the weight space of $T(d)$ and let $M(d)_{\mathbb{R}}:=M(d)\otimes_{\mathbb{Z}}\mathbb{R}$, where 
$\beta_1,\ldots, \beta_d$ is the set of simple roots.  
A weight $\chi=\sum_{i=1}^d c_i\beta_i$ is dominant 
if 
\begin{align*}
c_1\leq\ldots\leq c_d, \ 
\end{align*}
We denote by $M^+\subset M$ and $M^+_{\mathbb{R}}\subset M_{\mathbb{R}}$ the dominant chambers. When we want to emphasize the dimension vector, we write $M(d)$ etc. Denote by $N$ the coweight lattice of $T(d)$ and by $N_{\mathbb{R}}:=N\otimes_{\mathbb{Z}}\mathbb{R}$. Let $\langle\,,\,\rangle$ be the natural pairing between $N_{\mathbb{R}}$ and $M_{\mathbb{R}}$.

Let $W=\mathfrak{S}_d$ be the Weyl group of $GL(d)$. For $\chi\in M(d)^+$, let $\Gamma_{GL(d)}(\chi)$ be the irreducible representation of $GL(d)$ of highest weight $\chi$. We drop $GL(d)$ from the notation if the dimension vector $d$ is clear from the context. 
Let $w*\chi:=w(\chi+\rho)-\rho$ be the Weyl-shifted action of $w\in W$ on $\chi\in M(d)_\mathbb{R}$. We denote by $\ell(w)$ the length of $w\in W$.

Denote by $\mathcal{W}$ the multiset of $T(d)$-weights of $R(d)$. If there is a natural action of a torus $T$ on $R(d)$, we abuse notation and write $\mathcal{W}$ for the multiset of $(T\times T(d))$-weights of $R(d)$.
For $\lambda$ a cocharacter of $T(d)$, we denote by $N^{\lambda>0}$ the sum of weights $\beta$ in $\mathcal{W}$ such that $\langle \lambda, \beta\rangle>0$.

\subsubsection{}
We denote by
$\rho$ half the sum of positive roots of $GL(d)$. 
In our convention of the dominant chamber, 
it is given by 
\begin{align*}
    \rho=\frac{1}{2}\sum_{j<i}(\beta_i-\beta_j). 
\end{align*}
We denote by $1_d:=z\cdot\text{Id}$ the diagonal cocharacter of $T(d)$. 
Define the weights
\begin{align*}
    \sigma_d:=\sum_{j=1}^d\beta_j\in M,\
    \tau_d:=\frac{\sigma_d}{d}\in M_{\mathbb{R}}. 
\end{align*}


\subsubsection{}\label{ss13} 
Let $G$ be a reductive group (in this paper, $G$ will be a Levi subgroup of $GL(d)$ for some positive integer $d$), let $T_G$ be a maximal torus of $G$, 
let $X$ be a $G$-representation, and let
\[\X=X/G\] be the corresponding quotient stack. Let $\mathcal{W}$ be the multiset of $T_G$-weights of $X$.
For $\lambda$ a cocharacter of $T_G$, let $X^\lambda\subset X$ be the subspace generated by weights $\beta\in \mathcal{W}$ such that $\langle \lambda, \beta\rangle=0$, let $X^{\lambda\geq 0}\subset X$ be the subspace generated by weights $\beta\in \mathcal{W}$ such that $\langle \lambda, \beta\rangle\geq 0$, and let $G^\lambda$ and $G^{\lambda\geq 0}$ be the Levi and parabolic groups associated to $\lambda$.
Consider the fixed and attracting stacks
\begin{align*}
    \X^\lambda:=X^\lambda/ G^\lambda,\
    \X^{\lambda\geq 0}:=X^{\lambda\geq 0}/G^{\lambda\geq 0}
\end{align*}
with maps
\[\X^\lambda\xleftarrow{q_\lambda}\X^{\lambda\geq 0}\xrightarrow{p_\lambda}\X.\]
Define the integer 
\begin{equation}\label{def:nlambda}
n_\lambda:=\langle \lambda, [X^{\lambda\geq 0}]-[\mathfrak{g}^{\lambda\geq 0}]\rangle.
\end{equation}

\subsubsection{}\label{paco} 
Let $d\in \mathbb{N}$ and recall the definition of $\X(d)$ from Subsection \ref{sss1}.
For a cocharacter $\lambda:\C^*\to T(d)$, consider the maps of fixed and attracting loci
\begin{equation}\label{e}
\X(d)^\lambda \xleftarrow{q_\lambda}\X(d)^{\lambda\geq 0}\xrightarrow{p_\lambda}\X(d).\end{equation}
We say that two cocharacters $\lambda$ and $\lambda'$ are equivalent and write $\lambda\sim\lambda'$ if $\lambda$ and $\lambda'$ have the same fixed and attracting stacks as above. 

 We call $\dd:=(d_i)_{i=1}^k$ a partition of $d$ if $d_i\in\mathbb{N}$ are all non-zero and $\sum_{i=1}^k d_i=d$. 
 In Section \ref{s4}, we allow partitions $\dd=(d_i)_{i=1}^k$ to have terms $d_i$ equal to zero.
 We similarly define partitions of $(d,w)\in\mathbb{N}\times\mathbb{Z}$.
For a cocharacter $\lambda$ of $T(d)$, there is an associated partition $(d_i)_{i=1}^k$ such that
\[\X(d)^{\lambda\geq 0}\xrightarrow{q_\lambda} \X(d)^\lambda\cong\times_{i=1}^k\X(d_i).\]
 Define the length $\ell(\lambda):=k$. 

Equivalence classes of antidominant cocharacters are in bijection with ordered partitions $(d_i)_{i=1}^k$ of $d$.
For an ordered partition $\dd=(d_i)_{i=1}^k$ of $d$, fix a corresponding antidominant cocharacter $\lambda=\lambda_{\dd}$ of $T(d)$ which induces the maps
\[\X(d)^\lambda\cong\times_{i=1}^k\X(d_i)
\xleftarrow{q_\lambda}\X(d)^{\lambda\geq 0}
\xrightarrow{p_\lambda}\X(d).\] 
We also use the notations $p_\lambda=p_{\dd}$, $q_\lambda=q_{\dd}$.
The categorical Hall algebra is given 
by the functor 
$p_{\lambda*}q_\lambda^*=p_{\dd*}q_{\dd}^*$:
\begin{align}\label{prel:hall}
    m=m_{\dd}
     \colon D^b(\mathcal{X}(d_1)) \boxtimes
     \cdots \boxtimes D^b(\mathcal{X}(d_k))
     \to D^b(\mathcal{X}(d)). 
\end{align}
We may drop the subscript $\lambda$ or $\dd$ in the functors $p_*$ and $q^*$ when the cocharacter $\lambda$ or the partition $\dd$ is clear. We also use the notation $\ast$ for the Hall product.

\subsubsection{}\label{id} Let $(d_i)_{i=1}^k$ be a partition of $d$. There is an identification \[\bigoplus_{i=1}^k M(d_i)\cong M(d),\] where the simple roots $\beta_j$ in $M(d_1)$ correspond to the first $d_1$ simple roots $\beta_j$ of $d$ etc.

\subsubsection{}\label{compa}
Let $\underline{e}=(e_i)_{i=1}^l$ and $\dd=(d_i)_{i=1}^k$ be two partitions of $d\in \mathbb{N}$. We write $\ee\geq\dd$
if there exist integers \[a_0=0< a_1<\cdots<a_{k-1}\leq a_k=l\] such that for any $0\leq j\leq k-1$, we have
\[\sum_{i=a_{j}+1}^{a_{j+1}} e_i=d_{j+1}.\]
We say $\underline{e}$ is a refinement of $\dd$.
There is a similarly defined order on pairs $(d, w)\in\mathbb{N}\times\mathbb{Z}$.

\subsubsection{}\label{prime} 

Let $A$ be a partition $(d_i, w_i)_{i=1}^k$ of $(d, w)$ and consider its corresponding antidominant cocharacter $\lambda$. Define the weights
\begin{align*}
    \chi_A:=\sum_{i=1}^k w_i\tau_{d_i},\
    \chi'_A:=\chi_A+\mathfrak{g}^{\lambda>0}.
\end{align*}
Consider weights $\chi'_i\in M(d_i)_{\mathbb{R}}$ such that
\[\chi'_A=\sum_{i=1}^k \chi'_i.\]
 Let $v_i$ be the sum of coefficients of $\chi'_i$ for $1\leq i\leq k$; alternatively, $v_i:=\langle 1_{d_i}, \chi'_i\rangle$. We denote the above transformation by
  \begin{align}\label{trans:A}
     A\mapsto A', \ 
     (d_i, w_i)_{i=1}^k\mapsto (d_i, v_i)_{i=1}^k.
 \end{align}
Explicitly, the weights $v_i$ for $1\leq i\leq k$ are given by 
\begin{align}\label{w:prime}
    v_i=w_i+d_i\left(\sum_{j>i}d_j -\sum_{j<i}d_j \right). 
\end{align}




\subsection{Polytopes}\label{ss1}
The polytope $\textbf{W}(d)$ is defined as
\begin{equation}\label{W}
    \textbf{W}(d):=\frac{3}{2}\text{sum}[0, \beta_i-\beta_j]+\mathbb{R}\tau_d\subset M(d)_{\mathbb{R}},
    \end{equation}
where the Minkowski sum is after all $1\leq i, j\leq d$. For $w\in\mathbb{Z}$,
consider the hyperplane: 
\begin{equation}\label{W0}
    \textbf{W}(d)_w:=\frac{3}{2}\text{sum}[0, \beta_i-\beta_j]+w\tau_d\subset \textbf{W}(d).
    \end{equation}
    For $r>0$ and $\lambda$ a cocharacter of $T(d)$, let $F_r(\lambda)$ be the face of the polytope $2r\textbf{W}(d)$ corresponding to the cocharacter $\lambda$, so the set of weights $\chi$ in $M(d)_\mathbb{R}$ such that 
    \begin{align*}
        \chi\in 2r\textbf{W}(d), \
        \langle \lambda, \chi\rangle=r\langle \lambda, R(d)^{\lambda>0}\rangle.
    \end{align*}
    When $r=\frac{1}{2}$, we use the notations $F(\lambda)$. 
    For $\chi\in M(d)^+_{\mathbb{R}}$, its $r$-invariant $r(\chi)$ is the smallest real number $r$ such that
    \[\chi\in 2r\textbf{W}(d).
    \] 
For a cocharacter $\lambda$ of $T(d)$, denote by 
\[\textbf{W}(\lambda)_0:=\frac{3}{2}\text{sum}[0,\beta_i-\beta_j]\subset M(d)_{\mathbb{R}},\] where the sum is after all weights $1\leq i, j\leq d$ such that $\langle \lambda, \beta_i-\beta_j\rangle=0$.

\subsection{A corollary of the Borel-Weyl-Bott theorem}

For future reference, we state a result from \cite[Section 3.2]{hls}. We continue with the notations from Subsection \ref{ss13}. 
Let $M$ be the weight lattice of $T_G$.
We assume that $X$ is a symmetric $G$-representation, meaning that for any weight $\beta$ of $X$, the weights $\beta$ and $-\beta$ appear with the same multiplicity in $X$. 
Let $\chi$ be a weight in $M$. Let $\chi^+$ be the dominant Weyl-shifted conjugate of $\chi$ if it exists, and zero otherwise.
For a multiset $J\subset \mathcal{W}$, let
\[\sigma_J:=\sum_{\beta\in J}\beta.\]
Let
$w$ be the element of the Weyl group of minimal length such that $w*(\chi-\sigma_J)$ is dominant or zero. We let $\ell(J):=\ell(w)$.

\begin{prop}\label{bbw}
Let $X$ be a symmetric $G$-representation, and
let $\lambda$ be an antidominant cocharacter of $T_G$. 
Recall the fixed and attracting stacks and the corresponding maps
\[X^\lambda/G^\lambda\xleftarrow{q_\lambda}X^{\lambda\geq 0}/G^{\lambda\geq 0}\xrightarrow{p_\lambda}X/G.\]
For a weight $\chi$ in $M$, there 
is a quasi-isomorphism
\[\left(\bigoplus_{J}\mathcal{O}_{X}\otimes \Gamma_{G}\left((\chi-\sigma_J)^+\right)\left[|J|-\ell(J)\right], d\right)\xrightarrow{\sim}p_{\lambda*}q_{\lambda}^*\left(\mathcal{O}_{X^\lambda}\otimes\Gamma_{G^\lambda}(\chi)\right),\] where the complex on the left hand side has terms (shifted) vector bundles for all multisets $J\subset \{\beta\in\mathcal{W} \mid \langle \lambda, \beta\rangle<0\}$. 
\end{prop}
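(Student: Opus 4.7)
The plan is to factor the attracting map as $p_\lambda = \pi\circ i$, where $i\colon X^{\lambda\geq 0}/G^{\lambda\geq 0}\hookrightarrow X/G^{\lambda\geq 0}$ is the closed embedding induced by $X^{\lambda\geq 0}\subset X$, and $\pi\colon X/G^{\lambda\geq 0}\to X/G$ is the partial flag bundle with fiber $G/G^{\lambda\geq 0}$, and then to compute $i_*$ via a Koszul resolution and $R\pi_*$ via Borel--Weil--Bott. Since $q_\lambda$ presents $X^{\lambda\geq 0}/G^{\lambda\geq 0}$ as an affine bundle over $X^\lambda/G^\lambda$, I first observe that $q_\lambda^*(\mathcal{O}_{X^\lambda}\otimes\Gamma_{G^\lambda}(\chi))=\mathcal{O}_{X^{\lambda\geq 0}}\otimes \Gamma_{G^\lambda}(\chi)$, where $\Gamma_{G^\lambda}(\chi)$ is regarded as a $G^{\lambda\geq 0}$-representation by letting the unipotent radical act trivially.

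Next, since $X^{\lambda\geq 0}\subset X$ is cut out by a regular sequence whose normal bundle identifies $G^{\lambda\geq 0}$-equivariantly with $X^{\lambda<0}$ (the unipotent radical acts trivially on the quotient $X/X^{\lambda\geq 0}$), the Koszul resolution gives
\[
i_*\mathcal{O}_{X^{\lambda\geq 0}} \simeq \bigl(\mathcal{O}_X\otimes \Lambda^{\bullet}((X^{\lambda<0})^\vee),\, d_{\mathrm{Kos}}\bigr),
\]
with $\Lambda^k$ placed in cohomological degree $-k$. The $T_G$-weight decomposition of $\Lambda^k((X^{\lambda<0})^\vee)$ is indexed by $k$-multisubsets $J\subset \{\beta\in \mathcal{W}:\langle\lambda,\beta\rangle<0\}$, with each summand of $T_G$-weight $-\sigma_J$. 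I then compute $R\pi_*$ by further factoring $\pi$ through the full flag $X/B\to X/G$, where $B\subset G^{\lambda\geq 0}$ is a Borel of $G$; the identity $Rp_*\mathcal{O}_{X/B}=\mathcal{O}_{X/G^{\lambda\geq 0}}$ for $p\colon X/B\to X/G^{\lambda\geq 0}$ lets me replace $R\pi_*$ by $R(\pi\circ p)_*$ applied to the pullback. Classical Borel--Weil--Bott then applies to each $T_G$-weight line bundle $\mathcal{L}_{\chi-\sigma_J}$: the result is zero when $\chi-\sigma_J$ lies on a Weyl chamber wall, and otherwise equals $\mathcal{O}_X\otimes \Gamma_G((\chi-\sigma_J)^+)$ in cohomological degree $\ell(w)=\ell(J)$, where $w$ is the minimal-length Weyl element with $w*(\chi-\sigma_J)=(\chi-\sigma_J)^+$. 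Combining the Koszul shift $-|J|$ with the BWB shift $\ell(J)$ yields total cohomological degree $\ell(J)-|J|$, equivalently the homological shift $[|J|-\ell(J)]$ of the statement, and the differential on the direct sum is inherited from the Koszul differential.

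The main technical obstacle is verifying that the $T_G$-weight summands assemble correctly under $R\pi_*$: the $B$-representation $\Gamma_{G^\lambda}(\chi)\otimes \Lambda^k((X^{\lambda<0})^\vee)$ does not split as a direct sum of $T_G$-characters as a $B$-rep, so BWB must be applied via the standard filtration / Weyl-character-formula argument on each $G^\lambda$-isotypic component, and one must check that the resulting contributions from different weights $\chi-\sigma_J$ glue together through the Koszul differential rather than producing extra cross-terms. The symmetry hypothesis on $X$ enters only in guaranteeing $(X^{\lambda<0})^\vee\simeq X^{\lambda>0}$ as $T_G$-representations, a convenient identification for later applications of the proposition.
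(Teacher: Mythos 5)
The overall route you propose (factor $p_\lambda$ through the closed immersion into $X/G^{\lambda\geq 0}$, Koszul-resolve the attracting locus, then push forward along the $G/G^{\lambda\geq 0}$-fibration via Borel--Weil--Bott) is the standard one; note the paper does not reprove this statement but quotes it from \cite[Section~3.2]{hls}, where this is the strategy. However, your execution has a genuine gap exactly at the point you flag as the ``main technical obstacle'', and it is not a routine verification. After your reduction to the Borel, i.e.\ after replacing $R\pi_*$ of the associated bundle of the $G^{\lambda\geq 0}$-representation $V=\Gamma_{G^\lambda}(\chi)\otimes\Lambda^k\bigl((X/X^{\lambda\geq 0})^{\vee}\bigr)$ by the induction of $\mathrm{res}_B V$, the weight filtration of $\mathrm{res}_B V$ has graded pieces $\mathbb{C}_{\nu-\sigma_J}$ for \emph{every} weight $\nu$ of the Levi irreducible $\Gamma_{G^\lambda}(\chi)$, not only $\nu=\chi$. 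Borel--Weil--Bott applied to these lines produces terms $\Gamma_G\bigl((\nu-\sigma_J)^+\bigr)$ with $\nu\neq\chi$ which do not occur in the asserted complex; they are genuinely nonzero on the associated graded and disappear only through nontrivial connecting maps (already for $J=\emptyset$, the identity $R\Gamma\bigl(G/B,\,\mathcal{L}\otimes_{\mathbb{C}}\mathrm{res}_B\Gamma_{G^\lambda}(\chi)\bigr)\cong\Gamma_G(\chi)$ holds only because the contributions of the non-extremal weights cancel in pairs, as one sees for the adjoint representation of $SL_2$, where $H^0(\mathbb{P}^1,\mathcal{O})$ must cancel against $H^1(\mathbb{P}^1,\mathcal{O}(-2))$). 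So ``applying classical BWB to each weight line $\mathcal{L}_{\chi-\sigma_J}$'' is not available after your reduction, and showing that the unwanted pieces assemble away is precisely the content of the proposition, not a check one can defer. A second, smaller error: the unipotent radical of $G^{\lambda\geq 0}$ does not in general act trivially on $X/X^{\lambda\geq 0}$, so the conormal bundle is only $T_G$-equivariantly identified with $(X^{\lambda<0})^{\vee}$; this does not affect the existence of the equivariant Koszul resolution, but the parenthetical justification is wrong.

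The way to repair the argument is to perform the two projection-formula steps in the opposite order. First write $\mathcal{O}_{X^{\lambda\geq 0}}\otimes\Gamma_{G^\lambda}(\chi)\cong Rr_*\mathcal{L}_\chi$, where $r\colon X^{\lambda\geq 0}/B\to X^{\lambda\geq 0}/G^{\lambda\geq 0}$ is the fibration with fibre $G^\lambda/B^{\lambda}$ and $\mathcal{L}_\chi$ is the line bundle of weight $\chi$ (Borel--Weil along the fibres, using that $\chi$ is $G^\lambda$-dominant). Then Koszul-resolve the closed immersion $X^{\lambda\geq 0}/B\hookrightarrow X/B$ and push forward to $X/G$: the relevant $B$-representation is now $\mathbb{C}_\chi\otimes\Lambda^k\bigl((X/X^{\lambda\geq 0})^{\vee}\bigr)$, whose weight filtration has graded pieces exactly $\mathbb{C}_{\chi-\sigma_J}$ with $|J|=k$, so Borel--Weil--Bott produces precisely the terms $\mathcal{O}_X\otimes\Gamma_G\bigl((\chi-\sigma_J)^+\bigr)[|J|-\ell(J)]$ of the statement, and since each such piece is a shifted vector bundle concentrated in a single degree, the resulting convolution can be rewritten as a complex with exactly those terms. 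Your shift bookkeeping $[|J|-\ell(J)]$ and the remark that the symmetry of $X$ is not really used in this statement are both correct.
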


\subsection{Matrix factorizations}\label{MF}
In the notations from Subsection \ref{sss1}, 
we denote by 
\begin{align*}
    \mathrm{MF}(\X(d), \Tr W_d)
    \end{align*}
the dg-category of matrix factorizations of the regular function $\Tr W_d$
on the smooth stack $\X(d)$. 
Its objects consist of tuples
\begin{align*}
   \left(\alpha \colon F\rightleftarrows G\colon \beta\right)\text{ such that} \ 
   \alpha \circ \beta=\beta \circ \alpha=\cdot \Tr W_d, \ 
\end{align*}
where $F, G \in \Coh(\X(d))$, see \cite[Subsection 2.6]{PT0} for details. 
For an object $\mathcal{F} \in \mathrm{MF}(\X(d), \Tr W_d)$, its 
internal homomorphism $R\mathcal{H}om (\mathcal{F}, \mathcal{F})$
is an object of the $\mathbb{Z}/2$-graded derived category of $\Coh(\X(d))$. 
The support of $\mathcal{F}$
\begin{align*}
    \mathrm{Supp}(\F) \subset \mathcal{X}(d)
\end{align*}
is defined to be the support of $R\mathcal{H}om(\mathcal{F}, \mathcal{F})$, which is a 
closed substack of $\X(d)$. 
Alternatively, it is the smallest closed substack 
$\mathcal{Z} \subset \X(d)$
such that $\mathcal{F}|_{\mathcal{X}(d) \setminus \mathcal{Z}} \cong 0$
in $\mathrm{MF}(\X(d) \setminus \mathcal{Z}, \Tr W_d)$. 

Similarly to (\ref{prel:hall}), for $d=d_1+d_2$ 
we have the categorical Hall product 
\begin{align*}
    m=m_{d_1,d_2}\colon  \mathrm{MF}(\X(d_1), \Tr W_{d_1}) \boxtimes  \mathrm{MF}(\X(d_2), \Tr W_{d_2})
    \to  \mathrm{MF}(\X(d), \Tr W_d),
\end{align*}
see~\cite{P0} for details.
We sometimes write $a\ast b$ instead of $m(a, b)$. 

We also consider equivariant and graded matrix factorizations for the regular function \eqref{equation:NHilbTrW}, 
see~\cite[Subsection~2.6.2]{PT0} for details. The group $(\mathbb{C}^{\ast})^3$
acts on the linear maps corresponding to the edges $(x, y, z)$ 
of the quiver $Q$ by scalar multiplication. 
Consider the two dimensional subtorus 
\[T\cong (\mathbb{C}^{\ast})^2  \subset (\mathbb{C}^{\ast})^3\]
which preserves the super-potential $W=x[y, z]$, see \eqref{torus:T}.
Then $T$ acts on $\mathcal{X}(d)$
and preserves $\Tr W$. 
We will also consider graded matrix factorizations, where the 
grading is given by scaling with weight $2$ the space $\mathfrak{gl}(V)$ for $V$ a vector space. For example, we can choose an edge $e\in \{x, y, z\}$ of $Q$ and let $\mathbb{C}^*$ scale with weight $2$ the linear map corresponding to $e$. 
Contrary to the $T$-action, the regular function 
$\Tr W$ has weight $2$ with respect to such a grading. 
The corresponding categories of matrix factorizations are denoted by 
\begin{align*}
    \mathrm{MF}_{\ast}^{\bullet}(\X(d), \Tr W_d)\text{ for} \ 
    \ast \in \{\emptyset, T\}, \ \bullet \in \{\emptyset, \rm{gr}\}. 
\end{align*}

\subsection{Quasi-BPS categories}
\label{subsection:quasibps}
\subsubsection{}
For $w \in \mathbb{Z}$, 
we denote by $D^b(\mathcal{X}(d))_w$
the subcategory of $D^b(\mathcal{X}(d))$
consisting of objects of
weight $w$ with respect to the diagonal 
cocharacter $1_d$ of $T(d)$.  
We have the direct sum decomposition 
\begin{align*}
    D^b(\mathcal{X}(d))=\bigoplus_{w\in \mathbb{Z}}
    D^b(\mathcal{X}(d))_w. 
\end{align*}
We define the dg subcategories 
\begin{align*}
    \mathbb{M}(d) \subset D^b(\X(d)), \ 
    (\mbox{resp. }
    \mathbb{M}(d)_w \subset D^b(\X(d))_w)
\end{align*}
to be generated 
by the vector bundles $\OO_{\X(d)}\otimes \Gamma_{GL(d)}(\chi)$, where $\chi$ is a dominant weight of $T(d)$ such that
\begin{equation}\label{M}
    \chi+\rho\in \textbf{W}(d), \ 
    (\mbox{resp. } \chi+\rho \in \textbf{W}(d)_w). 
    \end{equation}
    Note that $\mathbb{M}(d)$
    decomposes into the direct sum of $\mathbb{M}(d)_w$
    for $w \in \mathbb{Z}$. 
    The following is an alternative description of the category $\mathbb{M}(d)_w$:
    \begin{lemma}\label{lemma:HLS}\emph{(\cite[Lemma~2.9]{hls})}
        The category $\mathbb{M}(d)_w$ 
is generated by the vector bundles $\OO_{\X(d)} \otimes \Gamma$
for $\Gamma$ a $GL(d)$-representation such that 
the $T(d)$-weights of $\Gamma$ are contained in the set
$\nabla_w$ defined by:
\begin{align*}
 \nabla_{w} &:=
    \left\{\chi \in M_{\mathbb{R}} \relmiddle| -\frac{1}{2}
    n_{\lambda} \leq \langle \lambda, \chi \rangle 
    \leq \frac{1}{2}n_{\lambda} \mbox{ for all } \lambda \colon \mathbb{C}^{\ast} \to T(d) \right\} +w\tau_d, \end{align*}
    \end{lemma}
For a partition $A=(d_i, w_i)_{i=1}^k$
of $(d, w)$, define 
\begin{align}\label{def:MA}
    \mathbb{M}_A := \boxtimes_{i=1}^k \mathbb{M}(d_i)_{w_i}. 
\end{align}

\subsubsection{}\label{gradingMF}
Recall the regular function (\ref{equation:NHilbTrW}). 
We define the subcategory
\[\mathbb{S}(d):=\text{MF}(\mathbb{M}(d), \Tr W_d)
\subset \mathrm{MF}(\mathcal{X}(d), \Tr W_d)
\] 
to be the subcategory of matrix factorizations
$\left(\alpha \colon F\rightleftarrows G\colon \beta\right)$ with $F$ and $G$ in $\mathbb{M}(d)$.
It decomposes into the direct sum of 
$\mathbb{S}(d)_w$ for $w \in \mathbb{Z}$, 
where $\mathbb{S}(d)_w$
is defined similarly to $\mathbb{S}(d)$
using $\mathbb{M}(d)_w$. 

We also consider subcategories 
for $\ast \in \{\emptyset, T\}$, 
$\bullet \in \{\emptyset, \text{gr}\}$
defined in a similar way 
\[\mathbb{S}^{\bullet}_{\ast}(d):=
\text{MF}^{\bullet}_{\ast}(\mathbb{M}(d), \Tr W_d)
\subset \text{MF}^{\bullet}_{\ast}(\X(d), \Tr W_d). 
\] 
The subcategory $\mathbb{S}^{\bullet}_{\ast}(d)_w$
is also defined in a similar way.
For a partition $A=(d_i, w_i)_{i=1}^k$
of $(d, w)$, 
the category $\mathbb{S}_{\ast, A}^{\bullet}$
is also defined similarly to (\ref{def:MA}). 
We denote the Grothendieck group of  $\mathbb{S}^{\bullet}_{\ast}(d)_w$ by
\begin{align*}
K_{\ast}(\mathbb{S}^{\bullet}(d)_w), \ 
\ast \in \{\emptyset, T\}, \ \bullet \in \{\emptyset, \mathrm{gr}\}. 
	\end{align*}
	By~\cite[Corollary~3.13]{T4}, 
	there are natural isomorphisms
	(which hold for all graded matrix factorizations as in Subsection \ref{MF}): 
\begin{align}\label{isom:DTgrade}
K(\mathbb{S}^{\mathrm{gr}}(d)_w) \stackrel{\cong}{\to} K(\mathbb{S}(d)_w), \ 
K_T(\mathbb{S}^{\mathrm{gr}}(d)_w) \stackrel{\cong}{\to} K_T(\mathbb{S}(d)_w). 
\end{align}

\subsection{Complexes in quasi-BPS categories}

Let $V$ be a $d$-dimensional complex vector space 
and recall that we denote by $\mathfrak{g}=\Hom(V, V)$ the Lie algebra of $GL(V)$. 
We set 
\begin{align*}
	\mathcal{Y}(d):=\mathfrak{g}^{\oplus 2}/GL(V),
	\end{align*}
where $GL(V)$ acts on $\mathfrak{g}$ by conjugation. 
The stack $\mathcal{Y}(d)$ is the moduli stack of 
representations of dimension $d$ of the quiver with one vertex and two loops. 
Let $s$ be the morphism 
\begin{align}\label{mor:s}
s \colon \mathcal{Y}(d) \to \mathfrak{g}, \ (X, Y) \mapsto [X, Y].
\end{align}
The morphism $s$ induces a map of vector bundles $\partial: \mathfrak{g}^{\vee}\otimes\mathcal{O}_{\mathfrak{g}^{\oplus 2}}\to \mathcal{O}_{\mathfrak{g}^{\oplus 2}}$.
Let $s^{-1}(0)$ be the derived scheme with the dg-ring of regular functions
\begin{align}\label{diff:ds2}
\mathcal{O}_{s^{-1}(0)}:=\mathcal{O}_{\mathfrak{g}^{\oplus 2}}\left[\mathfrak{g}^{\vee}\otimes\mathcal{O}_{\mathfrak{g}^{\oplus 2}}[1]; d_s\right],
\end{align}
where the differential $d_s$ is induced by the map $\partial$. Consider the (derived) stack 
\begin{equation}\label{def:cd}
\mathscr{C}(d):=s^{-1}(0)/GL(V) \hookrightarrow \mathcal{Y}(d).
\end{equation}
For a smooth variety $X$, we denote by $\mathscr{C}oh(X, d)$
the derived moduli stack of zero-dimensional sheaves on $X$
with length $d$
and by $\mathcal{C}oh(X, d)$ the classical truncation of $\mathscr{C}oh(X, d)$. 
Then 
$\mathscr{C}(d)$
is equivalent to $\mathscr{C}oh(\mathbb{C}^2, d)$. 
 
For a decomposition $d=d_1+\cdots+d_k$, 
let $\mathscr{C}(d_1, \ldots, d_k)$ be the derived moduli stack 
of filtrations of coherent sheaves on $\mathbb{C}^2$:
\begin{align}\label{filt:Q}
0=Q_0 \subset	Q_1 \subset Q_2 \subset \cdots \subset Q_k
	\end{align}
such that each subquotient $Q_i/Q_{i-1}$ is a zero-dimensional 
sheaf on $\mathbb{C}^2$ with length $d_i$. 
There exist evaluation morphisms 
\begin{align*}
	\mathscr{C}(d_1) \times \cdots \times \mathscr{C}(d_k) \stackrel{q}{\leftarrow} 
	\mathscr{C}(d_1, \ldots, d_k) \stackrel{p}{\to} \mathscr{C}(d),
	\end{align*}
where $p$ is proper and $q$ is quasi-smooth. 
The above diagram for $k=2$
defines the categorical Hall product
\begin{align}\label{hall:ast}
	m=m_{d_1,d_2}=p_{\ast}q^{\ast} \colon 
	D^b(\mathscr{C}(d_1)) \boxtimes D^b(\mathscr{C}(d_2)) \to 
	D^b(\mathscr{C}(d)),
	\end{align}
which is a special case of the product of categorical Hall algebras for surfaces defined by Porta--Sala~\cite{PoSa}. 
	
Let $T$ be the two-dimensional torus in (\ref{torus:T})
which acts on $\mathbb{C}^2$ 
by $(t_1, t_2) \cdot (x, y)=(t_1 x, t_2 y)$. 
It naturally induces an action on $\mathscr{C}(d)$. 
There is also a $T$-equivariant Hall product:
\begin{align}\label{hall:ast2}
	m=m_{d_1,d_2}=p_{\ast}q^{\ast} \colon 
	D^b_T(\mathscr{C}(d_1)) \boxtimes D^b_T(\mathscr{C}(d_2)) \to 
	D^b_T(\mathscr{C}(d)).
	\end{align}
Here, the box product is taken over $BT$. 
In what follows, whenever we take a box-product in the $T$-equivariant setting, 
we take it over $BT$. We also use the notation $\ast$ for the Hall product.

\subsection{Subcategories \texorpdfstring{$\mathbb{T}(d)_v$}{Tdv}}\label{subsection:Tdv}
Let 
\begin{equation}\label{def:mapi}
i \colon \mathscr{C}(d) \hookrightarrow \mathcal{Y}(d)
\end{equation}
be the natural closed immersion. 
Define the full triangulated subcategory \[\widetilde{\mathbb{T}}(d)_v\subset D^b(\mathcal{Y}(d))\] 
generated by the vector bundles $\mathcal{O}_{\mathcal{Y}(d)}\otimes \Gamma_{GL(d)}(\chi)$
for a dominant weight $\chi$ satisfying
\begin{align*}
	\chi+\rho \in \textbf{W}(d)_{v}. 
	\end{align*}
Define the full triangulated subcategory 
\begin{align}\label{def:N}
	\mathbb{T}(d)_v \subset D^b(\mathscr{C}(d))
	\end{align}
with objects $\mathcal{E}$ such that 
 $i_{\ast}\mathcal{E}$ is in $\widetilde{\mathbb{T}}(d)_v$.
 In \cite[Lemma 4.8]{PT0}, we showed that the Hall product restricts to functors
\[m: \mathbb{T}(d_1)_{v_1}\otimes \mathbb{T}(d_2)_{v_2}\to \mathbb{T}(d)_v\] for $(d, v)=(d_1, v_1)+(d_2, v_2)$ and $\frac{v_1}{d_1}=\frac{v_2}{d_2}$.
Also, there is a semiorthogonal decomposition, see~\cite[Corollary~3.3]{P2}:
\begin{align}\label{sod:C}
	D^b(\mathscr{C}(d))=\left\langle \mathbb{T}(d_1)_{v_1} \boxtimes 
	\cdots \boxtimes \mathbb{T}(d_k)_{v_k} \relmiddle|
	\begin{array}{c}
	v_1/d_1 < \cdots < v_k/d_k \\
	d_1+\cdots+d_k=d
	\end{array} \right\rangle. 
	\end{align}
	In the above, each fully-faithful functor 
	\begin{align*}
	 \mathbb{T}(d_1)_{v_1} \boxtimes 
	\cdots \boxtimes \mathbb{T}(d_k)_{v_k}
	\hookrightarrow D^b(\mathscr{C}(d))
	\end{align*}
	is given by the categorical Hall product (\ref{hall:ast}).

Consider the grading induced by the action of $\mathbb{C}^*$ on $\X(d)$ scaling the linear map corresponding to $Z$ with weight $2$.
The Koszul duality equivalence, also called dimensional reduction in the literature, gives the following equivalence \cite{I, Hirano, T}:
\begin{align}\label{equiv:Phi}
	\Phi \colon D^b(\mathscr{C}(d)) \stackrel{\sim}{\to}
	\mathrm{MF}^{\mathrm{gr}}(\mathcal{X}(d), \Tr W). 
	\end{align}
Under this equivalence, we have that $\Phi\colon \mathbb{T}(d)_v\stackrel{\sim}{\to} \mathbb{S}^{\mathrm{gr}}(d)_v$.

\subsection{Constructions of objects in \texorpdfstring{$\mathbb{T}(d)_v$}{TD}}
Here we review the construction of objects $\mathcal{E}_{d, v} \in \mathbb{T}(d)_v$ (which also produces an object in $\mathbb{T}_T(d)_v$)
following~\cite[Subsection~4.3]{PT0}. 
Let $\mathcal{Z} \subset \mathscr{C}(1, 1, \ldots, 1)$ be the closed substack
defined as follows. 
Let $\lambda$ be the cocharacter 
\begin{align}\label{lambda:cochar}
	\lambda \colon \mathbb{C}^{\ast} \to GL(V), \ 
	t \mapsto (t^d, t^{d-1}, \ldots, t). 
	\end{align}
	The attracting stack of $\mathcal{Y}(d)$
	with respect to $\lambda$ is given by 
	\begin{align}\label{mor:slambda}
	   \mathcal{Y}(d)^{\lambda \geq 0}:=
	   \left(\mathfrak{g}^{\lambda \geq 0}\right)^{\oplus 2}\big/GL(V)^{\lambda \geq 0}, 
	\end{align}
	where $GL(V)^{\lambda \geq 0} \subset GL(V)$ is the subgroup of 
	upper triangular matrices. 
Then the morphism 
(\ref{mor:s})
restricts to the morphism 
\begin{align}\label{mor:slambda2}
	s^{\lambda \geq 0} \colon 
	\mathcal{Y}(d)^{\lambda \geq 0} \to \mathfrak{g}^{\lambda \geq 0}
	\end{align}
whose derived zero locus 
$\mathscr{C}(d)^{\lambda \geq 0}$ is equivalent to $\mathscr{C}(1, \ldots, 1)$. 
Let $X=(x_{i,j})$ and $Y=(y_{i,j})$ be elements of $\mathfrak{g}^{\lambda \geq 0}$
for $1\leq i, j\leq d$, 
where $x_{i,j}=y_{i,j}=0$ for $i>j$. 
Then the 
equation $s^{\lambda \geq 0}(X, Y)=0$ is equivalent to the equations
\begin{align*}
\sum_{i\leq a \leq j} x_{i,a}y_{a,j}=\sum_{i\leq a \leq j} y_{i,a}x_{a,j}, 
	\end{align*}
for each $(i, j)$ with $i\leq j$. 
We call the above equation $\mathbb{E}_{i, j}$. 
The equation $\mathbb{E}_{i, i}$ is 
$x_{i, i} y_{i, i}-y_{i, i} x_{i, i}=0$, which always holds but 
imposes a non-trivial derived structure on 
$\mathscr{C}(1, \ldots, 1)$. 
The equation $\mathbb{E}_{i, i+1}$ is 
\begin{align*}
	(x_{i,i}-x_{i+1, i+1})y_{i, i+1}-(y_{i,i}-y_{i+1, i+1})x_{i, i+1}=0. 
	\end{align*}
The above equation is satisfied if the following 
equation $\mathbb{F}_{i, i+1}$ is satisfied: 
\begin{align*}
	\{x_{i, i}-x_{i+1, i+1}=0, y_{i, i}-y_{i+1, i+1}=0\}. 
	\end{align*}
We define the closed derived substack 
\begin{align}\label{defZ}
	\mathcal{Z}:=\mathcal{Z}(d) \subset \mathcal{Y}^{\lambda \geq 0}(d)
	\end{align}
to be the derived zero locus of 
the equations $\mathbb{F}_{i, i+1}$ for all $i$ and 
$\mathbb{E}_{i, j}$ for all $i+2 \leq j$. We usually drop $d$ from the notation if the dimension is clear from the context.
Then $\mathcal{Z}$ is a closed substack of 
$\mathscr{C}(d)^{\lambda \geq 0}=\mathscr{C}(1, \ldots, 1)$. 
Note that, set theoretically, the closed substack $\mathcal{Z}$
corresponds to filtrations (\ref{filt:Q}) 
such that each $Q_i/Q_{i-1}$ is isomorphic to $\mathcal{O}_x$ for some $x \in \mathbb{C}^2$
independent of $i$. 

We have the diagram of attracting loci 
\begin{align*}
	\mathscr{C}(1)^{\times d}=\mathscr{C}(d)^{\lambda} \stackrel{q}{\leftarrow}
	\mathscr{C}(d)^{\lambda \geq 0} \stackrel{p}{\to} \mathscr{C}(d),
	\end{align*}
where $p$ is a proper morphism. We set
\begin{align}\label{def:mi}
	m_i :=\left\lceil \frac{vi}{d} \right\rceil -\left\lceil \frac{v(i-1)}{d} \right\rceil
	+\delta_i^d -\delta_i^1 \in \mathbb{Z},
	\end{align}
	where $\delta^j_i$ is the Kronecker delta function defined by $\delta^j_i=1$ if $i=j$ and $\delta^j_i=0$ otherwise.
		 For a weight
	 $\chi=\sum_{i=1}^d n_i \beta_i$ with $n_i \in \mathbb{Z}$, 
		    we denote by $\mathbb{C}(\chi)$
		    the one dimensional $GL(V)^{\lambda \geq 0}$-representation given by 
		    \begin{align*}
		       GL(V)^{\lambda \geq 0} \to GL(V)^{\lambda}=T(d) \stackrel{\chi}{\to} \mathbb{C}^{\ast},
		    \end{align*}
		    where the first morphism is the projection.
		    \begin{defn}(\cite[Definition~4.2]{PT0})\label{definition:Edw}
We define the complex $\mathcal{E}_{d, v}$ by 
\begin{align}\label{def:Edw}
\mathcal{E}_{d, v}:=p_{\ast}\left(\mathcal{O}_{\mathcal{Z}} \otimes 
\mathbb{C}(m_1, \ldots, m_d)\right)
\in D^b(\mathscr{C}(d))_v. 
	\end{align}
The construction above is $T$-equivariant, so we also obtain an object $\mathcal{E}_{d, v}\in D^b_T(\mathscr{C}(d))_v$.
	\end{defn}

In \cite[Lemma 4.3]{PT0}, we showed that $\mathcal{E}_{d, v}$ is an object of $\mathbb{T}(d)_v$ and $\mathbb{T}_T(d)_v$.

\subsection{Shuffle algebras}\label{subsection:shufflealgebra}

\subsubsection{}
Consider the $\mathbb{N}$-graded $\mathbb{K}$-module: \[\mathcal{S}h:=\bigoplus_{d\geq 0}\mathbb{K}\left[z_1^{\pm 1}, \ldots, z_d^{\pm 1}\right]^{\mathfrak{S}_d}.\]
We define a shuffle product on $\mathcal{S}h$ as follows. Let $\xi(x)$ be defined by 
\begin{align*}
	\xi(x):=\frac{(1-q_1^{-1}x)(1-q_2^{-1}x)(1-q^{-1}x^{-1})}{1-x},
	\end{align*}
where $q:=q_1 q_2$. 
For $f \in \mathbb{K}\left[z_1^{\pm}, \ldots, z_a^{\pm}\right]$
and $g \in \mathbb{K}\left[z_{a+1}^{\pm}, \ldots, z_{a+b}^{\pm}\right]$, we set 
\begin{align}\label{def:shuffle}
	f \ast g:=\frac{1}{a! b!}
	\mathrm{Sym}\left(fg \cdot \prod_{\substack{1\leq i\leq a,\\ a<j\leq a+b}}
	\xi(z_i z_j^{-1})\right),
	\end{align}
where we denote by $\mathrm{Sym}(h(z_1, \ldots, z_d))$ the sum of 
$h(z_{\sigma(1)}, \ldots, z_{\sigma(d)})$ after all permutations $\sigma \in \mathfrak{S}_d$. Let $\mathcal{S}\subset \mathcal{S}h$ be the subalgebra generated by $z_1^l$ for $l\in \mathbb{Z}$.
Let $\mathcal{S}_{\mathbb{F}}:=\mathcal{S}\otimes_{\mathbb{K}}\mathbb{F}$. It is proved in~\cite[Theorem~4.6]{N2}
	that 
	there is an isomorphism
	\begin{align}\label{isom:S}
		i_{\ast} \colon \bigoplus_{d\geq 0}
		G_T(\mathscr{C}(d))\otimes_{\mathbb{K}}\mathbb{F}
		 \stackrel{\cong}{\to} \mathcal{S}_{\mathbb{F}}. 
		\end{align}
		The above isomorphism is induced by the algebra 
		homomorphism which will be defined in (\ref{i:ast}). 


\subsubsection{}

Let 
	\begin{align*}
		\mathcal{S}' \subset \bigoplus_{d\geq 0} \mathbb{K}\left(z_1, \ldots, z_d\right)^{\mathfrak{S}_d}
		\end{align*}
	be the $\mathbb{K}$-subalgebra generated by elements of the form 
	\begin{equation}\label{Ambullet}
		A'_{k_{\bullet}}:=
		\mathrm{Sym}\left(\frac{z_1^{k_1} \cdots z_d^{k_d}}{(1-q^{-1}z_1^{-1}z_2)\cdots 
			(1-q^{-1}z_{d-1}^{-1}z_d)} 
		\cdot \prod_{j>i}w(z_i z_j^{-1})  \right)
		\end{equation}
	for various $(k_1, \ldots, k_d) \in \mathbb{Z}^d$ and $d\geq 1$, for the shuffle product (\ref{def:shuffle}) where we replace $\xi(x)$ with $w(x)$ defined by
	\begin{align*}
		w(x):=\frac{(1-q_1^{-1}x)(1-q_2^{-1}x)}{(1-x)(1-q^{-1}x)}.
		\end{align*}
		Let $\mathcal{S}'_\mathbb{F}:=\mathcal{S}'\otimes_{\mathbb{K}}\mathbb{F}$.
		Consider the morphism 
	\begin{align}\label{mor:F}
	\bigoplus_{d\geq 0} \mathbb{F}[z_1^{\pm 1}, \ldots, z_d^{\pm 1}]^{\mathfrak{S}_d}
	\to \bigoplus_{d\geq 0} \mathbb{F}(z_1, \ldots, z_d)^{\mathfrak{S}_d}	
		\end{align}
	defined by 
	\begin{align*}
		f(z_1, \ldots, z_d) \mapsto f(z_1, \ldots, z_d) \cdot 
		\prod_{i\neq j}(1-q^{-1} z_i z_j^{-1})^{-1}. 
		\end{align*}
	Then (\ref{mor:F}) induces an algebra homomorphism $\mathcal{S}\to \mathcal{S}'$.
	There is an isomorphism
	\begin{equation}\label{SS'}
	    \mathcal{S}_\mathbb{F} \stackrel{\cong}{\to}
	\mathcal{S}'_{\mathbb{F}},
	\end{equation} see
	\cite[Proof of Lemma 4.11]{PT0}. 
	For $(d, v)\in\mathbb{N}\times\mathbb{Z}$, we set $A'_{d, v}$ to be $A'_{m_{\bullet}}$ 
	for the choice of
	$m_{\bullet}$ in (\ref{def:mi}). 
	By \cite[Equation~(2.12)]{N}, we have the following isomorphism of $\mathbb{K}$-modules:
	\begin{align}\label{basis:S'}
	\mathcal{S}'=\bigoplus_{v_1/d_1 \leq \cdots \leq v_k/d_k}
		\mathbb{K} \cdot A'_{d_1, v_1} \ast \cdots \ast A'_{d_k, v_k},
		\end{align}
		where the tuples $(d_i, v_i)_{i=1}^k$ appearing above
		are unordered for subtuples $(d_i, v_i)_{i=a}^b$
		with $v_a/d_a=\cdots=v_b/d_b$. 
We also define 
\begin{equation}\label{def:Ambullet2}
A_{d, v}:=\mathrm{Sym}\left(\frac{z_1^{m_1} \cdots z_d^{m_d}}{(1-q^{-1}z_1^{-1}z_2)\cdots 
			(1-q^{-1}z_{d-1}^{-1}z_d)} 
		\cdot \prod_{j>i}\xi(z_i z_j^{-1})  \right),
		\end{equation}
	where the exponents $m_i$ for $1\leq i\leq d$ are given by \eqref{def:mi}.

\subsubsection{}
The $T$-equivariant Hall product \eqref{hall:ast2} induces an associative algebra structure 
\begin{align}\label{G:prod}
m \colon G_T(\mathscr{C}(d_1)) \otimes_{\mathbb{K}}G_T(\mathscr{C}(d_2)) \to G_T(\mathscr{C}(d)).
	\end{align}
Let $i \colon \mathscr{C}(d) \hookrightarrow \mathcal{Y}(d)$ be the closed immersion. 
The pull-back 
via $\mathcal{Y}(d) \to BGL(d)$ 
gives the isomorphism 
\begin{align*}
    \bigoplus_{d\geq 0}K_T(BGL(d))=
\bigoplus_{d\geq 0}\mathbb{K}\left[z_1^{\pm 1}, \ldots, z_d^{\pm 1}\right]^{\mathfrak{S}_d}
\stackrel{\cong}{\to}
\bigoplus_{d\geq 0} K_T(\mathcal{Y}(d)). 
\end{align*}
Therefore the push-forward by $i$ induces a morphism 
\begin{align}\label{i:ast}
	i_{\ast} \colon 
	\bigoplus_{d\geq 0} G_T(\mathscr{C}(d)) \to 
\bigoplus_{d\geq 0}\mathbb{K}\left[z_1^{\pm 1}, \ldots, z_d^{\pm 1}\right]^{\mathfrak{S}_d}. 
	\end{align}
The product (\ref{G:prod}) is compatible with a shuffle product
defined on the right hand side of \eqref{i:ast}, see \cite[Subsection 4.5]{PT0}.
In \cite[Lemma 4.11]{PT0}, we showed that
	\begin{equation}\label{elem:E}
		i_{\ast}[\mathcal{E}_{d, v}] 
		=(1-q_1^{-1})^{d-1}(1-q_2^{-1})^{d-1}A_{d, v}.
		\end{equation}

\subsection{Compatibility of the Hall product under the Koszul equivalence}\label{subsection211}

In this subsection, we denote by $m$ the Hall product \eqref{hall:ast} and by $\widetilde{m}$ the Hall product for the quiver with potential $(Q, W)$ from Subsection \ref{sss1}.
Using the results in~\cite[Section~2.4]{T}, 
	Koszul 
	duality equivalences are compatible with
	the Hall products by the following commutative diagram (see~\cite[Proposition~3.1]{P2}):
	\begin{align}\label{com:hall}
		\xymatrix{
	D^b(\mathscr{C}(d_1)) \boxtimes D^b(\mathscr{C}(d_2)) \ar[r]^-{m} \ar[d]^-{\widetilde{\Phi}} &
	D^b(\mathscr{C}(d)) \ar[d]^-{\Phi} \\
	  \text{MF}^{\text{gr}}(\mathcal{X}(d_1), \Tr W_{d_1})
	  \boxtimes  \text{MF}^{\text{gr}}(\mathcal{X}(d_2),  \Tr W_{d_2})
	  \ar[r]^-{\widetilde{m}} &  \text{MF}^{\text{gr}}(\mathcal{X}(d), \Tr W_d),
	}
	\end{align}
where the left arrow $\widetilde{\Phi}$ is the composition of Koszul duality equivalences (\ref{equiv:Phi}) with the 
tensor product of 
\begin{equation}\label{def:factordimred}
	\det((\mathfrak{g}^{\nu>0})^{\vee}(2))[-\dim \mathfrak{g}^{\nu>0}]
	=(\det V_1)^{-d_2} \otimes (\det V_2)^{d_1}[d_1 d_2].
	\end{equation}
	The cocharacter 
	$\nu \colon \mathbb{C}^{\ast} \to T(d)$
is $\nu(t)=(\overbrace{t, \ldots, t}^{d_1}, \overbrace{1, \ldots, 1}^{d_2})$, the vector spaces $V_i$ have 
$\dim V_i=d_i$ for $i=1, 2$, and 
	$(1)$ is a twist by the weight one $\mathbb{C}^{\ast}$-character, 
	which is isomorphic to the shift functor $[1]$
	of the category of graded matrix factorizations.

\subsection{Symmetric polynomials}\label{subsection:symmetric}
Let $\Lambda$ be the $\mathbb{Z}$-algebra of symmetric polynomials \cite[Chapter I, Section~2]{MacDonald}, \cite[Subsection 2.4]{S}:
\[\Lambda\cong \varprojlim \mathbb{Z}\left[x_1,\ldots, x_n\right]^{\mathfrak{S}_n},\] with multiplication defined by 
\[f(x_1,\ldots, x_a)\star g(x_{a+1},\ldots, x_{a+b}):=\sum_{\mathfrak{S}_{a+b}/\mathfrak{S}_a\times \mathfrak{S}_b}w\left(f(x_1,\ldots, x_a) g(x_{a+1},\ldots, x_{a+b})\right)\]
and comultiplication induced by the restriction map
\[\mathbb{Z}\left[x_1,\ldots, x_{a+b}\right]^{\mathfrak{S}_{a+b}}\to \mathbb{Z}\left[x_1,\ldots, x_a\right]^{\mathfrak{S}_a}\otimes \mathbb{Z}\left[x_{a+1},\ldots, x_{a+b}\right]^{\mathfrak{S}_b}.\]
Alternatively, $\Lambda$ is isomorphic to 
the Grothendieck group of the monoidal category 
\[\mathcal{R}:=\bigoplus_{n\geq 0}\text{Rep}(\mathfrak{S}_n),\] 
where $\text{Rep}(\mathfrak{S}_n)$ is the abelian category of finite dimensional $\mathfrak{S}_n$-representations,  multiplication is given by the induction functor \[\text{Ind}: \text{Rep}(\mathfrak{S}_a\times \mathfrak{S}_b)\to \text{Rep}(\mathfrak{S}_{a+b}),\] and comultiplication is given by the restriction functor \[\text{Res}: \text{Rep}(\mathfrak{S}_{a+b})\to \text{Rep}(\mathfrak{S}_a\times \mathfrak{S}_b).\]
The isomorphism $\mathcal{R} \stackrel{\cong}{\to} \Lambda$
is given by sending an irreducible $\mathfrak{S}_n$-representation $W_{\lambda}$
corresponding to a partition $\lambda$ of $n$ 
to the Schur function $s_{\lambda}$, see~\cite[Chapter I, Equation (7.5)]{MacDonald}.

For $R$ a ring with a map $\mathbb{Z}\to R$, denote by $\Lambda_R:=R\otimes_{\mathbb{Z}}\Lambda$ the $R$-algebra
with multiplication and comultiplication induced from those of $\Lambda$. 
Consider the elementary symmetric functions 
\[e_n:=\sum_{i_1<\ldots<i_n}x_{i_1}\ldots x_{i_n}\in \Lambda\]
and the power sum functions
\[p_n:=\sum_{i}x_i^n\in \Lambda.\]
We also denote by $e_n$ and $p_n$ the images of these symmetric functions in $\Lambda_R$.
Let $t$ be a formal variable.
These functions are connected via the identity
\begin{equation}\label{elempowersum}
\sum_{n\geq 0}e_nt^n=\text{exp}\left(\sum_{n\geq 1}\frac{(-1)^{n+1}}{n}p_nt^n\right).\end{equation}
There are isomorphisms (see~\cite[Chapter I, Equations (2.4), (2.14)]{MacDonald}):
\begin{equation}\label{lambdaep}
\Lambda_{\mathbb{Q}}\cong \mathbb{Q}[e_1, e_2, \ldots]\cong \mathbb{Q}[p_1, p_2, \ldots].
\end{equation}
For $n\geq 1$, let $P(n)$ be the free one dimensional $\mathbb{Z}$-module with generator $p_n$. By \eqref{lambdaep}, there is an isomorphism of $\mathbb{N}$-graded $\mathbb{Q}$-vector spaces:
\begin{equation}\label{PBWmacdonald}
\Lambda_{\mathbb{Q}}\cong \bigotimes_{n\geq 1}\mathrm{Sym}\big(P(n)_{\mathbb{Q}}\big).
\end{equation}

\section{The support of complexes in quasi-BPS categories}\label{s3}

Recall the regular function \eqref{equation:NHilbTrW}. 
Consider the commutative diagram 
\begin{align}\label{dia:Cohsym}
	\xymatrix{
		\mathcal{C}oh(\mathbb{C}^3, d) \ar@{=}[r] \ar[d]_-{\pi} & \mathrm{Crit}(\Tr W_d)  \ar@<-0.3ex>@{^{(}->}[r]&
		\mathcal{X}(d) \ar[rd]^-{\Tr W_d} \ar[d]_-{\pi} & \\
		\mathrm{Sym}^d(\mathbb{C}^3)  \ar@<-0.3ex>@{^{(}->}[rr]& & X(d) \ar[r] & \mathbb{C},
}
	\end{align}
where the vertical arrows are good moduli space morphisms and the horizontal arrows 
are closed immersions. 
The left vertical arrow 
sends a zero-dimensional sheaf to its support. 
Let $\Delta\colon \mathbb{C}^3 \subset \mathrm{Sym}^d(\mathbb{C}^3)$
be the small diagonal 
\begin{align*}
\mathbb{C}^3 \subset \mathrm{Sym}^d(\mathbb{C}^3), \ 
x \mapsto (x, \ldots, x). 
\end{align*}
We abuse notation and denote the image of $\Delta$ also by $\Delta\cong \mathbb{C}^3$.
We consider the pull-back $\pi^{-1}(\Delta) \subset \mathcal{C}oh(\mathbb{C}^3, d)$, 
which is a 
closed substack of $\mathrm{Crit}(\Tr W_d) \subset \mathcal{X}(d)$.  
Davison~\cite[Theorem~5.1]{Dav} showed that the BPS sheaf for the moduli stack of degree $d$ sheaves on $\mathbb{C}^3$ is 
\[\mathcal{B}PS_d=\Delta_*\mathrm{IC}_{\mathbb{C}^3}.\]
Recall the notations involving formal completions from Subsection \ref{subsection:notation}. 
The following is the main result of this section, which will be proved in Subsection \ref{ss31}:

\begin{thm}\label{lem:support}
Consider a pair $(d, w)\in \mathbb{N}\times \mathbb{Z}$ and let $\mathcal{F}$ be an object in $\mathbb{S}(d)_w$. Assume there exists $p\in \mathrm{Sym}^d(\mathbb{C}^3)\setminus \Delta$ such that the support of $\mathcal{F}$ intersects $\pi^{-1}(p)$.  Write $p=\sum_{i=1}^l p^{(i)}$, $p^{(i)}=d^{(i)}x^{(i)}$, $d^{(i)}\in \mathbb{Z}_{>0}$, $x^{(i)}\neq x^{(i')}$ for $i\neq i'$ and $l\geq 2$. 
Then there exist non-zero objects
$\mathcal{F}_i \in \mathrm{MF}(\mathcal{X}_{p^{(i)}}(d^{(i)}), \Tr W_{d^{(i)}})_{w^{(i)}}$
 for $1\leq i\leq l$ with \[\frac{w^{(1)}}{d^{(1)}}=\cdots=\frac{w^{(l)}}{d^{(l)}}\] 
 and $w^{(1)}+\cdots+w^{(l)}=w$.
In particular, if $\gcd(d, w)=1$, 
then any object in $\mathbb{S}(d)_w$ is supported on 
$\pi^{-1}(\Delta)$. The same result holds for the categories $\mathbb{S}^{\bullet}_{\ast}(d)_w$ introduced in Subsection \ref{gradingMF}.
\end{thm}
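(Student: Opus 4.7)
The plan is to work in the formal fiber $\mathcal{X}(d)_p$, apply the Luna slice theorem together with a Kn\"orrer-type dimensional reduction for the potential $\mathrm{Tr}\, W_d$, and invoke the polytope condition defining $\mathbb{M}(d)_w$ to pin down the weight ratios.

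First, since the $x^{(i)}\in\mathbb{C}^3$ are pairwise distinct, the closed orbit over $p$ corresponds to the polystable triple $V=\bigoplus_{i=1}^l V^{(i)}$ with $\dim V^{(i)}=d^{(i)}$, on which each of $A,B,C$ acts as the corresponding scalar of $x^{(i)}$; its stabilizer is $\prod_i GL(d^{(i)})$. Taking the antidominant cocharacter $\lambda=\lambda_{\dd}\colon \mathbb{C}^{\ast}\to T(d)$ associated to $\dd=(d^{(1)},\ldots,d^{(l)})$ as in Subsection~\ref{paco}, its fixed locus is $\mathcal{X}(d)^\lambda\cong \prod_i\mathcal{X}(d^{(i)})$. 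By the Luna \'etale slice theorem (or its stacky refinement), $\mathcal{X}(d)_p$ is a formal neighborhood of $\prod_i\mathcal{X}(d^{(i)})_{p^{(i)}}$ inside the slice, with extra ``off-diagonal'' coordinates from $\mathrm{Hom}(V^{(i)},V^{(j)})^{\oplus 3}$ for $i\neq j$; since the scalars $x^{(i)}$ are pairwise distinct, the potential $\mathrm{Tr}\, W_d$ acquires a nondegenerate quadratic part on these off-diagonal coordinates (coming from the scalar differences $x^{(j)}-x^{(i)}$), so a Kn\"orrer periodicity argument identifies
\[
\mathrm{MF}(\mathcal{X}(d)_p,\mathrm{Tr}\,W_d)\simeq \mathrm{MF}\Big(\prod_i\mathcal{X}(d^{(i)})_{p^{(i)}},\, \sum_i \mathrm{Tr}\,W_{d^{(i)}}\Big).
\]
Under this equivalence, the nonzero object $\mathcal{F}|_{\mathcal{X}(d)_p}$ yields, via the $\lambda$-weight decomposition, nonzero pieces $\mathcal{F}_i\in\mathrm{MF}(\mathcal{X}(d^{(i)})_{p^{(i)}},\mathrm{Tr}\,W_{d^{(i)}})_{w^{(i)}}$ with $\sum_i w^{(i)}=w$, using the identification $\bigoplus_i M(d^{(i)})\cong M(d)$ of Subsection~\ref{id}.

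The ratio constraint $w^{(i)}/d^{(i)}=w/d$ then comes from the polytope condition defining $\mathbb{M}(d)_w$: every generating vector bundle $\mathcal{O}_{\mathcal{X}(d)}\otimes\Gamma(\chi)$ satisfies $\chi+\rho\in\mathbf{W}(d)_w$, and since the support of $\mathcal{F}$ sits over the stratum of $X(d)$ labelled by $\dd$, only weights on the face $F(\lambda)$ of $\mathbf{W}(d)$ contribute after restriction to the attracting locus. By the transformation $A\mapsto A'$ of \eqref{trans:A} together with \eqref{w:prime}, this face parametrizes partitions $(d_i,w_i)_{i=1}^l$ of $(d,w)$ with constant ratio $w_i/d_i=w/d$, forcing $w^{(i)}/d^{(i)}=w/d$ on each factor. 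The \emph{in particular} clause is then immediate: for $\gcd(d,w)=1$, the equality $w^{(i)}/d^{(i)}=w/d$ with $d^{(i)}<d$ forces $d\mid d^{(i)}$, a contradiction; hence $l=1$ and $p\in\Delta$. The equivariant and graded variants $\mathbb{S}^{\bullet}_{\ast}(d)_w$ follow by the same argument, since $\lambda$ commutes with the $T$- and grading-actions.

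The main obstacle is the careful combination of the Luna slice decomposition with the Kn\"orrer reduction for $\mathrm{Tr}\,W_d$---in particular, verifying that the off-diagonal Luna coordinates really split off (the subtlety being that $\mathrm{Tr}\,W_d$ is cubic rather than purely quadratic, so one must isolate the nondegenerate quadratic part and control the remaining cubic perturbation in a formal neighborhood)---together with the polytope combinatorics, which require showing that objects of $\mathbb{M}(d)_w$ supported over the stratum of $\dd$ have their $T(d)$-weights confined to $F(\lambda)$ rather than spread throughout $\mathbf{W}(d)_w$. This last point is the categorical counterpart of Davison's support lemma \eqref{supportlemma}.
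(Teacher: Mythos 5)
Your overall route (Luna/\'etale slice at the polystable point over $p$, Kn\"orrer periodicity splitting off the off-diagonal directions, then a weight argument forcing equal slopes) is the same skeleton as the paper's proof, and the first half is essentially right, up to one inaccuracy: the slice is $\widehat{\Ext}^1_Q(R,R)$, whose off-diagonal part is $\Hom(V^{(i)},V^{(j)})^{\oplus 2}$, not $\Hom(V^{(i)},V^{(j)})^{\oplus 3}$; it is exactly this $U\oplus U^{\vee}$ (for $l=2$) that carries the nondegenerate quadratic form after the explicit $G_p$-equivariant change of variables (Lemma~\ref{sublem:1.5}), so the Kn\"orrer step does go through, but only after this bookkeeping is done correctly. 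One also needs to know that $\mathcal{F}|_{\X_p(d)}$ lands in the \emph{local} quasi-BPS category $\mathbb{S}_p(d)_w$ defined by the slice polytope $\mathbf{W}_p(d)_w$, which differs from $\mathbf{W}(d)_w$ (this is Lemma~\ref{sublem1}, comparing $\rho$ with $\rho_p$), and that the Kn\"orrer functor sends products of local quasi-BPS categories into $\mathbb{S}_p(d)_w$ (Lemma~\ref{lemma:restS}).

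The genuine gap is in the final step, where you derive the slope constraint $w^{(1)}/d^{(1)}=\cdots=w^{(l)}/d^{(l)}$. Your argument asserts that, because the support of $\mathcal{F}$ sits over the stratum labelled by $\dd$, ``only weights on the face $F(\lambda)$ of $\mathbf{W}(d)$ contribute after restriction to the attracting locus.'' No attracting locus is being restricted to here (the restriction is to a formal fiber), and the claimed confinement of weights to a face is not a formal consequence of the polytope condition defining $\mathbb{M}(d)_w$ --- it is essentially the content of the theorem, i.e.\ the categorical support lemma itself, so the argument is circular at exactly the crucial point. What the paper actually does is different in mechanism: after Kn\"orrer periodicity one has a fully faithful functor \eqref{rest:M} from the direct sum of equal-slope products $\mathbb{S}_{p^{(1)}}(d^{(1)})_{w^{(1)}}\boxtimes\mathbb{S}_{p^{(2)}}(d^{(2)})_{w^{(2)}}$ into $\mathbb{S}_p(d)_w$, and the issue is to show it has \emph{dense image}. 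This is proved by combining the local semiorthogonal decomposition \eqref{sod:mf2} of $\mathrm{MF}(\X_p(d),\Tr W_d)$ into categorical Hall products of local quasi-BPS categories with Lemma~\ref{sublem2}, an $r$-invariant estimate showing that every Hall-product summand of length $k\geq 2$ is right orthogonal to $\mathbb{S}_p(d)_w$; then any object of $\mathbb{S}_p(d)_w$ is a summand of an object coming from the equal-slope part, which is what produces the nonzero $\mathcal{F}_i$ with $w^{(i)}/d^{(i)}$ all equal. Your proposal contains no substitute for this orthogonality/density argument, so as written the equal-slope conclusion (and hence the coprime case, whose final numerical deduction you do state correctly) is not established.
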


In Subsection \ref{ss32}, we discuss divisibility properties of complexes supported on $\pi^{-1}(\Delta)$, see Lemma \ref{lemma:div}. In Subsection \ref{ss33}, we use Theorem \ref{lem:support} and Lemma \ref{lemma:div} to 
show that $K_T(\mathbb{S}(d)_w)'$ is a free $\mathbb{K}$-module with generator $[\mathcal{E}_{d,w}]$ when $\gcd(d, w)=1$.

\subsection{Proof of the main result}\label{ss31}

\begin{proof}[Proof of Theorem \ref{lem:support}]
		Suppose that 
		an object $\mathcal{F} \in \mathbb{S}(d)_w$
		has support not contained in $\pi^{-1}(\Delta)$. 			Then there is $p \in \mathrm{Sym}^d(\mathbb{C}^3) \setminus \Delta$
such that 
	$\mathcal{F}|_{\mathcal{X}_p(d)} \neq 0$
	in $\mathrm{MF}(\X_p(d), \Tr W_d)$, where 
	$\mathcal{X}_p(d)$ is the formal fiber of $p$ along 
	the good moduli space morphism $\mathcal{X}(d) \to X(d)$. 
	Since $p \notin \Delta$, it is written as 
\begin{align*}
	p=\sum_{i=1}^l p^{(i)}, \ p^{(i)}=d^{(i)}x^{(i)}, \ 
	d^{(i)} \in \mathbb{Z}_{>0}, \ 
	x^{(i)} \neq x^{(i')} \mbox{ for } i \neq i', \ l\geq 2.  
\end{align*}
	The unique closed point in $\pi^{-1}(p)$ corresponds to 
	the semisimple $Q$-representation 
	\begin{align*}
		R=\bigoplus_{i=1}^l V^{(i)} \otimes R^{(i)},
		\end{align*}
	where $R^{(i)}$ is the one-dimensional $Q$-representation 
	corresponding to $\mathcal{O}_{x^{(i)}}$ and $V^{(i)}$ is a 
	$d^{(i)}$-dimensional vector space
	such that $d^{(1)}+\cdots+d^{(l)}=d$. 
	Below we write a basis of $V^{(i)}$ as
	$\beta_1^{(i)}, \ldots, \beta_{d^{(i)}}^{(i)}$ and 
	set 
	\begin{align*}
		\{\beta_1, \ldots, \beta_d\}=\{\beta_1^{(1)}, \ldots, \beta_{d^{(1)}}^{(1)}, \beta_1^{(2)}, \ldots, \beta_{d^{(2)}}^{(2)},
		 \ldots \}. 
		\end{align*}
	The \'{e}tale slice theorem implies that 
	\begin{align}\label{isom:Xp}
		\mathcal{X}_p(d) \cong  \widehat{\Ext}_Q^1(R, R)/G_p,
		\end{align}
	where $G_p=\mathrm{Aut}(R)=\prod_{i=1}^l GL(V^{(i)})$
	and 
	$\widehat{\Ext}^1_Q(R, R)$ is the formal fiber of the origin 
	along the morphism $\Ext^1_Q(R, R) \to \Ext_Q^1(R, R)/\!\!/G_p$.  
	By Lemma~\ref{sublem0},
	the Ext-group $\Ext^1_Q(R, R)$ is computed as follows:
	\begin{align}\label{Ext1}
		\Ext_Q^1(R, R)=\bigoplus_{i=1}^l \mathrm{End}(V^{(i)}, V^{(i)})^{\oplus 3}
		\oplus \bigoplus_{i \neq j} \Hom(V^{(i)}, V^{(j)})^{\oplus 2}. 
		\end{align}
	Note that the maximal torus $T(d) \subset GL(V)$ is contained in $G_p$. 
	
	We define 
	\begin{align*}
		\mathbb{S}_p(d)_{w} \subset \mathrm{MF}(\mathcal{X}_p(d), \Tr W_d)
		\end{align*}
	to be the full subcategory generated by matrix factorizations 
	whose entries are of the form $\Gamma_{G_p}(\chi) \otimes \mathcal{O}$, 
	where $\chi$ is a $G_p$-dominant $T(d)$-weight
	satisfying 
	\begin{align}\label{wei:chip}
		\chi+\rho_p \in \textbf{W}_p(d)_w. 
		\end{align}
	Here, $\rho_p$ is half the sum of positive roots of $G_p$
	and $\textbf{W}_p(d)_{w}$ is defined as in (\ref{W0}) 
	for the $G_p$-representation $\Ext_Q^1(R, R)$:
	\begin{align*}
	    \textbf{W}_p(d)_{w}:=\frac{1}{2}\mathrm{sum}[0, \beta]+w \tau_d,
	\end{align*}
	where the Minkowski sum is after all weights $\beta$ in $\Ext_Q^1(R, R)$. 
	By Lemma~\ref{sublem1}, we have $\mathcal{F}|_{\X_p(d)} \in \mathbb{S}_p(d)_w$, 
	in particular $\mathbb{S}_p(d)_{w} \neq 0$. 

	Below, in order to simplify the notation, we treat the case $l=2$. 
	Since any zero-dimensional sheaf $Q$ on $\mathbb{C}^3$ supported on 
	$p$ decomposes into 
	$Q^{(1)} \oplus Q^{(2)}$, where $Q^{(i)}$ is supported on $x^{(i)}$, we have 
	\begin{align}\label{coh:tr}
		\widehat{\mathcal{C}oh}_p(\mathbb{C}^3, d)=
		\mathrm{Crit}( \Tr W_d|_{\mathcal{X}_p(d)})=\widehat{\mathcal{C}oh}_{p^{(1)}}(\mathbb{C}^3, d^{(1)})
		\times \widehat{\mathcal{C}oh}_{p^{(2)}}(\mathbb{C}^3, d^{(2)}).
	\end{align}	
	Here $\widehat{\mathcal{C}oh}_p(\mathbb{C}^3, d)$
	is the formal fiber of the left vertical arrow in (\ref{dia:Cohsym}) at $p$. 
	Indeed, by Lemma~\ref{sublem:1.5}, we can show that, by replacing the isomorphism (\ref{isom:Xp}) if necessary, 
	the regular function $\Tr W_d$ restricted to $\mathcal{X}_p(d)$ is written as 
	\begin{align}\label{W:Xpd}
	 \Tr W_d|_{\mathcal{X}_p(d)}= \Tr W_{d^{(1)}}\boxplus 
	 \Tr W_{d^{(2)}} \boxplus q.
	\end{align}
	Here, $\Tr W_{d^{(i)}}$ is the regular function (\ref{equation:NHilbTrW}) on $\mathcal{X}(d^{(i)})$
	restricted to $\mathcal{X}_{p^{(i)}}(d^{(i)})$ and
	$q$ is a non-degenerate $G_p$-invariant 
	quadratic form on $U\oplus U^{\vee}$
	given by $q(u, v)=\langle u, v \rangle$, where $U$ is the 
	following self-dual $G_p$-representation 
	\begin{align*}
		U:=\Hom(V^{(1)}, V^{(2)}) \oplus \Hom(V^{(2)}, V^{(1)}). 	
	\end{align*}
	The decomposition (\ref{W:Xpd}) in particular 
	implies (\ref{coh:tr}). 
	
	We have the following diagram 
	\begin{align*}
		\xymatrix{
			\mathcal{U} \ar@<-0.3ex>@{^{(}->}[r]^-{i} \ar[d]_-{p} 
			& \mathcal{U} \oplus \mathcal{U}^{\vee} & \ar[l]_-{j} \mathcal{X}_p(d) \\
			\mathcal{X}_{p^{(1)}}(d^{(1)}) \times \mathcal{X}_{p^{(2)}}(d^{(2)}), & &	
		}
	\end{align*}
	where $\mathcal{U}$ is the vector bundle on $\mathcal{X}_{p^{(1)}}(d^{(1)}) \times \mathcal{X}_{p^{(2)}}(d^{(2)})$ determined by the $G_p$-representation $U$, $i$ is the closed immersion $x \mapsto (x, 0)$, 
	and $j$ is the natural morphism 
	induced by the formal completion which induces the isomorphism on critical loci of $\Tr W_d$. 
	We have the following functors 
	\begin{align}\label{Kn:eq}
		\Psi := j^{\ast}i_{\ast}p^{\ast} \colon 
		\mathrm{MF}(\mathcal{X}_{p^{(1)}}(d^{(1)}), \Tr W_{d^{(1)}}) &\boxtimes 
		\mathrm{MF}(\mathcal{X}_{p^{(2)}}(d^{(2)}), \Tr W_{d^{(2)}}) \\
		\notag &\stackrel{\sim}{\to}\mathrm{MF}(\mathcal{U} \oplus \mathcal{U}^{\vee}, \Tr W_d) 
		\stackrel{j^{\ast}}{\hookrightarrow}
		 \mathrm{MF}(\mathcal{X}_p(d), \Tr W_d). 
	\end{align}
	Here the first arrow is an equivalence 
	by Kn\"orrer periodicity (see~\cite[Theorem~4.2]{Hirano}), 
	and the second arrow is fully-faithful with dense image (see~\cite[Lemma~6.4]{T3}). 
		By Lemma~\ref{lemma:restS},
	the above functor
	restricts to the fully-faithful functor:
	\begin{align}\label{rest:M}
		\bigoplus_{\begin{subarray}{c}w^{(1)}+w^{(2)}=w \\
				w^{(1)}/d^{(1)}=w^{(2)}/d^{(2)}
		\end{subarray}}
		\mathbb{S}_{p^{(1)}}(d^{(1)})_{w^{(1)}} \boxtimes 
		\mathbb{S}_{p^{(2)}}(d^{(2)})_{w^{(2)}}
		\to \mathbb{S}_{p}(d)_w. 
	\end{align}
	
	Below we show that the above functor (\ref{rest:M}) has dense image, and thus the conclusion follows. 
	By the semiorthogonal decomposition (\ref{sod:C}) together with (\ref{com:hall}), 
	we have 
	\begin{align}\label{sod:mf}
		&\mathrm{MF}(\mathcal{X}(d), \Tr W_d) \\
		\notag &= 
		\left\langle \ast_{i=1}^k \mathbb{S}(d_i)_{v_i+d_i(\sum_{i>j}d_j-\sum_{j>i}d_j)} \relmiddle|
		\frac{v_1}{d_1}<\cdots<\frac{v_k}{d_k}, d_1+\cdots+d_k=d\right\rangle. 
	\end{align}
The weights $w_i:=v_i+d_i(\sum_{i>j}d_j-\sum_{j>i}d_j)$
are computed as in Equation (\ref{w:prime}):
\begin{align}\label{def:wi}
	\sum_{i=1}^k w_i \tau_{d_i}=\sum_{i=1}^k v_i \tau_{d_i}
	-\mathfrak{g}^{\lambda>0},
	\end{align}
where $\mathfrak{g}$ is the Lie algebra of $GL(d)$ and 
$\lambda$ is the antidominant cocharacter 
corresponding to the decomposition $d=d_1+\cdots+d_k$:
\begin{align}\label{lambda:d}
    \lambda(t)=(\overbrace{t^k, \ldots, t^k}^{d_1}, \overbrace{t^{k-1}, \ldots, t^{k-1}}^{d_2}, \ldots, \overbrace{t, \ldots, t}^{d_k}). 
\end{align}
For $x \in \mathbb{C}^3$, let $q=d[x]:=(x,\ldots, x) \in \mathrm{Sym}^d(\mathbb{C}^3)$. 
Since $G_q=GL(V)$ and $\X_q(d)\cong 
\mathfrak{g}^{\oplus 3}_{d[0]}/GL(V)$, the argument showing (\ref{sod:mf})
applies verbatim to show the semiorthogonal decomposition 
\begin{align}\label{sod:mf2}
		&\mathrm{MF}(\mathcal{X}_q(d), \Tr W_d) \\
		\notag &= 
		\left\langle \ast_{i=1}^k \mathbb{S}_q(d_i)_{v_i+d_i(\sum_{i>j}d_j-\sum_{j>i}d_j)} \relmiddle|
		\frac{v_1}{d_1}<\cdots<\frac{v_k}{d_k}, d_1+\cdots+d_k=d\right\rangle. 
	\end{align}
	
		From (\ref{sod:mf2}), the left hand side of (\ref{Kn:eq}) admits a 
	semiorthogonal decomposition S whose summands are of the form 
	\begin{align}\label{summand:Mp}
		\ast_{i=1}^a \mathbb{S}_{p_i^{(1)}}(d_i^{(1)})_{v_i^{(1)}+d_i^{(1)}(\sum_{i>j}d_j^{(1)}-\sum_{j>i}d_j^{(1)})}
		\boxtimes 	\ast_{i=1}^b \mathbb{S}_{p_i^{(2)}}(d_i^{(2)})_{v_i^{(2)}+d_i^{(2)}(\sum_{i>j}d_j^{(2)}-\sum_{j>i}d_j^{(2)})}. 	\end{align}
	Here, 
	the left hand side in (\ref{rest:M}) comes to the rightmost part of the semiorthogonal 
	decomposition S, 
	 $p_i^{(j)}=d_i^{(j)}x^{(j)}$,
	 the integers
	$d_i^{(j)}$ satisfy 
	\begin{align*}
		d_1^{(1)}+ \cdots+ d_a^{(1)}=d^{(1)}, \ 
		d_1^{(2)}+ \cdots+ d_b^{(2)}=d^{(2)}, 
	\end{align*}
	and we have the inequalities 
	\begin{align*}
		\frac{v_1^{(1)}}{d_1^{(1)}}<\cdots<	\frac{v_a^{(1)}}{d_a^{(1)}}, \ 
		\frac{v_1^{(2)}}{d_1^{(2)}}<\cdots<	\frac{v_b^{(2)}}{d_b^{(2)}}.
	\end{align*}
	We write 
	\begin{align*}
		\left\{ \frac{v_1^{(1)}}{d_1^{(1)}}, \ldots, \frac{v_a^{(1)}}{d_a^{(1)}}, \ldots, 
		\frac{v_1^{(2)}}{d_1^{(2)}}, \ldots, \frac{v_b^{(2)}}{d_b^{(2)}}    \right\}=
		\{\mu_1<\cdots<\mu_k\},
	\end{align*}
	where $k$ is the number of distinct elements in the left hand side. 
	For $1\leq i\leq k$, we replace $(d_i^{(j)}, v_i^{(j)})$ by 
	\begin{align*}
		(d_i^{(j)}, v_i^{(j)}) \mapsto \begin{cases}
			(d_l^{(j)}, v_l^{(j)}), & \mbox{ if } \mu_i=v_l^{(j)}/d_l^{(j)} \mbox{ for some }l, \\
			(0, 0), & \mbox{ otherwise}. 
		\end{cases}	
	\end{align*}
	The subcategory (\ref{summand:Mp}) of (\ref{rest:M}) is unchanged under the above replacement. 
	Therefore we may assume that $a=b=k$ and, by setting $(d_i, v_i)=(d_i^{(1)}, v_i^{(1)})
	+(d_i^{(2)}, v_i^{(2)})$, we have   
	\begin{align*}
		\frac{v_i}{d_i}=\frac{v_i^{(j)}}{d_i^{(j)}}, \ 
		\frac{v_1}{d_1}< \cdots< \frac{v_k}{d_k}. 
	\end{align*}
	Here the first identity holds whenever $(d_i^{(j)}, v_i^{(j)}) \neq (0, 0)$. 
	
	Let us take
	decompositions 
	$V^{(j)}=\oplus_{i=1}^k V_i^{(j)}$ with 
	$\dim V_i^{(j)}=d_i^{(j)}$ and 
	a cocharacter as in (\ref{lambda:d})
	\begin{align}\label{lambda:12}
		\lambda \colon \mathbb{C}^{\ast} \to GL(V^{(1)}) \times GL(V^{(2)})
	\end{align}
	which acts on $V_i^{(j)}$ with weight $k+1-i$. 
	The diagrams of attracting loci with respect to $\lambda$ give the 
	commutative diagram, see~\cite[Proposition~2.5]{T3}:
	\begin{align*}
		\xymatrix{
			\boxtimes_{i=1}^k (\mathrm{MF}(\mathcal{X}_{p_i^{(1)}}(d_i^{(1)}), \Tr W_{d_i^{(1)}}) \boxtimes 
			(\mathrm{MF}(\mathcal{X}_{p_i^{(2)}}(d_i^{(2)}), \Tr W_{d_i^{(2)}})
			) \ar[r] \ar[d]_{\ast \boxtimes \ast} & \boxtimes_{i=1}^k \mathrm{MF}(\mathcal{X}_{p_i}(d_i),  \Tr W_{d_i}) \ar[d]_-{\ast} \\
			\mathrm{MF}(\mathcal{X}_{p^{(1)}}(d^{(1)}), \Tr W_{d^{(1)}}) \boxtimes 
			\mathrm{MF}(\mathcal{X}_{p^{(2)}}(d^{(2)}), \Tr W_{d^{(2)}}) \ar[r] & 
			\mathrm{MF}(\mathcal{X}_p(d), \Tr W_d).
		}
	\end{align*}
	Here, the vertical arrows are categorical Hall products determined by 
	$\lambda$, $p_i:=p_i^{(1)}+p_i^{(2)}$, 
	the bottom horizontal arrow is (\ref{Kn:eq}), and the top horizontal arrow is the composition of the Kn\"orrer periodicity equivalences with 
	the tensor product of (a shift of)
	\begin{align*}
		\det \left((U^{\lambda>0})^{\vee}\right)=
		\bigotimes_{i=1}^k\left( (\det V_i^{(1)})^{\sum_{i>j}d_j^{(2)}-\sum_{j>i}d_j^{(2)}}
		\otimes  (\det V_i^{(2)})^{\sum_{i>j}d_j^{(1)}-\sum_{j>i}d_j^{(1)}}\right). 
	\end{align*} 
 By the above commutative diagram together with the fact that (\ref{Kn:eq})
 restricts to the functor (\ref{rest:M}), 
the functor (\ref{Kn:eq})
	sends (\ref{summand:Mp}) to 
	\begin{align}\label{subcat:Mk}
	\ast_{i=1}^k	\mathbb{S}_{p_i}(d_i)_{v_i+d_i(\sum_{i>j}d_j-\sum_{j>i}d_j)}
		\subset \mathrm{MF}(\mathcal{X}_p(d), \Tr W_d). 
	\end{align}
	Let us take a decomposition \[w=v_1+\cdots+v_k=w_1+\cdots+w_k,\]
	where $w_i$ is given in (\ref{def:wi}). 
	Then by Lemma~\ref{sublem2}, 
	the subcategory (\ref{subcat:Mk}) for $k\geq 2$ 
	is right orthogonal to $\mathbb{S}_p(d)_w$. 
	Together with the 
	semiorthogonal decomposition $\mathrm{S}$ with summands (\ref{summand:Mp}), 
	we conclude that the functor (\ref{rest:M})
	has dense image. Indeed, for an object $A \in \mathbb{S}_p(d)_w$, 
	there is a distinguished triangle
	\begin{align}\label{dist:AB}
	    A_1 \xrightarrow{\alpha} A \xrightarrow{\beta} A_2
	\end{align}
	where 
	$A_1$ is a direct summand of an object $\Psi(B_1)$
	for some $B_1$
	in the left hand side of (\ref{rest:M}) and
	$A_2$ is a direct summand of an object 
	$\Psi(B_2)$ for some $B_2$ in 
	(\ref{summand:Mp}) for $a=b\geq 2$. 
	Then $\Psi(B_2)$ is an object of (\ref{subcat:Mk}) for $k\geq 2$, 
	hence $\beta$ is a zero map
	by Lemma~\ref{sublem2}. 
	So $\alpha$ is an isomorphism, hence 
	the functor (\ref{rest:M}) has dense image. 
	\end{proof}
	
	We have postponed several lemmas, which are given below: 

\begin{lemma}
    \label{sublem0}
    The $\Ext$-group $\Ext_Q^1(R, R)$ is computed as (\ref{Ext1}). 
\end{lemma}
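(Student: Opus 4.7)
The plan is to reduce the computation to $\Ext^1_Q$ between the pairwise non-isomorphic one-dimensional simples $R^{(i)}$ of the free algebra $\mathbb{C}Q = \mathbb{C}\langle x, y, z \rangle$, and then repackage the multiplicities. By bilinearity of $\Ext$ together with Schur's lemma applied to the multiplicity vector spaces $V^{(i)}$, we have
\begin{align*}
\Ext^1_Q(R, R) = \bigoplus_{i, j = 1}^l \Hom(V^{(i)}, V^{(j)}) \otimes \Ext^1_Q(R^{(i)}, R^{(j)}),
\end{align*}
so only the Ext-groups between simples need to be computed.

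For the simple $R^{(i)}$ associated to the point $x^{(i)} = (x^{(i)}_1, x^{(i)}_2, x^{(i)}_3) \in \mathbb{C}^3$, I would use the standard two-term projective resolution of $R^{(i)}$ over the free algebra $\mathbb{C}Q$,
\begin{align*}
0 \to (\mathbb{C}Q)^{\oplus 3} \xrightarrow{\partial^{(i)}} \mathbb{C}Q \to R^{(i)} \to 0,
\end{align*}
where $\partial^{(i)}(f_1, f_2, f_3) = f_1(x - x^{(i)}_1) + f_2(y - x^{(i)}_2) + f_3(z - x^{(i)}_3)$. Exactness is a direct check using the monomial basis of $\mathbb{C}Q$ after translating $(x, y, z)$ by $(x^{(i)}_1, x^{(i)}_2, x^{(i)}_3)$. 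Applying $\Hom_Q(-, R^{(j)})$ and using the identification $\Hom_Q(\mathbb{C}Q, R^{(j)}) \cong \mathbb{C}$ via $\phi \mapsto \phi(1)$, the coboundary reduces to the linear map
\begin{align*}
\mathbb{C} \to \mathbb{C}^3, \quad s \mapsto \bigl((x^{(j)}_k - x^{(i)}_k) s\bigr)_{k=1,2,3}.
\end{align*}

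When $i = j$ this map is zero, so $\Ext^1_Q(R^{(i)}, R^{(i)}) = \mathbb{C}^3$, one dimension per loop; when $i \neq j$, the hypothesis $x^{(i)} \neq x^{(j)}$ makes the map have rank one, giving $\Ext^1_Q(R^{(i)}, R^{(j)}) = \mathbb{C}^2$. Substituting back into the Schur decomposition yields
\begin{align*}
\Ext^1_Q(R, R) = \bigoplus_{i=1}^l \mathrm{End}(V^{(i)})^{\oplus 3} \oplus \bigoplus_{i \neq j} \Hom(V^{(i)}, V^{(j)})^{\oplus 2},
\end{align*}
as claimed. There is no serious obstacle: this is a routine homological computation for a free algebra. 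The only mild care needed is verifying exactness of the Koszul-type resolution for the non-commutative free algebra (handled by the monomial-basis argument) and tracking that the Ext-multiplicities $3$ and $2$ align correctly with the tensor factors $\mathrm{End}(V^{(i)})$ and $\Hom(V^{(i)}, V^{(j)})$ respectively.
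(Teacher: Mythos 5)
Your proof is correct. The reduction to $\Ext^1_Q$ between the simples $R^{(i)}$ via bilinearity is the same first step the paper takes (implicitly), but you then compute $\Ext^1_Q(R^{(i)}, R^{(j)})$ by writing down the explicit two-term free resolution of a one-dimensional module over $\mathbb{C}\langle x,y,z\rangle$ and taking the cokernel of the coboundary, whereas the paper simply invokes the Euler pairing $\chi_Q(R^{(i)}, R^{(j)})=-2$ together with $\Hom(R^{(i)}, R^{(j)})=\mathbb{C}\delta_{ij}$ and the fact that the path algebra is hereditary, so that $\dim\Ext^1=\dim\Hom-\chi_Q$. Your route is more self-contained (it effectively reproves, in this case, that $\mathbb{C}Q$ has global dimension one and what the Euler form computes), at the cost of checking exactness of the resolution; the paper's route is shorter but leans on standard quiver-representation facts. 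Both give $\mathbb{C}^3$ for $i=j$ and $\mathbb{C}^2$ for $i\neq j$, and the repackaging into $\mathrm{End}(V^{(i)})^{\oplus 3}\oplus\bigoplus_{i\neq j}\Hom(V^{(i)},V^{(j)})^{\oplus 2}$ is handled correctly.
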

\begin{proof}
    By the Euler pairing computation, we have 
    $\chi_Q(R^{(i)}, R^{(j)})=-2$. 
    Since $\Hom(R^{(i)}, R^{(j)})=\mathbb{C} \delta_{ij}$, 
    we have 
    \begin{align*}
        \Ext_Q^1(R^{(i)}, R^{(j)})=\begin{cases} \mathbb{C}^2, & i \neq j, \\
        \mathbb{C}^3, & i=j.
        \end{cases}
    \end{align*}
    Therefore (\ref{Ext1}) holds. 
\end{proof}

\begin{lemma}\label{sublem:1.5}
By replacing the isomorphism (\ref{isom:Xp}) if necessary, 
the identity (\ref{W:Xpd}) holds. 
\end{lemma}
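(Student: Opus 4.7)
The plan is to compute $\Tr W_d$ explicitly on the étale slice $\widehat{\Ext}_Q^1(R,R)/G_p$, isolate the quadratic part in the off-diagonal directions, and then apply a $G_p$-equivariant formal splitting (Morse) lemma to eliminate the remaining cubic cross terms at the cost of changing the étale coordinates, which is exactly the ``replacement'' of the isomorphism (\ref{isom:Xp}) that the statement allows.

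First, choose a $G_p$-equivariant decomposition $\Ext_Q^1(R,R) = V_{\mathrm{diag}} \oplus V_{\mathrm{off}}$, with $V_{\mathrm{diag}} = \mathfrak{gl}(V^{(1)})^{\oplus 3} \oplus \mathfrak{gl}(V^{(2)})^{\oplus 3}$ recording the three loops on each $V^{(i)}$ and $V_{\mathrm{off}}$ a $G_p$-stable complement of the infinitesimal orbit inside $\Hom(V^{(1)}, V^{(2)})^{\oplus 3} \oplus \Hom(V^{(2)}, V^{(1)})^{\oplus 3}$; by Lemma~\ref{sublem0}, $V_{\mathrm{off}}$ is $4 d^{(1)} d^{(2)}$-dimensional. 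Writing the universal deformation of $R$ in $2\times 2$ block form as $X = \mathrm{diag}(A_i + a^{(i)}\,\mathrm{Id})$ with off-diagonal blocks $\alpha, \alpha'$, and similarly $Y$ with blocks $\beta, \beta'$ and $Z$ with blocks $\gamma, \gamma'$, direct expansion of $\Tr X[Y,Z]$ yields
\begin{equation*}
\Tr W_d = \Tr W_{d^{(1)}}(A_1, B_1, C_1) + \Tr W_{d^{(2)}}(A_2, B_2, C_2) + Q + C,
\end{equation*}
where $C$ gathers cubic terms with exactly two off-diagonal factors (coefficients linear in the on-diagonal variables) and $Q$ is the quadratic-in-off-diagonal contribution from the constants $a^{(i)}, b^{(i)}, c^{(i)}$. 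Terms linear in the off-diagonal variables cannot occur, for block-structural reasons.

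Second, analyze $Q$. Setting $\Delta a = a^{(1)} - a^{(2)}$, $\Delta b = b^{(1)} - b^{(2)}$, $\Delta c = c^{(1)} - c^{(2)}$, a short calculation gives
\begin{equation*}
Q = \Delta a \bigl(\Tr(\beta\gamma') - \Tr(\gamma\beta')\bigr) - \Delta b \bigl(\Tr(\alpha\gamma') - \Tr(\alpha'\gamma)\bigr) + \Delta c \bigl(\Tr(\alpha\beta') - \Tr(\alpha'\beta)\bigr).
\end{equation*}
Using the trace pairing to identify $\Hom(V^{(2)}, V^{(1)})^{\vee} \cong \Hom(V^{(1)}, V^{(2)})$ as $G_p$-modules, $Q$ factors through the antisymmetric $3\times 3$ matrix built from $(\Delta a, \Delta b, \Delta c)$; both kernels are spanned by $(\Delta a, \Delta b, \Delta c)$, and a direct check shows these coincide with the infinitesimal $GL(V)$-orbit directions in the off-diagonal. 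Since $x^{(1)} \neq x^{(2)}$, at least one of $\Delta a, \Delta b, \Delta c$ is nonzero, so after descending to $V_{\mathrm{off}}$ the form $Q$ becomes $G_p$-equivariantly non-degenerate. Singling out a nonzero entry in $(\Delta a, \Delta b, \Delta c)$ provides an explicit identification $V_{\mathrm{off}} \cong U \oplus U^{\vee}$ with $U = \Hom(V^{(1)}, V^{(2)}) \oplus \Hom(V^{(2)}, V^{(1)})$ and $Q$ the hyperbolic pairing $q(u,v) = \langle u, v\rangle$.

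Third, apply the $G_p$-equivariant formal splitting (parameterized Morse) lemma on $\widehat{V}_{\mathrm{diag}} \times V_{\mathrm{off}}$: since the Hessian of $f := \Tr W_d$ in the $V_{\mathrm{off}}$-direction at the origin is $Q$, which is non-degenerate and $G_p$-invariant, and since $G_p$ is reductive, one can build degree-by-degree a $G_p$-equivariant formal diffeomorphism $\Phi$ fixing the origin with $f \circ \Phi = f|_{V_{\mathrm{off}}=0} + q$. Because $f|_{V_{\mathrm{off}}=0} = \Tr W_{d^{(1)}} + \Tr W_{d^{(2)}}$, this yields the identity (\ref{W:Xpd}), with $\Phi$ serving as the ``replaced'' étale isomorphism of (\ref{isom:Xp}). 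The main obstacle is in executing this last step: at each degree $k\geq 3$ one must solve a Koszul-type equation $\iota_q \xi = (\text{lower-order data})$ whose solvability uses the non-degeneracy of $q$, and whose $G_p$-equivariance is secured by Reynolds averaging over the reductive $G_p$; this book-keeping is standard but is the most technical part of the argument.
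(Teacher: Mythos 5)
Your proposal is correct, and the computation at its core (block expansion of $\Tr W_d$ on a linear slice, the quadratic form $Q$ with coefficients $\Delta a,\Delta b,\Delta c$, and its identification with the hyperbolic pairing on $U\oplus U^{\vee}$) matches what the paper does; I checked your formula for $Q$ and the identification of its radical with the infinitesimal orbit directions, and both are right. The execution of the last step, however, is genuinely different. The paper first normalizes the two points to $x^{(1)}=(0,0,1)$, $x^{(2)}=(0,0,0)$ (using the symmetries of $\Tr W_d$), takes the slice given by Lemma~\ref{sublem0} in which only two off-diagonal blocks per direction occur, verifies by a tangent-complex computation that the resulting explicit map $\nu$ is \'etale at $\nu(0)$ (this is the point where you implicitly rely on the Luna slice theorem to know your affine-linear slice realizes (\ref{isom:Xp})), and then kills the cross terms by a single explicit $Z$-dependent linear substitution in $A^{(21)},B^{(21)}$, which is manifestly $G_p$-equivariant and invertible over the formal fiber. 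Your route works in general position, which forces you to compute the radical of the cross-product pairing and quotient by the orbit directions, and then to invoke a degree-by-degree equivariant formal Morse lemma. That machinery is more than is needed here: since every monomial of $\Tr W_d$ has even degree in the off-diagonal block variables and total degree three, the function is \emph{exactly} of the form $f_0(x)+Q_x(y)$ with $Q_x$ a family of quadratic forms depending (at most linearly) on the diagonal variables and nondegenerate at $x=0$; hence a single $x$-dependent linear change of the $y$-variables trivializes the family, there are no higher corrections to solve for, and the identification $g(x)=f(x,0)$ that your statement of the splitting lemma presupposes is automatic (the fiberwise critical section is $y=0$). So your argument is correct but heavier; the paper's explicit substitution buys a shorter, visibly equivariant proof, while your version has the merit of not requiring any normalization of the points.
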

\begin{proof}
    We may assume $x^{(1)}=(0, 0, 1)$ and $x^{(2)}=(0, 0, 0)$. 
    We write an element of 
    $\Ext_Q^1(R, R)$ as 
    \begin{align}\label{elm:Ext}
        (X^{(1)}, Y^{(1)}, Z^{(1)}, X^{(2)}, Y^{(2)}, Z^{(2)}, 
        A^{(12)}, A^{(21)}, B^{(12)}, B^{(21)}),
    \end{align}
    where $X^{(i)} \in \mathrm{End}(V^{(i)})$
    and $A^{(ij)} \in \Hom(V^{(i)}, V^{(j)})$. 
    We have the morphism of algebraic stacks
    \begin{align*}
    \nu \colon 
    \widehat{\Ext}_Q^1(R, R)/G_p \to \mathcal{X}_p(d)
    \end{align*}
    which sends (\ref{elm:Ext}) to 
    \begin{align*}
    \left(
    \begin{pmatrix}
X^{(1)} & A^{(12)} \\
A^{(21)} & X^{(2)} \\
\end{pmatrix}, \ 
\begin{pmatrix}
Y^{(1)} & B^{(12)} \\
B^{(21)} & Y^{(2)} \\
\end{pmatrix}, \
\begin{pmatrix}
Z^{(1)}+I & 0 \\
0 & Z^{(2)} \\
\end{pmatrix}
\right) \in \mathfrak{g}^{\oplus 3}. 
    \end{align*}
    Indeed, the above correspondence is 
    $GL(V)$-equivariant using the embedding $G_p \subset GL(V)$, 
    so it determines a morphism $\nu$. 
    Note that $\nu(0)$ corresponds to the 
    polystable $Q$-representation $R=(V^{(1)} \otimes R^{(1)}) \oplus (V^{(2)} \otimes R^{(2)})$, 
    where $R^{(i)}$ corresponds to $\mathcal{O}_{x^{(i)}}$. 
    
    We now explain that the morphism $\nu$ is \'{e}tale at $\nu(0)$. 
    The tangent complex of $\mathcal{X}(d)$
    at $\nu(0)$ is 
    \begin{align*}
        \mathbb{T}_{\mathcal{X}(d)}|_{\nu(0)}=
        \left(\mathrm{End}(V) \to \mathrm{End}(V)^{\oplus 3}   \right), \ 
        \alpha \mapsto (0, 0, [\alpha, u]), \ 
        u=\begin{pmatrix}
I & 0 \\
0 & 0 \\
\end{pmatrix}.
        \end{align*}
        The kernel of the above map is 
        $\mathrm{End}(V^{(1)}) \oplus \mathrm{End}(V^{(2)})$, 
        and the cokernel is $\Ext_Q^1(R, R)$, 
        so the morphism $\nu$ induces a quasi-isomorphism on tangent 
        complexes at $\nu(0)$. 
        
        A straightforward computation shows that 
        \begin{align*}
            \nu^{\ast} \Tr W_d=&
            \mathrm{Tr}(Z^{(1)}[X^{(1)}, Y^{(1)}])
            + \mathrm{Tr}(Z^{(2)}[X^{(2)}, Y^{(2)}]) \\
            &+\mathrm{Tr}(A^{(12)}(B^{(21)}+B^{(21)}Z^{(1)}-Z^{(2)}B^{(21)})) \\
            &+\mathrm{Tr}(B^{(12)}(Z^{(2)}A^{(21)}-A^{(21)}Z^{(1)}-A^{(21)})). 
        \end{align*}
        By the following $G_p$-equivariant variable change
        \begin{align*}
            A^{(21)} \mapsto Z^{(2)}A^{(21)}-A^{(21)}Z^{(1)}-A^{(21)}, \ 
            B^{(21)} \mapsto B^{(21)}+B^{(21)}Z^{(1)}-Z^{(2)}B^{(21)},
        \end{align*}
        we obtain the identity (\ref{W:Xpd}). 
\end{proof}

\begin{lemma}\label{lemma:restS}
    The functor (\ref{Kn:eq}) restricts 
    to the functor (\ref{rest:M}). 
\end{lemma}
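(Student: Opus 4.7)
The plan is to use the weight characterization of the quasi-BPS categories and verify a polytope inclusion on generators, leveraging the coprime-ratio condition to align polytope centers. By \cite[Lemma~2.9]{hls} (and its analogue for $\mathbb{S}_p(d)_w$), it suffices to take a generator of the form $F_i = \mathcal{O}_{\mathcal{X}_{p^{(i)}}(d^{(i)})} \otimes \Gamma_i$, where $\Gamma_i$ is a $GL(V^{(i)})$-representation whose $T(d^{(i)})$-weights lie in the weight polytope $\nabla_{p^{(i)}, w^{(i)}}$, and to check that each $T(d)$-weight appearing in an entry of $\Psi(F_1 \boxtimes F_2)$ lies in $\nabla_{p, w}$. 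Here $\nabla_{p, w}$ is cut out by the inequalities $|\langle \lambda, \chi - w\tau_d\rangle|\leq \tfrac{1}{2}n^p_\lambda$ for all cocharacters $\lambda\colon \mathbb{C}^\ast \to T(d)$, with $n^p_\lambda = \langle \lambda, \Ext^1_Q(R,R)^{\lambda>0}\rangle - \langle \lambda, \mathfrak{g}_p^{\lambda>0}\rangle$, and analogously for the factors.

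First I would identify the $T(d)$-weights appearing in $\Psi(F_1 \boxtimes F_2)$. Since $i\colon \mathcal{U} \hookrightarrow \mathcal{U} \oplus \mathcal{U}^\vee$ is the zero section of the $\mathcal{U}^\vee$-bundle over $\mathcal{U}$ with conormal bundle $\mathcal{U}$, the underlying $\mathbb{Z}/2$-graded coherent sheaf of $i_* p^*(F_1 \boxtimes F_2)$ is $p^*(F_1 \boxtimes F_2) \otimes \Lambda^\bullet \mathcal{U}$, equipped with the Koszul matrix factorization differential built from the quadratic form $q$. Restriction along $j^*$ preserves $T(d)$-characters. Hence every $T(d)$-weight appearing is a sum $\chi = \mu_1 + \mu_2 + \mu_K$ with $\mu_i$ a $T(d^{(i)})$-weight of $\Gamma_i$ and $\mu_K$ a weight of $\Lambda^\bullet U$. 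Because $U = \Hom(V^{(1)}, V^{(2)}) \oplus \Hom(V^{(2)}, V^{(1)})$ is self-dual as a $G_p$-representation, its weight multiset $\mathcal{W}_U$ is symmetric about $0$, and for every cocharacter $\lambda$ one has $|\langle \lambda, \mu_K\rangle| \leq \langle \lambda, \mathcal{W}_U^{\lambda > 0}\rangle$ with $\mu_K$ of trivial $1_d$-weight.

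Next I would verify the polytope inequality. Using (\ref{Ext1}), the numerical identity $n^p_\lambda = n^{(1)}_\lambda + n^{(2)}_\lambda + 2\langle \lambda, \mathcal{W}_U^{\lambda > 0}\rangle$ follows directly. Combining $\mu_i \in \nabla_{p^{(i)}, w^{(i)}}$ (which yields $|\langle \lambda, \mu_i - w^{(i)}\tau_{d^{(i)}}\rangle| \leq \tfrac{1}{2}n^{(i)}_\lambda$) with the bound on $\mu_K$ and the triangle inequality gives
\begin{equation*}
\bigl|\langle \lambda, \chi - w^{(1)}\tau_{d^{(1)}} - w^{(2)}\tau_{d^{(2)}}\rangle\bigr| \leq \tfrac{1}{2}n^{(1)}_\lambda + \tfrac{1}{2}n^{(2)}_\lambda + \langle \lambda, \mathcal{W}_U^{\lambda > 0}\rangle = \tfrac{1}{2}n^p_\lambda.
\end{equation*}
The coprime-ratio condition $w^{(1)}/d^{(1)} = w^{(2)}/d^{(2)} = w/d$ together with $w^{(1)} + w^{(2)} = w$ forces $w^{(1)}\tau_{d^{(1)}} + w^{(2)}\tau_{d^{(2)}} = w\tau_d$, via $\tau_{d^{(i)}} = \sigma_{d^{(i)}}/d^{(i)}$ and $\sigma_d = \sigma_{d^{(1)}} + \sigma_{d^{(2)}}$, so that $\chi \in \nabla_{p, w}$ and $\Psi(F_1 \boxtimes F_2) \in \mathbb{S}_p(d)_w$.

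The main obstacle I anticipate is confirming the precise form of the Knörrer equivalence, namely that it contributes only the weights of $\Lambda^\bullet U$ with no extra determinantal twist. This is settled by the self-duality of $U$, which forces $\det U$ to carry trivial $T(d)$-character, so any normalization by a power of $\det U$ built into the Knörrer functor is harmless and does not perturb the weight computation.
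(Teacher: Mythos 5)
Your proposal is correct and follows essentially the same route as the paper's proof: the weight criterion of \cite[Lemma~2.9]{hls}, the Koszul description of $\Psi(\mathcal{E})$ as generated by $p^{\ast}(\mathcal{E})\otimes\wedge^{\bullet}U$, the self-duality of $U$, and the identity $\tfrac{1}{2}n'_{\lambda,p}+\langle\lambda,U^{\lambda>0}\rangle=\tfrac{1}{2}n_{\lambda,p}$ coming from (\ref{Ext1}). Your extra verifications (the alignment $w^{(1)}\tau_{d^{(1)}}+w^{(2)}\tau_{d^{(2)}}=w\tau_d$ and the triviality of the $T(d)$-character of $\det U$) are points the paper leaves implicit, and they check out.
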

\begin{proof}
(cf. the proof of~\cite[Theorem~2.7]{KoTo})
For a cocharacter $\lambda \colon 
\mathbb{C}^{\ast} \to T(d)$, 
let
\begin{align*}
n_{\lambda, p} :=\left\langle \lambda, 
\mathbb{L}^{\lambda>0}_{\mathcal{X}_p(d)}\big|_{0}
\right\rangle, \
    n_{\lambda, p}':=\left\langle \lambda, 
    \mathbb{L}^{\lambda>0}_{\mathcal{X}_{p^{(1)}}(d^{(1)}) \times 
    \mathcal{X}_{p^{(2)}}(d^{(2)})}\big|_{0} \right\rangle.
\end{align*}
Define the sets of weights
\begin{align*}
    \nabla_{p, w} &:=
    \left\{\chi \in M_{\mathbb{R}} \relmiddle| -\frac{1}{2}
    n_{\lambda, p} \leq \langle \lambda, \chi \rangle 
    \leq \frac{1}{2}n_{\lambda, p} \mbox{ for all } \lambda \right\} +w\tau_d, \\
   \nabla'_{p, w} &:=
    \left\{\chi \in M_{\mathbb{R}}  \relmiddle| -\frac{1}{2}
    n'_{\lambda, p} \leq \langle \lambda, \chi \rangle 
    \leq \frac{1}{2}n'_{\lambda, p} \mbox{ for all } \lambda \right\}+w\tau_d.
\end{align*}
Then an object in the right (resp. left)
hand side of (\ref{Kn:eq})
lies in the right (resp. left)
hand side of (\ref{rest:M})
if and only if its $T(d)$-weights are contained in 
$\nabla_{p, w}$ (resp.~$\nabla_{p, w}'$), see~\cite[Lemma~2.9]{hls}. 
Let $\mathcal{E}$ be an object in the left hand 
side of (\ref{rest:M}). 
Using a Koszul resolution, the complex $\Psi(\mathcal{E})$ is generated by 
$p^{\ast}(\mathcal{E})\otimes \wedge^{\bullet}U$. 
Let $\chi$ be a $T(d)$-weight of 
$p^{\ast}(\mathcal{E})\otimes \wedge^{\bullet}U$. 
We then have that
\begin{align*}
    -\frac{1}{2}n_{\lambda, p}'+\langle \lambda, 
    U^{\lambda<0} \rangle \leq 
    \langle \lambda, \chi \rangle 
    \leq \frac{1}{2}n_{\lambda, p}'+\langle 
    \lambda, U^{\lambda>0} \rangle. 
\end{align*}
Since $U$ is self-dual, we have 
$\langle \lambda, U^{\lambda<0}\rangle=
-\langle \lambda, U^{\lambda>0}\rangle$. 
Moreover, from (\ref{Ext1}), it is easy to see that 
\begin{align*}
\frac{1}{2}n_{\lambda, p}'+\langle \lambda, U^{\lambda>0}\rangle=\frac{1}{2}n_{\lambda, p}. 
    \end{align*}
    Therefore $\Psi(\mathcal{E})$ is an object 
    of the right hand side of (\ref{rest:M}). 
\end{proof}
\begin{lemma}\label{sublem1}
If a $T(d)$-weight $\chi$ satisfies (\ref{wei:chip}), then 
\begin{align}\label{chi:rho}
	\chi+\rho \in \mathbf{W}(d)_w. 
	\end{align}
Conversely if a $GL(V)$-dominant $T(d)$-weight $\chi$ satisfies (\ref{chi:rho}), 
then it satisfies (\ref{wei:chip}). 
\end{lemma}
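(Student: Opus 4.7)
The plan is to compare the two polytope conditions through their support functions on the common set of cocharacters of $T(d) \subset G_p \subset GL(V)$. First, I would decompose $\mathfrak{g} = \mathfrak{g}_p \oplus M$ as $G_p$-representations, where $\mathfrak{g}_p = \mathrm{Lie}(G_p) = \bigoplus_i \mathrm{End}(V^{(i)})$ and $M = \bigoplus_{i \neq j}\Hom(V^{(i)}, V^{(j)})$. Then $R(d) = \mathfrak{g}^{\oplus 3} = 3\mathfrak{g}_p \oplus 3M$, while by \eqref{Ext1} we have $\Ext_Q^1(R, R) = 3\mathfrak{g}_p \oplus 2M$; the two $G_p$-representations differ by a single copy of $M$. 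Setting $P_1 := \textbf{W}_p(d)_w - w\tau_d$ and $P_2 := \textbf{W}(d)_w - w\tau_d$, both are zonotopes centered at the origin (every weight occurs with its negative), so membership reduces to the inequality $|\langle \lambda, v\rangle| \leq h_{P_i}(\lambda)$ against integer cocharacters $\lambda$ of $T(d)$.

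For the forward direction, I compute the two support functions: $h_{P_2}(\lambda) = \frac{3}{2}\langle \lambda, \mathfrak{g}^{\lambda > 0}\rangle$ and $h_{P_1}(\lambda) = \frac{3}{2}\langle \lambda, \mathfrak{g}_p^{\lambda > 0}\rangle + \langle \lambda, M^{\lambda > 0}\rangle$, so that
\[ h_{P_2}(\lambda) - h_{P_1}(\lambda) = \frac{1}{2}\langle \lambda, M^{\lambda > 0}\rangle = \frac{1}{2}\sum_{\alpha \in M^+}|\langle \lambda, \alpha\rangle|, \]
where $M^+ = \{\beta_k^{(j)} - \beta_l^{(i)} : j > i\}$ denotes the set of positive roots of $\mathfrak{g}$ lying outside $\mathfrak{g}_p$. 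Since $\rho - \rho_p = \frac{1}{2}\sum_{\alpha \in M^+}\alpha$ by definition, the triangle inequality gives
\[ |\langle \lambda, \rho - \rho_p\rangle| \leq \frac{1}{2}\sum_{\alpha \in M^+}|\langle \lambda, \alpha\rangle| = h_{P_2}(\lambda) - h_{P_1}(\lambda), \]
which forces $P_1 + (\rho - \rho_p) \subseteq P_2$. The forward implication $\chi + \rho_p \in \textbf{W}_p(d)_w \Rightarrow \chi + \rho \in \textbf{W}(d)_w$ follows with no assumption on $\chi$.

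For the converse, I would invoke the alternative description of $\mathbb{M}(d)_w$ via the weight polytope $\nabla_w$ from \cite[Lemma~2.9]{hls}, and analogously $\nabla_{p, w}$ for the $G_p$-action on $\Ext_Q^1(R, R)$. A short computation using the decompositions above shows $n_\lambda = n_{\lambda, p} = 2\langle \lambda, \mathfrak{g}^{\lambda > 0}\rangle$ for every cocharacter $\lambda$ of $T(d)$: the extra copy of $M$ inside $R(d)$ cancels precisely against the missing piece $M \subset \mathfrak{g}\setminus \mathfrak{g}_p$. Hence $\nabla_w = \nabla_{p, w}$. Since $GL(V)$-dominance of $\chi$ implies $G_p$-dominance, applying \cite[Lemma~2.9]{hls} in both settings yields the chain of equivalences $\chi + \rho \in \textbf{W}(d)_w \iff \chi \in \nabla_w = \nabla_{p, w} \iff \chi + \rho_p \in \textbf{W}_p(d)_w$, proving the converse (and simultaneously recovering the forward direction under dominance).

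The main obstacle will be justifying that \cite[Lemma~2.9]{hls} transfers verbatim to the formal-fiber situation $\mathcal{X}_p(d) \cong \widehat{\Ext}_Q^1(R, R)/G_p$ of \eqref{isom:Xp}: the cited result is stated for a reductive group acting linearly on a vector space rather than on a formal completion, but at its core it is a purely representation-theoretic identification of the $T(d)$-weights appearing in \emph{magic-window} subcategories. It should therefore apply without change to the $G_p$-action on $\Ext_Q^1(R, R)$, provided one uses $\rho_p$ in place of $\rho$, which is precisely what is built into the definition of $\textbf{W}_p(d)_w$.
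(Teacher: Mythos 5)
Your proof is correct, and it splits naturally against the paper's argument. The forward direction is the paper's computation in dual form: the paper writes $\chi+\rho_p$ explicitly as a combination of weights of $\Ext^1_Q(R,R)$ with coefficients in $[0,3/2]$ (block-diagonal part) and $[0,1]$ (off-diagonal part) and adds $\rho-\rho_p=\frac{1}{2}\sum_{a<b}(\beta_i^{(b)}-\beta_j^{(a)})$, while you verify the same containment $P_1+(\rho-\rho_p)\subseteq P_2$ by comparing support functions of the two zonotopes; both versions hold for arbitrary, not necessarily dominant, $\chi$. The converse is where you genuinely diverge: the paper stays inside the zonotope description, invoking \cite[Proposition~3.5]{PT0} to write $\chi+\rho-w\tau_d=\sum_{i>j}c_{ij}(\beta_i-\beta_j)$ with $0\leq c_{ij}\leq 3/2$ and then subtracting $\rho-\rho_p$, whereas you pass through the $\nabla$-description and the identity $n_\lambda=n_{\lambda,p}=2\langle\lambda,\mathfrak{g}^{\lambda>0}\rangle$, i.e. the extra copy of $M=\oplus_{i\neq j}\Hom(V^{(i)},V^{(j)})$ in $\mathfrak{g}^{\oplus 3}$ versus $\Ext^1_Q(R,R)$ cancels against $\mathfrak{g}$ versus $\mathfrak{g}_p$. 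That identity is correct (it is the same cancellation the paper exploits inside the proof of Lemma~\ref{lemma:restS}), and your route has the merit of explaining why the two quasi-BPS conditions agree: the polytopes $\nabla_w$ and $\nabla_{p,w}$ literally coincide.

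One dependence should be made explicit. Of the four implications in your chain you only use two, of unequal depth. The step $\chi\in\nabla_{p,w}\Rightarrow \chi+\rho_p\in\textbf{W}_p(d)_w$ is elementary (the same triangle-inequality argument as your forward direction, using $\langle\lambda,\rho_p\rangle\leq\frac{1}{2}\langle\lambda,\mathfrak{g}_p^{\lambda>0}\rangle$) and needs neither dominance nor a citation. The step that carries the weight is: $\chi$ $GL(V)$-dominant with $\chi+\rho\in\textbf{W}(d)_w$ implies $\chi\in\nabla_w$. Reading \cite[Lemma~2.9]{hls} as a weight-by-weight equivalence is consistent with how the paper itself uses it (including for the $G_p$-action on $\Ext^1_Q(R,R)$ in Lemma~\ref{lemma:restS}), so your citation is defensible; but if one only grants an equality of generated subcategories, this implication does not follow formally. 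It is, however, exactly \cite[Proposition~3.5]{PT0}: from $\chi+\rho-w\tau_d=\sum_{i>j}c_{ij}(\beta_i-\beta_j)$ with $0\leq c_{ij}\leq 3/2$ one gets $\chi-w\tau_d=\sum_{i>j}(c_{ij}-\tfrac{1}{2})(\beta_i-\beta_j)$ with coefficients in $[-\tfrac{1}{2},1]$, hence $|\langle\lambda,\chi-w\tau_d\rangle|\leq\langle\lambda,\mathfrak{g}^{\lambda>0}\rangle=\tfrac{1}{2}n_\lambda$ for all $\lambda$. With that substitution your argument is complete independently of how the citation is read; and your concern about formal completions is immaterial, since $\textbf{W}_p(d)_w$ and $\nabla_{p,w}$ are defined from the linear $G_p$-representation $\Ext^1_Q(R,R)$, not from its completion.
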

\begin{proof}
	If a $T(d)$-weight $\chi$ satisfies (\ref{wei:chip}), it is written as 
	\begin{align}\label{chi:rhop}
		\chi+\rho_p=\sum_{i, j, a} c_{ij}^{(a)}(\beta_i^{(a)}-\beta_j^{(a)})
		+\sum_{i, j, a \neq b} c_{ij}^{(ab)}(\beta_i^{(a)}-\beta_j^{(b)})
		\end{align}
	where $0\leq c_{ij}^{(a)} \leq 3/2$ and $0\leq c_{ij}^{(ab)} \leq 1$. 
	Since we have 
	\begin{align}\label{rho-rhop}
		\rho-\rho_p=\frac{1}{2}\sum_{i, j, a<b}(\beta_i^{(b)}-\beta_j^{(a)}),
		\end{align}
	the weight $\chi+\rho$ satisfies (\ref{chi:rho}). Conversely, if $\chi$ is a 
	$GL(V)$-dominant $T(d)$-weight satisfying (\ref{chi:rho}), from~\cite[Proposition~3.5]{PT0}
	it is written as 
	\begin{align*}
		\chi+\rho=\sum_{i>j} c_{ij}(\beta_i-\beta_j)
		\end{align*}
	for $0\leq c_{ij} \leq 3/2$. 
	Therefore from (\ref{rho-rhop}), the weight $\chi+\rho_p$ is written as (\ref{chi:rhop}). 
	\end{proof}
\begin{lemma}\label{sublem2}
	The subcategory (\ref{subcat:Mk}) for $k\geq 2$ 
	is right orthogonal to 
	$\mathbb{S}_p(d)_{w}$ for $w=w_1+ \cdots+w_k$, where $w_i$
	is given in (\ref{def:wi}). 
	\end{lemma}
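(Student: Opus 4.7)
The plan is to deduce the lemma from the semiorthogonal decomposition ``S'' of $\mathrm{MF}(\mathcal{X}_p(d), \Tr W_d)$ that is tacitly built in the main proof: apply \eqref{sod:mf2} to each factor $\mathrm{MF}(\mathcal{X}_{p^{(j)}}(d^{(j)}), \Tr W_{d^{(j)}})$ for $j=1,2$, take the box product, and transport across the Kn\"orrer equivalence \eqref{Kn:eq}. Using the compatibility \eqref{com:hall} of Kn\"orrer duality with the categorical Hall product for the cocharacter $\lambda$ of \eqref{lambda:12}, the resulting SOD on $\mathrm{MF}(\mathcal{X}_p(d), \Tr W_d)$ has summands exactly of the form \eqref{subcat:Mk}, indexed by compositions $(d_i, v_i)_{i=1}^k$ with $d_i = d_i^{(1)} + d_i^{(2)}$ and strict slope ordering $v_1/d_1 < \cdots < v_k/d_k$.

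Under this description $\mathbb{S}_p(d)_w$ corresponds to the $k=1$ summand, so it suffices to show that this summand sits at the extreme end of ``S'' for which $\Hom(\mathbb{S}_p(d)_w, \mathcal{C}) = 0$ whenever $\mathcal{C}$ is indexed by a composition with $k \geq 2$. In each factor's SOD \eqref{sod:mf2}, the trivial-partition piece $\mathbb{S}_{p^{(j)}}(d^{(j)})_{w^{(j)}}$ sits at the rightmost end. By standard compatibility of SODs with box products and equivalences, the box product of the two rightmost pieces (summed over matched slopes $w^{(1)}/d^{(1)} = w^{(2)}/d^{(2)}$) becomes the rightmost piece of ``S''. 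Under the Hall product this corresponds to the image of the left-hand side of \eqref{rest:M}, namely $\mathbb{S}_p(d)_w$. Any composition with $k \geq 2$ must have $a \geq 2$ or $b \geq 2$ on at least one factor (the matched-slope case with $a=b=1$ already produces $k=1$), so the corresponding summand sits strictly to the left in at least one factor's SOD, and hence strictly to the left in ``S''.

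The main obstacle is the combinatorial bookkeeping: tracking precisely how multi-slope configurations $(d_i^{(1)}, v_i^{(1)})$ and $(d_i^{(2)}, v_i^{(2)})$ combine into a given combined composition $(d_i, v_i)$ with $k \geq 2$, and confirming that at least one of the two factors strictly refines its trivial partition. Should this ordering analysis prove delicate, a backup strategy is to argue by adjunction: apply $(p_\lambda^*, p_{\lambda\ast})$ to reduce $\Hom(A, m_\lambda(\boxtimes_i B_i))$ to a Hom on the attracting locus, compute $q_{\lambda\ast} p_\lambda^* A$ via Proposition \ref{bbw}, and use the polytope conditions \eqref{W}, \eqref{W0} together with the strict slope inequalities $v_1/d_1 < \cdots < v_k/d_k$ to place all resulting $T(d)$-weights outside the window defining $\mathbb{S}_{p_i}(d_i)_{w_i}$ for some $i$, forcing the Hom to vanish.
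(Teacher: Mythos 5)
Your primary strategy is circular. The decomposition ``S'' obtained from \eqref{sod:mf2} lives on the source of the functor $\Psi$ in \eqref{Kn:eq}, i.e.\ on $\mathrm{MF}(\mathcal{X}_{p^{(1)}}(d^{(1)}),\Tr W_{d^{(1)}})\boxtimes\mathrm{MF}(\mathcal{X}_{p^{(2)}}(d^{(2)}),\Tr W_{d^{(2)}})$, and $\Psi$ is only fully faithful with dense image, so it does not transport S to a semiorthogonal decomposition of $\mathrm{MF}(\mathcal{X}_p(d),\Tr W_d)$ in which $\mathbb{S}_p(d)_w$ is a summand. More seriously, your key step ``$\mathbb{S}_p(d)_w$ corresponds to the $k=1$ (rightmost) summand'' is exactly what the proof of Theorem \ref{lem:support} establishes \emph{using} Lemma \ref{sublem2}: at this stage one only knows that the image of \eqref{rest:M} lands \emph{inside} $\mathbb{S}_p(d)_w$ (Lemma \ref{lemma:restS}); the category $\mathbb{S}_p(d)_w$ is defined intrinsically by the weight condition \eqref{wei:chip} at $p$, and the whole point of the lemma is to kill Homs from such a priori unrelated objects to the images of the $k\geq 2$ summands \eqref{subcat:Mk}. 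In addition, your combinatorial reduction is off: a $k\geq 2$ configuration can arise with $a=b=1$ but mismatched slopes $v_1^{(1)}/d_1^{(1)}\neq v_1^{(2)}/d_1^{(2)}$ (this is why the paper pads with $(0,0)$ entries), so ``at least one factor strictly refines its trivial partition'' fails, and the claim that all matched-slope pieces can be placed to the right of all $k\geq 2$ pieces in a product SOD is itself a semiorthogonality assertion of the same kind as the one to be proved.

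Your backup strategy is the route the paper actually takes: by adjunction (as in the semiorthogonality proof the paper cites) the statement reduces to showing that a specific weight has $r$-invariant $>1/2$ for the polytope $\textbf{W}_p(d)$, and this is settled by a weight estimate against the cocharacter $\lambda$ of \eqref{lambda:12}. But as written it is only a sketch, and the substantive points are missing: one must control the correction $\rho-\rho_p$ to pass from the local polytope $\textbf{W}_p(d)$ to $\textbf{W}(d)$ (the content of Lemma \ref{sublem1}), use the identity $\sum_i w_i\tau_{d_i}=\sum_i v_i\tau_{d_i}-\mathfrak{g}^{\lambda>0}$ from \eqref{def:wi}, and then exploit the strict slope inequalities via the partial sums $\tilde v_1+\cdots+\tilde v_l<0$ to contradict the lower bound $\langle\lambda,\chi\rangle\geq-\bigl\langle\lambda,\tfrac{3}{2}\mathfrak{g}^{\lambda>0}\bigr\rangle$ valid for every $\chi\in\textbf{W}(d)_0$. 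None of this is carried out in the proposal, so as it stands the argument has a genuine gap.
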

\begin{proof}
Recall that $w_i=v_i+d_i(\sum_{i>j}d_j-\sum_{j>i}d_j)$. 
Choose weights $\psi_i \in \textbf{W}_{p_i}(d_i)_{0}$ for $1\leq i\leq k$. 
From the proof of semiorthogonality in~\cite[Proposition~4.3]{P}, 
it is enough to show that the $r$-invariant of the weight 
\begin{align}\label{weight:b}
	\sum_{i=1}^k \psi_i+
	\sum_{i=1}^k w_i \tau_{d_i}-\sum_{i=1}^k \rho_{p_i}+\rho_p
	\end{align}	
with respect to the polytope $\textbf{W}_p(d)$
is bigger than $1/2$. 
Here, the polytope $\textbf{W}_p(d)$ is defined as follows: 
\begin{align*}
    \textbf{W}_p(d):=\frac{1}{2} \mathrm{sum}[0, \beta]+\mathbb{R}\tau_d,
    \end{align*}
  where the sum is after all weights in $\Ext_Q^1(R, R)$.  
  
Suppose the contrary, i.e. the weight \eqref{weight:b} lies in 
$\textbf{W}_p(d)_{w}$. Let $\lambda$ be the cocharacter
as in (\ref{lambda:12}), 
and write it as $\lambda=(\lambda^{(1)}, \lambda^{(2)})$, 
where $\lambda^{(j)}$ is the cocharacter of $GL(V^{(j)})$. 
We set $\mathfrak{g}=\mathrm{End}(V)$
and $\mathfrak{g}^{(j)}=\mathrm{End}(V^{(j)})$. 
The weight (\ref{weight:b}) is written as 
\begin{align*}
	\sum_{i=1}^k \psi_i+
	\sum_{i=1}^k v_i \tau_{d_i}
	-\mathfrak{g}^{\lambda>0}-\frac{1}{2}(\mathfrak{g}^{(1)})^{\lambda^{(1)}>0}
	-\frac{1}{2}(\mathfrak{g}^{(2)})^{\lambda^{(2)}>0}. 
	\end{align*}
	By the assumption, the above weight 
	is an element of $\textbf{W}_p(d)_w$. 
	Then using an argument as in the proof of Lemma~\ref{sublem1}, we have that 
\begin{align}\label{lambda0}
\sum_{i=1}^k \psi_i+\sum_{i=1}^k v_i \tau_{d_i}-\frac{3}{2}\mathfrak{g}^{\lambda>0}
	\in \textbf{W}(d)_w. 
	\end{align}
Note that we have 
\begin{align*}
	\left\langle \lambda, 	\sum_{i=1}^k \psi_i+\sum_{i=1}^k v_i \tau_{d_i}-\frac{3}{2}\mathfrak{g}^{\lambda>0}
	-w \tau_d \right\rangle 
	=\sum_{i=1}^k (k+1-i)d_i\left(\frac{v_i}{d_i}-\frac{w}{d}\right)
	-\left\langle \lambda, \frac{3}{2}\mathfrak{g}^{\lambda>0} \right\rangle.
	\end{align*}
	For $1\leq i\leq k$, define \[\tilde{v}_i:=d_i\left(\frac{v_i}{d_i}-\frac{w}{d}\right).\] 
	Then $\tilde{v}_1+\cdots+\tilde{v}_k=0$
	and $\tilde{v}_1+\cdots+\tilde{v}_l<0$ for $1\leq l<k$.
	Therefore 
	\begin{align*}
	    \sum_{i=1}^k (k+1-i)d_i\left(\frac{v_i}{d_i}-\frac{w}{d}\right)
	    =\sum_{l=1}^k \left(\sum_{i=1}^l \tilde{v}_i \right)<0. 	\end{align*}
	It follows that 
	\begin{align}\label{lambda1}
	    \left\langle \lambda, 	\sum_{i=1}^k \psi_i+\sum_{i=1}^k v_i \tau_{d_i}-\frac{3}{2}\mathfrak{g}^{\lambda>0}
	-w \tau_d \right\rangle <-\left\langle \lambda, \frac{3}{2}\mathfrak{g}^{\lambda>0} \right\rangle.
	\end{align}
On the other hand, we claim that for any weight $\chi \in \textbf{W}(d)_{0}$, we have that 
\begin{equation}\label{boundchi}
    \langle \lambda, \chi\rangle \geq -\left\langle \lambda, \frac{3}{2}\mathfrak{g}^{\lambda>0} \right\rangle.
\end{equation}
We thus obtain a contradiction with \eqref{lambda1} and \eqref{lambda0}. 
To prove \eqref{boundchi}, write
$\chi=\sum_{i, j}c_{ij}(\beta_i-\beta_j)$
with $0\leq c_{ij} \leq 3/2$. Then 
\begin{align*}
\langle \lambda, \chi \rangle \geq \sum_{\langle \lambda, \beta_i-\beta_j \rangle<0}
c_{ij}\langle \lambda, \beta_i-\beta_j \rangle 
\geq \frac{3}{2}\sum_{\langle \lambda, \beta_i-\beta_j \rangle<0}
\langle \lambda, \beta_i-\beta_j \rangle= 
-\left\langle \lambda, \frac{3}{2}\mathfrak{g}^{\lambda>0} \right\rangle.
\end{align*}
\end{proof}

\section{Integral generator of equivariant K-theory of quasi-BPS categories}
Recall the graded quasi-BPS category $\mathbb{S}^{\rm{gr}}(d)_v$, 
which is equivalent via Koszul duality to \[\mathbb{T}(d)_v \subset D^b(\mathscr{C}(d)).\]
In~\cite{PT0}, we proved that 
the monomials $[\mathcal{E}_{d_1, v_1}] \ast \cdots \ast [\mathcal{E}_{d_k, v_k}]$
for $v_i/d_i=v/d$
give a basis of the $\mathbb{F}$-vector space
$K_T(\mathbb{T}(d)_v)\otimes_{\mathbb{K}} \mathbb{F}$. 
In this section, we consider the torsion free 
equivariant K-theory defined by 
\begin{align*}
    K_T(\mathbb{T}(d)_v)':=K_T(\mathbb{T}(d)_v)/(\mathbb{K}\text{-torsion}). 
\end{align*}
 It is conjectured by Schiffmann--Vasserot~\cite[Conjecture~7.13]{SV} that $K_T(\mathscr{C}(d))$ is torsion free as a $\mathbb{K}$-module, 
so conjecturally $K_T(\mathbb{T}(d)_v)'$ is isomorphic to 
$K_T(\mathbb{T}(d)_v)$. 
We compute $K_T(\mathbb{T}(d)_v)'$ when $\gcd(d, v)=1$, and when
$(d, v)=(2, 0)$. Theorem~\ref{lem:support} plays a key role 
in the case of $\gcd(d, v)=1$. 

\subsection{Sheaves supported over the small diagonal}\label{ss32}

Let 
\begin{align*}
    \Omega_{\mathscr{C}(d)}[-1]:=
    \Spec \mathrm{Sym}\left(\mathbb{T}_{\mathscr{C}(d)}[1]\right)
\end{align*}
be
the $(-1)$-shifted cotangent of $\mathscr{C}(d)$. 
An object $\mathcal{E} \in D^b(\mathscr{C}(d))$ has singular support~\cite{AG}:
\begin{align*}
    \mathrm{Supp}^{\rm{sg}}(\mathcal{E})
    \subset  (\Omega_{\mathscr{C}(d)}[-1])^{\rm{cl}}. 
\end{align*}
We identify the right hand side with 
$\mathrm{Crit}(\Tr W_d) \subset \mathcal{X}(d)$, 
which is isomorphic to the moduli stack of 
zero-dimensional coherent sheaves on $\mathbb{C}^3$
of length $d$. 
Under the Koszul duality equivalence (\ref{equiv:Phi}), we have by~\cite[Proposition~2.3.9]{T} that
\begin{align*}
    \mathrm{Supp}^{\rm{sg}}(\mathcal{E})
    =\mathrm{Supp}(\Phi(\mathcal{E})). 
\end{align*}
In the following lemma, we show that the 
K-theory class of an object in $D^b_T(\mathscr{C}(d))$ with singular support 
contained in $\pi^{-1}(\Delta)$ has a certain divisibility property. 
The proof is inspired by the proof of wheel conditions 
in~\cite[Theorem~2.9, Corollary~2.10]{Zhao}, \cite[Proposition~2.11]{N2}:
\begin{lemma}\label{lemma:div}
	For any $\mathcal{E} \in D^b_T(\mathscr{C}(d))$ whose singular support is 
	contained in $\pi^{-1}(\Delta)$, the element 
	$i_{\ast}[\mathcal{E}] \in K_T(\mathcal{Y}(d))=
	\mathbb{K}[z_1^{\pm}, \ldots, z_d^{\pm 1}]^{\mathfrak{S}_d}$
	is divisible by 
		\begin{align}\label{divis}
	(q_1-1)^{d-1}(q_2-1)^{d-1}(q_1 q_2-1)^{d-1} \in \mathbb{K}. 
	\end{align}
	\end{lemma}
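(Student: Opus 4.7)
The plan is to combine Koszul duality with a Koszul-resolution computation for the small diagonal $\Delta \hookrightarrow \mathrm{Sym}^d(\mathbb{C}^3)$. Under the Koszul duality equivalence $\Phi$ of \eqref{equiv:Phi}, the singular support hypothesis on $\mathcal{E}$ becomes a genuine set-theoretic support condition on $\Phi(\mathcal{E}) \in \mathrm{MF}^{\mathrm{gr}}_T(\X(d), \Tr W_d)$, namely $\mathrm{Supp}(\Phi(\mathcal{E})) \subset \pi^{-1}(\Delta)$. Via the natural isomorphism $K_T(\X(d)) \cong K_T(\mathcal{Y}(d))$ induced by the affine bundle structure $\X(d) \to \mathcal{Y}(d)$ (projection forgetting the third matrix), the class $i_\ast[\mathcal{E}]$ can be identified, up to a known grading shift (which is a unit in $\mathbb{K}$), with the forget-the-potential class $\Theta([\Phi(\mathcal{E})]) \in K_T(\X(d))$ of \eqref{definition:forgetpotential}.

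Next, I would observe that $\pi^{-1}(\Delta)$ is, by construction, the preimage under the good moduli space morphism $\pi$ of the small diagonal $\Delta \cong \mathbb{C}^3$ inside $\mathrm{Sym}^d(\mathbb{C}^3) \subset X(d)$. In the $\mathfrak{S}_d$-cover $(\mathbb{C}^3)^d$ of $\mathrm{Sym}^d(\mathbb{C}^3)$, the small diagonal is cut out regularly by the $3(d-1)$ linear equations
\[ x_i - x_1 = 0, \quad y_i - y_1 = 0, \quad z_i - z_1 = 0, \quad i=2, \ldots, d,\]
whose $T$-weights are $q_1$, $q_2$, and $q_3 = q_1^{-1}q_2^{-1}$, each occurring with multiplicity $d-1$. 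The Koszul complex of these equations yields
\[ [\mathcal{O}_\Delta] = (1-q_1)^{d-1}(1-q_2)^{d-1}(1-q_3)^{d-1} \in K_T\bigl((\mathbb{C}^3)^d\bigr),\]
which, modulo units in $\mathbb{K}$, equals the claimed divisor $(q_1-1)^{d-1}(q_2-1)^{d-1}(q_1q_2-1)^{d-1}$.

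To transfer this divisibility to $K_T(\X(d))$: since $\Phi(\mathcal{E})$ is supported on $\pi^{-1}(\Delta)$, a dévissage argument places $\Theta([\Phi(\mathcal{E})])$ in the image of the pushforward $K_T(\pi^{-1}(\Delta)) \to K_T(\X(d))$. By the projection formula, this image is contained in the $K_T(\X(d))$-submodule generated by $[\mathcal{O}_{\pi^{-1}(\Delta)}]$, and the previous step shows that the latter is divisible (by a scalar in $\mathbb{K}$) by the claimed product. Pushing forward through the affine bundle to $K_T(\mathcal{Y}(d))$ preserves scalar divisibility, which yields the conclusion.

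The main obstacle is that the good moduli space morphism $\pi$ is not flat, so one cannot directly pull back the Koszul resolution of $\Delta \subset X(d)$ to produce a Koszul resolution of $\pi^{-1}(\Delta)$ in $\X(d)$. To handle this, I would apply the Luna-type \'{e}tale slice theorem at a semisimple point of $\pi^{-1}(\Delta)$ (e.g., a triple of scalar matrices at a chosen point of $\mathbb{C}^3$) to reduce the divisibility to a computation on an explicit \'{e}tale chart where the Koszul resolution is transparent, and then patch using descent for equivariant K-theory. Making this local-to-global step rigorous while simultaneously working with matrix factorizations in place of honest coherent sheaves is the most delicate part of the argument; this is also where the analogy with the wheel-condition proofs of \cite{Zhao} and \cite{N2} is closest.
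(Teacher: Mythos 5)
Your opening reduction is sound and matches the paper's first step: via Koszul duality and the forget-the-potential map, $i_*[\mathcal{E}]$ is identified (up to units) with $\Theta([\Phi(\mathcal{E})])$, and the support hypothesis implies this class restricts to zero on $\X(d)\setminus\pi^{-1}(\Delta)$, hence lies in the image of $G_T(\pi^{-1}(\Delta))\to K_T(\X(d))$. The gap is the next step. The projection formula does \emph{not} show that this image is contained in the ideal generated by $[\mathcal{O}_{\pi^{-1}(\Delta)}]$: it only controls classes of the form $\iota_*\iota^*(-)$, whereas the image is generated by $\iota_*[\mathcal{G}]$ for arbitrary coherent sheaves $\mathcal{G}$ on $\pi^{-1}(\Delta)$ (structure sheaves of smaller substacks, arbitrary twists), and these need not be multiples of $[\mathcal{O}_{\pi^{-1}(\Delta)}]$. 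Moreover, your Koszul computation of $[\mathcal{O}_\Delta]$ lives on the smooth cover $(\mathbb{C}^3)^d$ of $\mathrm{Sym}^d(\mathbb{C}^3)$; since $\pi$ is not flat and $\pi^{-1}(\Delta)$ is not a complete intersection in $\X(d)$ (it sits inside the singular critical locus $\mathrm{Crit}(\Tr W_d)$), the class $[\mathcal{O}_{\pi^{-1}(\Delta)}]\in K_T(\X(d))$ is not computed by that Koszul complex. The proposed repair by Luna slices and descent does not address this: divisibility of an element of $\mathbb{K}[z_1^{\pm 1},\ldots,z_d^{\pm 1}]^{\mathfrak{S}_d}$ by an element of $\mathbb{K}$ is a global statement that cannot be verified on an \'etale neighbourhood of a single closed point \emph{of the support}; what produces divisibility is vanishing on loci \emph{disjoint} from the support.

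This is exactly the ingredient the paper supplies. It restricts $\Theta([\mathcal{F}])$ along explicit $T\times T(d)$-equivariant families $h\colon \mathbb{C}^{d-1}\setminus\{0\}\to \mathrm{Crit}(\Tr W_d)$, $(t_1,\ldots,t_{d-1})\mapsto (0,0,\mathrm{diag}(t_1,\ldots,t_{d-1},0))$, whose image avoids $\pi^{-1}(\Delta)$, so the pullback of the class vanishes; since the localization sequence gives $K_{T\times T(d)}(\mathbb{C}^{d-1}\setminus\{0\})\cong \mathbb{K}[z_1^{\pm 1},\ldots,z_d^{\pm 1}]/((1-q_1^{-1}q_2^{-1})^{d-1})$, this vanishing forces divisibility by $(q_1q_2-1)^{d-1}$, and the factors $(q_1-1)^{d-1}$, $(q_2-1)^{d-1}$ come from the analogous families placed in the other two matrix slots. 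To salvage your approach you would need an argument of this wheel-condition type (vanishing on equivariant test loci in the complement of $\pi^{-1}(\Delta)$ whose K-theory is an explicit quotient of $\mathbb{K}[z_1^{\pm 1},\ldots,z_d^{\pm 1}]^{\mathfrak{S}_d}$), rather than a resolution of $\mathcal{O}_{\pi^{-1}(\Delta)}$.
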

\begin{proof}
By \cite[Lemma 4.9]{PT0}, it is enough to show that, for
	$\mathcal{F} \in \mathrm{MF}_T(\mathcal{X}(d), \Tr W)$ supported
	on $\pi^{-1}(\Delta)$, 
	its image under the forget-the-potential map \eqref{definition:forgetpotential}: 
	\begin{align}\label{elem:F}
		\Theta([\mathcal{F}]):=[\mathcal{F}^{0}]-[\mathcal{F}^1] \in K_T(\mathrm{MF}(\mathcal{X}(d), 0))
		=\mathbb{K}[z_1^{\pm 1}, \ldots, z_d^{\pm d}]^{\mathfrak{S}_d}
	\end{align}
is divisible by $(q_1-1)^{d-1}(q_2-1)^{d-1}(q_1 q_2-1)^{d-1}$. 
We consider the morphism 
\begin{align*}
	h \colon \mathbb{C}^{d-1} \setminus \{0\} &\to \mathrm{Crit}(\Tr W_d) \subset 
	\mathfrak{g}^{\oplus 3}\\
	(t_1, \ldots, t_{d-1}) &\mapsto	\left(0, 0, (t_1, \ldots, t_{d-1}, 0) \right),
	\end{align*}
where $(t_1, \ldots, t_{d-1}, 0)$ is the diagonal matrix. 
Let $T=(\mathbb{C}^{\ast})^2$ act on $\mathbb{C}^{d-1}$ with weight $q_1^{-1}q_2^{-1}$ and 
let the maximal torus $T(d) \subset GL(d)$ act on $\mathbb{C}^{d-1}$ trivially. 
Then the above morphism $h$ is $T \times T(d)$-equivariant, so it induces a 
morphism 
\begin{align*}
    h \colon 
    (\mathbb{C}^{d-1}\setminus \{0\})/(T \times T(d))
    \to \mathfrak{g}^{\oplus 3}/(T \times T(d))
    \to \mathfrak{g}^{\oplus 3}/(T \times GL(d)). 
\end{align*}
By the localization sequence 
\begin{align*}
	K_{T \times T(d)}(\{0\}) \to K_{T \times T(d)}(\mathbb{C}^{d-1})
	\to K_{T \times T(d)}(\mathbb{C}^{d-1} \setminus \{0\}) \to 0,
	\end{align*}
we have an isomorphism 
\begin{align*}
	K_{T \times T(d)}(\mathbb{C}^{d-1} \setminus \{0\}) \cong 
	\mathbb{K}[z^{\pm 1}_1, \ldots, z^{\pm 1}_d]/(1-q_1^{-1}q_2^{-1})^{d-1}. 
	\end{align*}
	Note that we have 
	\begin{align*}
	    \pi \circ h(t_1, \ldots, t_{d-1})=\{(0, 0, t_1), \ldots, (0, 0, t_{d-1}), (0, 0, 0)\} \in 
	    \mathrm{Sym}^d(\mathbb{C}^3), 
	\end{align*}
	in particular the image of $h$ does not intersect $\pi^{-1}(\Delta) \subset 
	\mathrm{Crit}(\Tr W_d)$. 
	We thus have that
\[h^{\ast}(\Theta([\mathcal{F}]))=0\text{ in }K_{T \times T(d)}(\mathbb{C}^{d-1} \setminus \{0\}).\]
Therefore the element (\ref{elem:F}) is divisible by $(q_1 q_2-1)^{d-1}$. 
By replacing $h$ with
\begin{align*}
	(t_1, \ldots, t_{d-1}) &\mapsto (0, (t_1, \ldots, t_{d-1}, 0), 0), \\
		(t_1, \ldots, t_{d-1}) & \mapsto 
	((t_1, \ldots, t_{d-1}, 0), 0, 0),
	\end{align*}
the element (\ref{elem:F}) is also divisible by $(q_1-1)^{d-1}$, $(q_2-1)^{d-1}$, respectively. 
\end{proof}
By combining the above lemma 
with Theorem~\ref{lem:support}, we obtain 
the following: 
\begin{cor}\label{cor:div}
	For any object $\mathcal{E} \in \mathbb{T}_T(d)_v$ with 
	$\gcd(d, v)=1$, the element 
	\begin{align*}
	i_{\ast}[\mathcal{E}] \in K_T(\mathcal{Y}(d))=\mathbb{K}[z_1^{\pm 1}, \ldots, z_d^{\pm 1}]^{\mathfrak{S}_d}
	\end{align*}
	is divisible by (\ref{divis}). 
	In particular, the element 
	$i_{\ast}[\mathcal{E}_{d, v}]$
	in (\ref{elem:E}) for $\gcd(d, v)=1$
	is divisible by (\ref{divis}). 
	\end{cor}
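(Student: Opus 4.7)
The plan is to assemble this corollary from the two main ingredients established immediately before it, namely Theorem~\ref{lem:support} (the categorical support lemma) and Lemma~\ref{lemma:div} (divisibility for sheaves supported on the small diagonal). There is essentially no new computation to carry out; the content lies in reconciling the three avatars of the quasi-BPS category: $\mathbb{T}_T(d)_v \subset D^b_T(\mathscr{C}(d))$, its Koszul dual $\mathbb{S}^{\mathrm{gr}}_T(d)_v$ of graded matrix factorizations, and the ungraded category $\mathbb{S}_T(d)_v$ in which the support condition is formulated.

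First I would fix $\mathcal{E} \in \mathbb{T}_T(d)_v$ and apply the Koszul duality equivalence~(\ref{equiv:Phi}) to obtain $\Phi(\mathcal{E}) \in \mathbb{S}^{\mathrm{gr}}_T(d)_v$, which in particular gives an object of $\mathbb{S}_T(d)_v$ after forgetting the grading. By~\cite[Proposition~2.3.9]{T}, quoted at the beginning of Subsection~\ref{ss32}, the singular support of $\mathcal{E}$ coincides with the support of $\Phi(\mathcal{E})$ under the identification of $(\Omega_{\mathscr{C}(d)}[-1])^{\mathrm{cl}}$ with $\mathrm{Crit}(\Tr W_d) \subset \mathcal{X}(d)$. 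Since $\gcd(d,v)=1$, Theorem~\ref{lem:support} applies and gives that $\Phi(\mathcal{E})$ is supported on $\pi^{-1}(\Delta)$, hence $\mathrm{Supp}^{\mathrm{sg}}(\mathcal{E}) \subset \pi^{-1}(\Delta)$. Lemma~\ref{lemma:div} then yields the desired divisibility of $i_{\ast}[\mathcal{E}]$ by $(q_1-1)^{d-1}(q_2-1)^{d-1}(q_1 q_2-1)^{d-1}$.

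For the second assertion, about the explicit complex $\mathcal{E}_{d,v}$ of Definition~\ref{definition:Edw}, I would simply recall from~\cite[Lemma~4.3]{PT0} that $\mathcal{E}_{d,v} \in \mathbb{T}_T(d)_v$, so the general statement just proved applies. The shuffle formula~(\ref{elem:E}) already exhibits two of the three factors $(1-q_1^{-1})^{d-1}(1-q_2^{-1})^{d-1}$ (up to units in $\mathbb{K}$), and the corollary provides the non-obvious third factor $(q_1 q_2 - 1)^{d-1}$, whose presence is not visible from the shuffle description of $A_{d,v}$.

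The only potential subtlety is bookkeeping: one must make sure the singular support formalism of~\cite{AG, T} is compatible with $T$-equivariance (it is, since the $T$-action preserves the grading) and that Theorem~\ref{lem:support}, stated for $\mathbb{S}(d)_w$, applies to $\mathbb{S}^{\mathrm{gr}}_T(d)_v$ after forgetting the grading and the $T$-action, which is built into the ``same result holds for $\mathbb{S}^{\bullet}_{\ast}(d)_w$'' clause in the statement of Theorem~\ref{lem:support}. Beyond this, the argument is a clean concatenation of the two preceding results and requires no further work.
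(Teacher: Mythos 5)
Your proposal is correct and coincides with the paper's argument: the corollary is obtained exactly by combining Theorem~\ref{lem:support} (applied via the Koszul equivalence and the identification of singular support with the support of the matrix factorization) with Lemma~\ref{lemma:div}, and the claim about $\mathcal{E}_{d,v}$ follows since $\mathcal{E}_{d,v}\in\mathbb{T}_T(d)_v$. Your bookkeeping remarks on equivariance and the graded/ungraded versions are precisely what the ``same result holds for $\mathbb{S}^{\bullet}_{\ast}(d)_w$'' clause of Theorem~\ref{lem:support} is there for, so nothing further is needed.
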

	\begin{remark}
	It is not clear from the expression (\ref{elem:E}) that $i_{\ast}[\mathcal{E}_{d, v}]$ is divisible 
	by (\ref{divis}) when $\gcd(d, v)=1$. 
	Further, the condition $\gcd(d, v)=1$ is necessary 
	for the above divisibility. 
	Indeed, for $d=2$, a direct computation shows that 
	\begin{align*}
	    i_{\ast}[\mathcal{E}_{2, 0}]&=
	    (1-q_1^{-1})(1-q_2^{-1})(1-q_1^{-1}-q_2^{-1}-q_1^{-1}q_2^{-1}+z_1^{-1}z_2+z_1 z_2^{-1}), \\
	    i_{\ast}[\mathcal{E}_{2, 1}]&=
	    (1-q_1^{-1})(1-q_2^{-1})(1-q_1^{-1}q_2^{-1})(z_1+z_2). 
	\end{align*}
	The element $i_{\ast}[\mathcal{E}_{2, 0}]$
	is not divisible by (\ref{divis}). 
	In particular, the singular support of $\mathcal{E}_{2, 0}$ is not included in 
	$\pi^{-1}(\Delta)$. 
	\end{remark}

\subsection{Integral generator for the coprime case}\label{ss33}
Recall that $K_T(\mathbb{T}(d)_v)$ is expected to be torsion 
free as a $\mathbb{K}$-module. 
We also expect that $K_T(\mathbb{T}(d)_v)$ is freely generated by 
$[\mathcal{E}_{d, v}]$ if $\gcd(d, v)=1$. 
The following theorem, which is an application of Corollary~\ref{cor:div}, 
gives evidence towards this expectation. 

\begin{thm}
\label{thm:generatordv}
Consider a pair $(d, v)\in \mathbb{N}\times\mathbb{Z}$ with $\gcd(d, v)=1$. 
There is an isomorhism of $\mathbb{K}$-modules: 
	\begin{align*}
		K_T(\mathbb{T}(d)_v)' \cong\mathbb{K}[\mathcal{E}_{d, v}]. 
	\end{align*}
	\end{thm}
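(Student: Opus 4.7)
The plan is to bootstrap the integral statement from the localized K-theory computation of \cite{PT0}, combined with the divisibility established in Corollary~\ref{cor:div}, by reducing to a primitivity claim for the shuffle-algebra representative of $[\mathcal{E}_{d,v}]$. First, by \cite[Theorem 1.2]{PT0}, the $\mathbb{F}$-vector space $K_T(\mathbb{T}(d)_v)_{\mathbb{F}}$ is spanned by monomials $[\mathcal{E}_{d_1,v_1}] \ast \cdots \ast [\mathcal{E}_{d_k,v_k}]$ with $\sum d_i = d$ and $v_i/d_i = v/d$. When $\gcd(d,v)=1$, the slope condition forces each pair $(d_i,v_i)$ to be a positive integer multiple of $(d,v)$; combined with $\sum d_i = d$ and $d_i \geq 1$, this leaves only the length-one monomial $[\mathcal{E}_{d,v}]$. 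Hence $K_T(\mathbb{T}(d)_v)_{\mathbb{F}} = \mathbb{F} \cdot [\mathcal{E}_{d,v}]$, and since $K_T(\mathbb{T}(d)_v)'$ is torsion-free, any class there is uniquely of the form $h \cdot [\mathcal{E}_{d,v}]$ with $h \in \mathbb{F}$; it suffices to show $h \in \mathbb{K}$.

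Next, apply $i_* \colon K_T(\mathbb{T}(d)_v) \to \mathbb{K}[z_1^{\pm 1}, \ldots, z_d^{\pm 1}]^{\mathfrak{S}_d}$. By Corollary~\ref{cor:div}, both $i_*[\mathcal{F}]$ and $i_*[\mathcal{E}_{d,v}]$ are divisible by $P := (q_1-1)^{d-1}(q_2-1)^{d-1}(q_1 q_2-1)^{d-1}$ inside $\mathbb{K}[z_1^{\pm 1}, \ldots, z_d^{\pm 1}]^{\mathfrak{S}_d}$. Writing $i_*[\mathcal{F}] = P g$ and $i_*[\mathcal{E}_{d,v}] = P g_0$ with $g, g_0 \in \mathbb{K}[z_1^{\pm 1}, \ldots, z_d^{\pm 1}]^{\mathfrak{S}_d}$, the identity $i_*[\mathcal{F}] = h \cdot i_*[\mathcal{E}_{d,v}]$ reduces to $g = h g_0$ in the Laurent polynomial ring with coefficients in $\mathbb{F}$. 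From the explicit formula (\ref{elem:E}) for $i_*[\mathcal{E}_{d,v}]$ and the definition (\ref{def:Ambullet2}) of $A_{d,v}$, one has $g_0 = A_{d,v}/(q_1 q_2-1)^{d-1}$ up to a unit of $\mathbb{K}$.

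The proof is complete once $g_0$ is shown to be primitive over $\mathbb{K}$, in the sense that the gcd of its coefficients as a Laurent polynomial in $z_1,\ldots,z_d$ is a unit in $\mathbb{K}$: the identity $g = h g_0 \in \mathbb{K}[z_1^{\pm 1}, \ldots, z_d^{\pm 1}]^{\mathfrak{S}_d}$ then forces $h \in \mathbb{K}$. To verify primitivity, I would isolate a well-chosen extremal monomial $z_1^{n_1}\cdots z_d^{n_d}$ and compute its coefficient in $A_{d,v}$ directly from the symmetrization (\ref{def:Ambullet2}): natural candidates arise by selecting the leading term $1$ from each geometric series $\sum_{n\geq 0} q^{-n}(z_i^{-1}z_{i+1})^n$ in the denominator, together with a corner of the factor $\prod_{j>i}\xi(z_i z_j^{-1})$, so that only a controlled set of permutations contributes.

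The hard part will be this primitivity claim: the symmetrization combined with the poles of $\xi(z_i z_j^{-1})$ along $z_i = z_j$ produces many canceling contributions, and the candidate extremal monomial must be chosen so that exactly the predicted coefficient (equal to $(q_1 q_2-1)^{d-1}$ up to a unit of $\mathbb{K}$) survives in $A_{d,v}$. A residue-style computation seems the most natural approach. Once primitivity is established, the theorem follows from the preceding steps.
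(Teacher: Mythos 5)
Your reduction coincides with the paper's: identify $K_T(\mathbb{T}(d)_v)'$ with the image of $i_*$ via \eqref{isom:S}, use $\gcd(d,v)=1$ to see that the localized K-theory is spanned by the single monomial $[\mathcal{E}_{d,v}]$ (by \cite[Theorem~4.12]{PT0}), invoke Corollary~\ref{cor:div} to write $i_*[\mathcal{F}]=Pg$ and $i_*[\mathcal{E}_{d,v}]=Pg_0$ with $P=(q_1-1)^{d-1}(q_2-1)^{d-1}(q_1q_2-1)^{d-1}$, and reduce everything to the claim that $g_0$, equivalently $E:=A_{d,v}/(q_1q_2-1)^{d-1}$, is not divisible by any non-unit of $\mathbb{K}$. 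Up to that point the argument is correct and is essentially the first third of the paper's proof.

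The problem is that this primitivity claim is the actual content of the theorem, and you do not prove it: you only propose to ``isolate a well-chosen extremal monomial'' by a ``residue-style computation'' and explicitly defer the hard part. In the paper this step occupies most of the proof and is not a formal extraction of a coefficient: one rewrites the symmetrization over a common Vandermonde-type denominator as in \eqref{sym:m2}, specializes $z_i=q_1^i$ and $z_i=q_2^i$ to constrain any common prime factor to lie (up to units) in $\{q_1-q_2,\,q_1q_2-1\}$ (using the lists \eqref{zeroesq1}, \eqref{zeroesq2}), checks that $q_1q_2-1$ occurs with multiplicity exactly $d-1$, and then — the genuinely nontrivial point — excludes $q_1-q_2$. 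That exclusion is not done on $A_{d,v}$ directly: it uses Negu\c{t}'s identity $P_{d,k}=E_{d,k}$ from \cite{N}, which is valid precisely because $\gcd(d,v)=1$, together with the isomorphism $\mathcal{S}_{\mathbb{F}}\cong\mathcal{S}'_{\mathbb{F}}$, to replace $A_{d,v}$ by the alternative representative $y$ of \eqref{sym:mm}, which is then specialized at $z_i=q^{-i}$. Your sketch gives no argument that a single extremal coefficient is controlled (after symmetrizing and clearing the poles along $z_i=z_j$, each monomial coefficient receives contributions from many permutations, and you would need that coefficient to equal a unit times $(q_1q_2-1)^{d-1}$ exactly), nor does it indicate where coprimality enters this computation beyond the earlier steps — and it must enter, as the paper's detour through $P_{d,k}=E_{d,k}$ shows. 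So the proposal stops exactly where the real work begins, and as it stands it has a genuine gap rather than a complete proof.
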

\begin{proof}
By the isomorphism (\ref{isom:S}), 
the $\mathbb{K}$-module $K_T(\mathbb{T}(d)_v)'$
is isomorphic to the image of 
\begin{align*}
	i_{\ast} \colon K_T(\mathbb{T}(d)_v) \to \mathbb{K}[z_1^{\pm 1}, \ldots, z_d^{\pm}]^{\mathfrak{S}_d}.
	\end{align*}
	It is enough to show that the image of the 
	above morphism 
is generated by $i_{\ast}[\mathcal{E}_{d, v}]$ as a $\mathbb{K}$-module. 
By Corollary~\ref{cor:div}, we have 
\begin{align*}
	\mathrm{Im}(i_{\ast}) \subset (q_1-1)^{d-1}(q_2-1)^{d-1}(q_1 q_2-1)^{d-1}
	\mathbb{K}[z_1^{\pm 1}, \ldots, z_d^{\pm 1}]^{\mathfrak{S}_d}. 
	\end{align*}
Moreover, $\mathrm{Im}(i_{\ast})\otimes_{\mathbb{K}} \mathbb{F}$ is
generated by $i_{\ast}[\mathcal{E}_{d, v}]$ over $\mathbb{F}$
by~\cite[Theorem~4.12]{PT0}. 
It is thus enough to show that $i_{\ast}[\mathcal{E}_{d, v}]$ is written as 
\begin{align*}
	i_{\ast}[\mathcal{E}_{d, v}]
	=(q_1-1)^{d-1}(q_2-1)^{d-1}(q_1 q_2-1)^{d-1} \cdot E,
	\end{align*}
where $E$ is not divisible by a non-unit element in $\mathbb{K}$. 

By (\ref{elem:E}), we have that
\begin{align}\label{sym:m}
\mathrm{Sym}\left(\frac{z_1^{m_1}\ldots z_d^{m_d}}{(1-q^{-1}z_1^{-1}z_2)\cdots 
	(1-q^{-1}z_{d-1}^{-1}z_d)} 
\cdot \prod_{j>i}\xi(z_i z_j^{-1})\right)=(q_1 q_2-1)^{d-1} \cdot E.
\end{align}
By setting 
\begin{multline}\label{def:fd}
	f_d(z_1, \ldots, z_d):=\prod_{i=1}^{d-1}(z_{i+1}-q_1^{-1}z_i)(z_{i+1}-q_2^{-1}z_i) \cdot\\
		\prod_{j>i+1}(z_j-q_1^{-1}z_i)(z_j-q_2^{-1}z_i)(z_j-q z_i),
\end{multline}
we can write (\ref{sym:m}) as 
\begin{align}\label{sym:m2}
	(-q)^{-\frac{1}{2}(d-1)(d-2)}(z_1 \cdots z_d)^{2-d} 
	\cdot \mathrm{Sym}\left(\frac{z_1^{m_1}\cdots z_{d-1}^{m_{d-1}}z_d^{m_d-1} f_d(z_1, \ldots, z_d)}{\prod_{j>i}(z_j-z_i)}   \right). 
	\end{align}
Plug in $z_i=q_1^i$ for $1\leq i\leq d$ in the formula \eqref{sym:m2}. The only non-zero term in the sum above corresponds to the identity permutation. The factors of this term not in $\mathbb{K}^*$ divide 
\begin{equation}\label{zeroesq1}
q_1^{a+1}-1,\, q_1^a-q_2^{-1},\text{ or }q_1^a-q_2\text{ for some }a\geq 1.
\end{equation}
Next plug in $z_i=q_2^i$ for $1\leq i\leq d$ in the formula \eqref{sym:m2}. The only non-zero term in the sum of \eqref{sym:m2} corresponds to the identity permutation. The factors of this term not in $\mathbb{K}^*$ divide 
\begin{equation}\label{zeroesq2}
q_2^{a+1}-1,\, q_2^a-q_1^{-1},\text{ or }q_2^a-q_1\text{ for some }a\geq 1.
\end{equation}
The only factors which divide terms in both sets \eqref{zeroesq1} and \eqref{zeroesq2} are $q_1-q_2$ and $q_1q_2-1$. The factor $q_1q_2-1$ appears with multiplicity $d-1$ as it corresponds to $z_{i+1}-q_2^{-1}z_i$ for $1\leq i\leq d-1$.

It suffices to show that $q_1-q_2$ does not divide \eqref{sym:m2}. 
We will be using computations from \cite[Section 2]{N}. Note that $q_1, q_2, q$ from our paper correspond to $q_1^{-1}, q_2^{-1}, q^{-1}$ in loc. cit. Further, the weight $v\in \mathbb{Z}$ corresponds to $k\in\mathbb{Z}$ in loc. cit.
By \cite[Equation (2.35)]{N}, the equality $P_{d, k}=E_{d, k}$ for $\gcd(d, k)=1$ in loc. cit., see \cite[Equations (2.6) and (2.35)]{N}, and the isomorphism between shuffle algebras $\mathcal{S}_\mathbb{F}\xrightarrow{\sim}\mathcal{S}'_\mathbb{F}$, we can write 
\[(q_1q_2-1)^{d-1}\cdot E=y+t
\text{ in }\mathcal{S}\otimes_{\mathbb{K}}\mathbb{K}\left[\frac{1}{1-q_2^{-1}}, \frac{1}{1-q^{-1}}\right]
\] for $t$ a $\mathbb{K}$-torsion element and 
\begin{equation}\label{sym:mm}
y:=\frac{(1-q_2^{-1})(1-q)^v}{(1-q_2^{-1})^v(1-q^{-1})}\cdot
\mathrm{Sym}\left(\frac{z_1^{m_1}\ldots z_d^{m_d}}{(1-q_2z_1^{-1}z_2)\cdots 
	(1-q_2z_{d-1}^{-1}z_d)} 
\cdot \prod_{j>i}\xi(z_i z_j^{-1})\right).
\end{equation}
It suffices to show that $q_1-q_2$ does not divide $y$.
By setting 
\begin{multline}\label{def:gd}
	g_d(z_1, \ldots, z_d):=\prod_{i=1}^{d-1}(z_{i+1}-q_1^{-1}z_i)(z_{i+1}-qz_i) \cdot\\
		\prod_{j>i+1}(z_j-q_1^{-1}z_i)(z_j-q_2^{-1}z_i)(z_j-q z_i),
\end{multline}
we can write the $\mathrm{Sym}(-)$ term of $y$ as 
\begin{align}\label{sym:mm2}
	(-q_2)^{-\frac{1}{2}(d-1)(d-2)}(z_1 \cdots z_d)^{2-d} 
	\cdot \mathrm{Sym}\left(\frac{z_1^{m_1}\ldots z_{d-1}^{m_{d-1}}z_d^{m_d-1} g_d(z_1, \ldots, z_d)}{\prod_{j>i}(z_j-z_i)}   \right). 
	\end{align}
	It suffices to show that $q_1-q_2$ does not divide \eqref{sym:mm2}.
	Let $z_i=q^{-i}$ for $1\leq i\leq d$. The only non-zero term in the sum of \eqref{sym:mm2} corresponds to the identity permutation. The factors of this term not in $\mathbb{K}^*$ divide
	\[q^{-a-1}-1, q^{-a}-q_1^{-1}, q^{-a}-q_1\text{ for some }a\geq 1.\] 
	None of these polynomials is divisible by $q_1-q_2$, and the conclusion thus follows. 
\end{proof}

\subsection{Integral generator for \texorpdfstring{$K_T(\mathbb{T}(2)_0)'$}{T2}}
\label{subsection:integral20}
The computation of $K_T(\mathbb{T}(d)_v)'$
is more subtle when $\gcd(d, v)>1$, 
since the monomials $[\mathcal{E}_{d_1, v_1}] \ast \cdots \ast [\mathcal{E}_{d_k, v_k}]$
for $v_i/d_i=v/d$
do not generate it over $\mathbb{K}$,
see Remark~\ref{rmk:overK}. We need to find
other objects giving $\mathbb{K}$-basis of 
$K_T(\mathbb{T}(d)_v)'$. 
Here we give a computation for $(d, v)=(2, 0)$. 

Let $V$ be a two-dimensional vector space and let
$\mathfrak{sl} \subset \mathfrak{g}=\mathrm{End}(V)$ be its 
traceless part. Note that 
\begin{align*}
	[\mathfrak{sl}]=1+z_1^{-1}z_2+z_1 z_2^{-1} \in K(BGL(2))=\mathbb{Z}[z_1^{\pm 1}, z_2^{\pm 1}]^{\mathfrak{S}_2}. 
	\end{align*}
The structure sheaf of the classical truncation $\mathscr{C}(2)^{\rm{cl}}$ 
fits into the exact sequence 
\begin{align}\label{exact:1}
	0 \to \mathcal{O}_{\mathcal{Y}(2)}(q_1^{-1}q_2^{-2}) \oplus \mathcal{O}_{\mathcal{Y}(2)}(q_1^{-2}q_2^{-1}) \stackrel{A}{\to} \mathfrak{sl} \otimes \mathcal{O}_{\mathcal{Y}(2)}(q_1^{-1}q_2^{-1}) \stackrel{B}{\to} \mathcal{O}_{\mathcal{Y}(2)} \to 
	\mathcal{O}_{\mathscr{C}(2)^{\rm{cl}}} \to 0.
\end{align}
Here over $(X, Y) \in \mathfrak{g}^{\oplus 2}$, the maps $A$, $B$ are given by 
\begin{align*}
	A|_{(X, Y)}=(2X-\Tr X \cdot I, 2Y-\Tr Y \cdot I), \ 
	B|_{(X, Y)}(Z)=\Tr(Z[X, Y]). 
	\end{align*}
We set $M_1:=\mathcal{O}_{\mathscr{C}(2)^{\rm{cl}}}(q_1 q_2)$. 
We also define 
a coherent sheaf $M_2$ on $\mathscr{C}(2)^{\rm{cl}}$ by the exact sequence 
\begin{align}\label{exact:2}
		0 \to \mathcal{O}_{\mathcal{Y}(2)}(q_1^{-1}q_2^{-1}) \stackrel{B^{\vee}}{\to} \mathfrak{sl} \otimes \mathcal{O}_{\mathcal{Y}(2)} \stackrel{A^{\vee}}{\to}
		 \mathcal{O}_{\mathcal{Y}(2)}(q_1) \oplus 
	\mathcal{O}_{\mathcal{Y}(2)}(q_2) \to M_2 \to 0. 
	\end{align}
	The sequences (\ref{exact:1}), (\ref{exact:2}) 
	are the Eagon-Northcott complex and the Buchsbaum-Rim complex
	associated with $A^{\vee} \colon \mathfrak{sl} \otimes \mathcal{O}_{\mathcal{Y}(2)}
	\to \mathcal{O}_{\mathcal{Y}(2)}(q_1) \oplus 
	\mathcal{O}_{\mathcal{Y}(2)}(q_2)$, 
	respectively. In particular they are exact, see~\cite[Theorem~A2.10]{Ebud}.
From the exact sequences (\ref{exact:1}), (\ref{exact:2}),
we have $M_1, M_2 \in \mathbb{T}(2)_0$. 
\begin{prop}\label{prop:20}
	As a $\mathbb{K}$-module, we have  
	\begin{align}\label{KT(2)}
		K_T(\mathbb{T}(2)_0)'=
		\mathbb{K}[M_1] \oplus \mathbb{K}[M_2]. 
		\end{align}
	\end{prop}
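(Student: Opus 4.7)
The plan is to analyze $K_T(\mathbb{T}(2)_0)'$ via the map $i_* \colon K_T(\mathbb{T}(2)_0) \to K_T(\mathcal{Y}(2)) = \mathbb{K}[z_1^{\pm 1}, z_2^{\pm 1}]^{\mathfrak{S}_2}$, which becomes an embedding after modding out by torsion by \eqref{isom:S}. The argument will have three steps: (i) check $M_1, M_2 \in \mathbb{T}(2)_0$; (ii) compute $i_*[M_1], i_*[M_2]$ and verify $\mathbb{K}$-linear independence; and (iii) upgrade this to an integral equality.

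For step (i), I first check that the dominant $T(2)$-weights $\chi$ with $\chi + \rho \in \textbf{W}(2)_0$ are exactly $\chi = 0$ (with $\Gamma(\chi) = \mathbb{C}$) and $\chi = -\beta_1 + \beta_2$ (with $\Gamma(\chi) = \mathfrak{sl}$); every term appearing in \eqref{exact:1} and \eqref{exact:2} is a $T$-twist of $\mathcal{O}_{\mathcal{Y}(2)} \otimes \Gamma(\chi)$ for one of these two $\chi$, so $i_* M_1, i_* M_2 \in \widetilde{\mathbb{T}}(2)_0$ and hence $M_1, M_2 \in \mathbb{T}(2)_0$. For step (ii), the same exact sequences yield
\begin{align*}
i_*[M_1] &= q_1 q_2 - 1 + q_1^{-1} + q_2^{-1} - (z_1 z_2^{-1} + z_1^{-1} z_2), \\
i_*[M_2] &= q_1 + q_2 - 1 + q_1^{-1} q_2^{-1} - (z_1 z_2^{-1} + z_1^{-1} z_2).
\end{align*}
A putative relation $a \cdot i_*[M_1] + b \cdot i_*[M_2] = 0$ forces $a + b = 0$ from the coefficient of $z_1 z_2^{-1}$; the $z$-free part then reduces to $a(q_1 q_2 - q_1 - q_2 + q_1^{-1} + q_2^{-1} - q_1^{-1} q_2^{-1}) = 0$, and since the bracketed element is non-zero in $\mathbb{K}$, we conclude $a = 0$.

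Step (iii) is the main obstacle. By \cite[Theorem 4.12]{PT0}, $K_T(\mathbb{T}(2)_0)_{\mathbb{F}}$ is two-dimensional with $\mathbb{F}$-basis $\{[\mathcal{E}_{2,0}], [\mathcal{E}_{1,0}] \ast [\mathcal{E}_{1,0}]\}$; combined with step (ii), this places $\mathbb{K}[M_1] \oplus \mathbb{K}[M_2]$ as a full-rank sublattice of $K_T(\mathbb{T}(2)_0)'$. To upgrade to equality, the plan is first to use the shuffle formula \eqref{elem:E} for $i_*[\mathcal{E}_{2,0}]$ and compute $i_*([\mathcal{E}_{1,0}] \ast [\mathcal{E}_{1,0}])$ via \eqref{def:shuffle}, then express both as explicit $\mathbb{K}$-linear combinations of $i_*[M_1], i_*[M_2]$, and finally show that every class in $\mathrm{Im}(i_*)'$ is a $\mathbb{K}$-linear combination of $i_*[M_1], i_*[M_2]$. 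This last part is delicate: the image of $i_*$ is not a priori generated by bundles with weights only in $\{0, -\beta_1+\beta_2\}$, and one must account for the Koszul-type relations from the closed immersion $\mathscr{C}(2) \hookrightarrow \mathcal{Y}(2)$, which are precisely what \eqref{exact:1} and \eqref{exact:2} encode.
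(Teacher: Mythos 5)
Your steps (i) and (ii) are fine and coincide with what the paper does: the only dominant weights with $\chi+\rho\in\textbf{W}(2)_0$ are $0$ and $\beta_2-\beta_1$, the exact sequences \eqref{exact:1}, \eqref{exact:2} show $M_1,M_2\in\mathbb{T}(2)_0$ and give $[M_1]=q_1q_2+q_1^{-1}+q_2^{-1}-\mathfrak{sl}$, $[M_2]=q_1^{-1}q_2^{-1}+q_1+q_2-\mathfrak{sl}$, and $\mathbb{F}$-linear independence follows. The problem is step (iii): what you call "the delicate last part" is exactly the content of the proposition, and your proposal stops at announcing a plan rather than giving an argument. Knowing that $[\mathcal{E}_{2,0}]$ and $[\mathcal{E}_{1,0}]\ast[\mathcal{E}_{1,0}]$ are $\mathbb{K}$-combinations of $[M_1],[M_2]$ only shows that $\mathbb{K}[M_1]\oplus\mathbb{K}[M_2]$ has full rank; it cannot give the reverse containment, because the transition matrix has non-unit determinant $-q_1^{-1}q_2^{-1}(1+q_1^{-1})(1+q_2^{-1})$ (cf.\ Remark~\ref{rmk:overK}), so the monomials in the $\mathcal{E}$'s do not generate $\mathrm{Im}(i_*)$ over $\mathbb{K}$. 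Your appeal to "Koszul-type relations from the closed immersion" is not a substitute: the issue is to characterize which symmetric Laurent polynomials lie in $\mathrm{Im}(i_\ast)$, and integrality of the $z$-coefficients alone is not enough.

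The missing idea, which is how the paper closes the argument, is the wheel condition. Writing an arbitrary element of $\mathrm{Im}(i_\ast)$ as $a_1[M_1]+a_2[M_2]$ with $a_1,a_2\in\mathbb{F}$, membership in $\mathbb{K}[z_1^{\pm1},z_2^{\pm1}]^{\mathfrak{S}_2}$ forces $a_1=b/\Delta$, $a_2=-b/\Delta+c$ with $b,c\in\mathbb{K}$ and $\Delta=q_1^{-1}q_2^{-1}(q_1-1)(q_2-1)(q_1q_2-1)$, and one must still show $\Delta\mid b$. For this the paper invokes the wheel conditions satisfied by classes pushed forward from $\mathscr{C}(2)$ (Negu\c{t}'s \cite[Proposition~2.11]{N2}, extended in Remark~\ref{rmk:wheel} to allow $i=k$): evaluating $b+c[M_2]$ at the specializations $z_i=q_1^{-1}z_j=q_1^{-1}q_2^{-1}z_k$ and $z_i=q_2^{-1}z_j=q_1^{-1}q_2^{-1}z_k$ yields $b|_{q_1=1}=b|_{q_2=1}=b|_{q_1q_2=1}=0$, hence the divisibility. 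Note that the support-theoretic divisibility of Lemma~\ref{lemma:div} is unavailable here since $\gcd(2,0)\neq 1$ (indeed $i_\ast[\mathcal{E}_{2,0}]$ is not divisible by \eqref{divis}), so without the wheel-condition input your outline has no mechanism to rule out classes like $\Delta^{-1}([M_1]-[M_2])$, and the proof is incomplete.
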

\begin{proof}
	It is enough to show that the image of 
	\begin{align}\label{image:i}
		i_{\ast} \colon K_T(\mathbb{T}(2)_0) \to 
		\mathbb{K}[z_1^{\pm 1}, z_2^{\pm 1}]^{\mathfrak{S}_2}
		\end{align} 
	is generated by $i_{\ast}M_1$, $i_{\ast}M_2$ and that $i_{\ast}M_1, i_{\ast}M_2$
	are linearly independent over $\mathbb{F}$. 
	Below we omit $i_{\ast}$
	from the notation $i_{\ast}M_j$. 
	From the exact sequences (\ref{exact:1}), (\ref{exact:2}), we have 
	\begin{align}\label{compute:M}
	[M_1]=q_1 q_2+q_1^{-1}+q_2^{-1}-\mathfrak{sl}, \ 
		[M_2]=q_1^{-1} q_2^{-1}+q_1+q_2-\mathfrak{sl}. 
		\end{align}
	A direct computation shows that 
	\begin{align}\label{dcompute}
		[\mathcal{E}_{1, 0} \ast \mathcal{E}_{1, 0}]&=
1+q_1^{-1}+q_2^{-1}+q_1^{-1}q_2^{-2}+q_1^{-2}q_2^{-1}+q_1^{-2}q_2^{-2}-2\mathfrak{sl} q_1^{-1}q_2^{-1} \\
\notag	&=q_1^{-1}q_2^{-1}([M_1]+[M_2]), \\
\notag [\mathcal{E}_{2, 0}]&=
	 (1-q_1^{-1})(1-q_2^{-1})(\mathfrak{sl}-q_1^{-1}-q_2^{-1}-q_1^{-1}q_2^{-1}) \\
\notag	 &=(q_1^{-1}+q_2^{-1})[M_1]-(q_1^{-1}q_2^{-1}+1)[M_2]. 	
	\end{align}
 Here 
 $[\mathcal{E}_{1, 0} \ast \mathcal{E}_{1, 0}]$
 is computed from (\ref{def:shuffle}) and 
 $[\mathcal{E}_{2, 0}]$ is computed from (\ref{elem:E}).  
Therefore we have 
\begin{align}\label{emb:M}
	\mathbb{K}[\mathcal{E}_{2, 0}] \oplus 
	\mathbb{K}[\mathcal{E}_{1, 0} \ast \mathcal{E}_{1, 0}] 
	\subset \mathbb{K}[M_1] \oplus \mathbb{K}[M_2]. 
	\end{align}
	Since we have 
	\begin{align}\label{det:q}
	\det	\begin{pmatrix}
			q_1^{-1}q_2^{-1} & q_1^{-1} q_2^{-1} \\
			q_1^{-1}+q_2^{-1} & -(q_1^{-1}q_2^{-1}+1) \\
		\end{pmatrix}
	=-q_1^{-1}q_2^{-1}(1+q_1^{-1})(1+q_2^{-1}) \in \mathbb{K} \setminus \{0\},                                          			\end{align}
	the embedding (\ref{emb:M}) is an isomorphism after taking 
	$\otimes_{\mathbb{K}} \mathbb{F}$. 
	In particular, $[M_1]$, $[M_2]$ are linearly independent over $\mathbb{F}$. 
	
	By the above argument, we have 
	\begin{align*}
	K_T(\mathbb{T}(2)_0)\otimes_{\mathbb{K}}{\mathbb{F}}=
	\mathbb{F}[\mathcal{E}_{2, 0}] \oplus 
	\mathbb{F}[\mathcal{E}_{1, 0} \ast \mathcal{E}_{1, 0}]= 
	\mathbb{F}[M_1] \oplus \mathbb{F}[M_2],
	\end{align*}
	where the first isomorphism is proved in~\cite[Theorem~4.12]{PT0}. 
   It follows that any element in the image of (\ref{image:i}) 
   is written as $a_1[M_1]+a_2[M_2]$ for $a_1, a_2 \in \mathbb{F}$. 
   As it lies in $\mathbb{K}[z_1^{\pm 1}, z_2^{\pm 1}]$, from (\ref{compute:M}) 
   we have 
   \begin{align*}
   	a_1(q_1 q_2+q_1^{-1}+q_2^{-1})+a_2(q_1^{-1}q_2^{-1}+q_1+q_2) \in \mathbb{K}, \ 
   	a_1+a_2 \in \mathbb{K}. 
   	\end{align*}
   By solving the above equation, there exist $b, c \in \mathbb{K}$ such that 
   \begin{align}\label{a12}
   	a_1=\frac{b}{\Delta}, \ a_2=-\frac{b}{\Delta}+c
   	\end{align}
   where $\Delta$ is given by 
   \begin{align*}
   	\Delta &:=q_1^{-1}+q_2^{-1}+q_1q_2-q_1-q_2-q_1^{-1} q_2^{-1} \\
   	&=q_1^{-1}q_2^{-1}(q_1-1)(q_2-1)(q_1 q_2-1). 
   	\end{align*}
   It is enough to show that $b$ is divisible by $\Delta$. 
   If $a_1, a_2$ are given by (\ref{a12}), then 
   $a_1[M_1]+a_2[M_2]=b+c[M_2]$. As we assumed that $a_1[M_1]+a_2[M_2]$ lies in 
   the image of (\ref{image:i}), by the wheel condition~\cite[Proposition~2.11]{N2} we have 
   \begin{align}\label{wcond}
   	(b+c[M_2])\big|_{z_i=q_1^{-1}z_j=q_1^{-1}q_2^{-1}z_k}=
   		(b+c[M_2])\big|_{z_i=q_2^{-1}z_j=q_1^{-1}q_2^{-1}z_k}=0
   	\end{align}
   	unless $i=j=k$, see Remark~\ref{rmk:wheel}. 
   	   By setting $i=k=1$ and $j=2$,
   we obtain $b|_{q_1 q_2=1}=0$. Similarly we also obtain 
   $b|_{q_1=1}=b|_{q_2=1}=0$, so $b$ is divisible by $\Delta$.   
	\end{proof}
\begin{remark}\label{rmk:wheel}
    	In~\cite[Proposition~2.11]{N2}, it is assumed that 
   	$i$, $j$, and $k$ are pairwise distinct
   	for the identity (\ref{wcond}), 
   	but the proof in loc. cit.
   	works unless $i=j=k$. Indeed, using the notation in loc. cit., let $\phi_e=(x_{ij})$ for $x_{bc} \neq 0$, $x_{ij}=0$ for $(i, j) \neq (b, c)$, 
   	$\phi_{e}^{\ast}=(y_{ij})$ for $y_{ab} \neq 0$, $y_{ij}=0$ for $(i, j) \neq (a, b)$. 
   	Then $\phi_e^{\ast} \phi_e$ is concentrated on $(a, c)$ with value 
   	$y_{ab}x_{bc} \neq 0$, $\phi_e \phi_e^{\ast}$ is concentrated on $(b, b)$ (either zero or non-zero). 
   	Therefore if $[\phi_e, \phi_e^{\ast}]=0$ holds, then we must have $a=b=c$.
   	Unless $a=b=c$, we have $\mu_n^{-1}(0) \cap V_e=\emptyset$
   	in the notation of the proof of~\cite[Proposition~2.11]{N2}. The rest of the argument is 
   	verbatim.  
    
\end{remark}
	\begin{remark}
Note that \[\mathscr{C}(2)^{\rm{cl}}\cong[R/GL(2)] \times \mathbb{C}^2,\]
where $R$ is the determinantal variety of $(3 \times 2)$-matrices, 
and $M_1 \oplus M_2$ gives a noncommutative resolution of 
$R \times \mathbb{C}^2$ by~\cite[Theorem~A]{BLBergh}. 
\end{remark}

\begin{remark}\label{rmk:overK}
	Since (\ref{det:q}) is not invertible in $\mathbb{K}$, 
	the inclusion (\ref{emb:M}) is not an isomorphism if 	we do not take $\otimes_{\mathbb{K}} \mathbb{F}$.  	\end{remark}
	
	\begin{remark}\label{rmk:Pdw}
For $(d, v)\in \mathbb{N}\times\mathbb{Z}$ with $\gcd(d, v)=1$ and $n\geq 1$, let $P_{nd, nv}$
be defined as in~\cite[Equation (1.2)]{Neshuffle}:
\begin{multline}\label{def:Pdv}
P_{nd,nv}:= \frac{(q_1^{-1}-1)^{nd}(q_2^{-1}-1)^{nd}}{(q_1^{-n}-1)(q_2^{-n}-1)}\cdot\\
	\text{Sym}\left(
	\frac{\prod_{i=1}^{nd} z_i^{\big\lfloor \frac{iv}{d}\big\rfloor-\big\lfloor \frac{(i-1)v}{d}\big\rfloor}}{\prod_{i=1}^{nd-1}\left(1-q^{-1}z_{i+1}z_i^{-1}\right)}\sum_{s=0}^{n-1}q^{-s}\frac{z_{d(n-1)+1}\ldots z_{d(n-s)+1}}{z_{d(n-1)}\ldots z_{d(n-s)}}\prod_{i<j}\xi\left(\frac{z_i}{z_j}\right)
	\right).
\end{multline}
Then we have 
\begin{align}\label{compute:P20}
	P_{2, 0} =q_1^{-2}q_2^{-2}(q_1-1)(q_2-1)(q_1 q_2-1)
	=q_1^{-1}q_2^{-1}(M_1-M_2). 	
\end{align}
Together with (\ref{dcompute}), we have 
\begin{align}\label{inclP}
	\mathbb{K}[P_{1, 0} \ast P_{1, 0}] \oplus \mathbb{K}[P_{2, 0}]
	\subsetneq \mathbb{K}[M_1] \oplus \mathbb{K}[M_2]
\end{align}
with cokernel $\mathbb{Z}/2$. The inclusion \eqref{inclP} is an isomorphism after $\otimes_{\mathbb{Z}} \mathbb{Q}$. 

More generally, we expect that 
\begin{align*}
K_T(\mathbb{S}(nd)_{nv})_{\mathbb{Q}}'=
\bigoplus_{n_1+\cdots+n_k=n} \mathbb{K}_{\mathbb{Q}}
[P_{n_1 d, n_1 v} \ast \cdots \ast P_{n_k d, n_k v}]. 
\end{align*}
More details will be discussed in~\cite{PTsym}.
\end{remark}

\section{The coproduct on quasi-BPS categories and K-theoretic BPS spaces}\label{s4}

Recall that $Q$ is the quiver with one vertex and three edges $x, y, z$. Recall the regular function \eqref{equation:NHilbTrW} induced from the super-potential $W:=x[y, z]$.
We will denote by $\widetilde{m}$ the Hall product \eqref{prel:hall} of $(Q, W)$ and by $m$ the Hall product \eqref{hall:ast}.

In Subsection \ref{s41}, we define the coproduct $\widetilde{\Delta}$ for the categories $\mathbb{S}^{\bullet}_{\ast}(d)_w$. In Subsection \ref{s42}, 
we prove the compatibility of the product and coproduct for the quasi-BPS categories $\mathbb{S}^{\bullet}_T(d)_w$, see Theorem \ref{prodcoprod}. In Subsection \ref{s43}, 
we use the Koszul equivalence to define the coproduct $\Delta$ for the categories $\mathbb{T}_{\ast}(d)_v$ and to
check the compatibility between the product and the coproduct for the quasi-BPS categories $\mathbb{T}_T(d)_v.$ 

Recall the definition of $\mathcal{D}_{d,v}$ for $(d, v)\in \mathbb{N}\times\mathbb{Z}$ for $\gcd(d, v)=1$ from \eqref{ddv}.
In Subsection \ref{s44},
we show that $K(\mathcal{D}_{d,v})_{\mathbb{F}}$ is isomorphic to the $\mathbb{F}$-algebra of symmetric polynomials, see Proposition \ref{isolambda}. Further, 
we 
consider the space of primitive elements \[\mathrm{P}(nd, nv)\subset K_T(\mathbb{T}(nd)_{nv})\] which we regard as an analogue of cohomological BPS spaces in K-theory, see Proposition \ref{isolambda} and the equality \eqref{Omegad}.

Note that the definition of Hall multiplication involves attracting stacks of antidominant cocharacters. The definition of the coproduct is through attracting stacks of dominant cocharacters. In this section, we will use dominant cocharacters.

\subsection{Preliminaries}
\label{s41}

Let $d\in \mathbb{N}$. 
Recall the stack of representation of $Q$ of dimension $d$:
\[\X(d):=R(d)/G(d):=\mathfrak{g}^{\oplus 3}/GL(d).\]
In this section, we allow partitions which have terms equal to zero.
For $w\in \mathbb{Z}$, let $H_{d, w}$ be the set of partitions $(d_i, w_i)_{i=1}^k$ of $(d, w)$ with $d_i\geq 1$ for $i\in \{1,\ldots, k\}$ such that for $v_i:=w_i-d_i\left(\sum_{i>j}d_j-\sum_{j>i}d_j\right)$ for $i\in \{1,\ldots, k\}$, we have 
\begin{equation}\label{def:equalslopes}
    \frac{v_1}{d_1}=\ldots=\frac{v_k}{d_k}.
\end{equation}
For a partition $A=(d_i, w_i)_{i=1}^k$ with terms possibly equal to zero,
let $I\subset \{1,\ldots, k\}$ be the subset of $i$ with $d_i\geq 1$, and 
define the partition of $(d, w)$ with non-zero terms $\overline{A}:=(d_i, w_i)_{i\in I}$. 

\subsubsection{} 
For $\lambda$ and $\mu$ two cocharacters, let $A_\lambda$ be the set of $(T(d)\times T)$-weights $\beta$ of $R(d)$ such that $\langle \lambda, \beta\rangle>0$, let $I^\mu_\lambda\subset A_\lambda$ be the set of weights such that $\langle \mu, \beta\rangle<0$, and let $A^\mu_\lambda\subset A_\lambda$ be the set of weights such that $\langle \mu, \beta\rangle=0$. Let $J^\mu_\lambda$ be the set of weights $\beta$ of $\mathfrak{g}$ such that $\langle \lambda, \beta\rangle>0$ and $\langle \mu, \beta\rangle<0$.
Define the weights
\begin{align*}
    N^\mu_\lambda:=\sum_{I^\mu_\lambda}\beta,\
    \mathfrak{g}^\mu_\lambda:=\sum_{J^\mu_\lambda}\beta.
\end{align*}

\subsubsection{}\label{nu} Let $\lambda$ and $\mu$ be dominant cocharacters of $T(d)$
with associated partitions $A=(d_i)_{i=1}^k$ and $B=(e_i)_{i=1}^s$, respectively.
Let $W\cong \mathfrak{S}_d$ be the Weyl group of $GL(d)$,
let $W^\lambda\cong \times_{i=1}^k \mathfrak{S}_{d_i}$ be the Weyl
group of $GL(d)^\lambda$, and let $W^\mu$ be the Weyl group of $GL(d)^\mu$.
Define the set of cosets 
\[S^\mu_\lambda:= W^\mu\backslash W/W^\lambda.\]
A coset $C\in S^\mu_\lambda$ corresponds to partitions $(f_{ij})$ for $1\leq i\leq k$ and $1\leq j\leq s$ such that 
\begin{align*}
    \sum_{j=1}^s f_{ij}=d_i\mbox{ for }1\leq i\leq k,
    \
    \sum_{i=1}^k f_{ij}=e_j\mbox{ for }1\leq j\leq s.
\end{align*}
Let $\nu$ be a dominant cocharacter corresponding to the partition 
\begin{equation}\label{permutations10}
(f_{11},\cdots, f_{1s}, f_{21},\cdots, f_{2s}, \cdots, f_{k1},\cdots, f_{ks})
\end{equation}
and let $\kappa$ be a dominant cocharacter corresponding to the partition
\begin{equation}\label{permutations11}
(f_{11},\cdots, f_{k1}, f_{12},\cdots, f_{k2}, \cdots, f_{1s},\cdots, f_{ks}).
\end{equation}
Consider the permutation $w$ of $\mathfrak{S}_d$ of minimal length which permutes blocks of consecutive integers
\begin{multline}\label{permutations}
w=w_C\colon (f_{11},\cdots, f_{1s}, f_{21},\cdots, f_{2s}, \cdots, f_{k1},\cdots, f_{ks})\mapsto\\ (f_{11},\cdots, f_{k1}, f_{12},\cdots, f_{k2}, \cdots, f_{1s},\cdots, f_{ks}).
\end{multline}
Consider the partition 
\begin{equation}\label{partitionD}
  D=(f_{ij}, u_{ij}),  
\end{equation}
 where $f_{ij}$ are ordered as on \eqref{permutations10}, and assume that $\overline{D}$ is in $H_{d,w}$.
Then there also exists a partition $E$ with terms $(f_{ij}, \widetilde{u}_{ij})$ where $f_{ij}$ are ordered as on \eqref{permutations11} such that $\overline{E}$ is in $H_{d, w}$. Consider the order $1\leq l\leq ks$ of the pairs $(i, j)$ as on the first line on \eqref{permutations}.
Define the functor
\begin{align}\label{sw}
\text{sw}_C: \boxtimes_{l=1}^{ks} \mathbb{M}(f_{l})_{u_{l}}&\to 
 \boxtimes_{l=1}^{ks}\mathbb{M}(f_{w(l)})_{\widetilde{u}_{w(l)}}, \\ 
\notag\boxtimes_{l=1}^{ks} \mathcal{A}_{l} &\mapsto \boxtimes_{l=1}^{ks} \mathcal{A}_{w(l)}. 
\end{align}
which permutes the factors as in \eqref{permutations}.
Define 
\begin{align*}
\widetilde{\text{sw}}_C\colon \boxtimes_{i=1}^k \boxtimes_{j=1}^s \mathbb{M}(f_{ij})_{u_{ij}}&\to \boxtimes_{j=1}^s \boxtimes_{i=1}^k \mathbb{M}(f_{ij})_{\widetilde{u}_{ij}},\\
\mathcal{A}&\mapsto \mathrm{sw}_{C}\left(
\mathcal{A}\otimes
\mathcal{O}(-N^{w^{-1}\mu}_\lambda+\mathfrak{g}^{w^{-1}\mu}_\lambda)[|I^{w^{-1}\mu}_\lambda|-|J^{w^{-1}\mu}_\lambda|]
\right),
\end{align*}
where $\mathcal{O}(-N^{w^{-1}\mu}_\lambda+\mathfrak{g}^{w^{-1}\mu}_\lambda)$ is a one dimensional representation of $G^\nu\cong G^\kappa$.
Consider the maps
\[\X(d)^\kappa\xleftarrow{q_{\kappa^{-1}\mu}}\left(\X(d)^\mu\right)^{\kappa^{-1}\geq 0}\xrightarrow{p_{\kappa^{-1}\mu}}\X(d)^\mu.\]
Finally, define 
\[\widehat{m}_{DE}=p_{\kappa^{-1}\mu*}q_{\kappa^{-1}\mu}^*
\widetilde{\text{sw}}_C\colon \mathbb{M}_C\to \mathbb{M}_B.\]
There are analogous such functors for categories of (equivariant and/ or graded) matrix factorizations \[\widehat{m}_{CB}\colon \mathbb{S}^\bullet_{\ast, C}\to \mathbb{S}^\bullet_{\ast, B}.\]

\subsubsection{}\label{Mac}
We introduce some more maps and functors needed in the rest of this section. Let $\lambda$ and $\mu$ be dominant cocharacters and let $\nu$ be a dominant cocharacter corresponding to a partition in $S^\mu_\lambda$. The map $p_{\lambda^{-1}}$ factors as $p_{\lambda^{-1}}=\pi_{\lambda^{-1}} \iota_{\lambda^{-1}}$, where 
\begin{align*}
    \pi_{\lambda^{-1}}&: R(d)/G(d)^{\lambda^{-1}\geq 0}\to R(d)/G(d),\\
    \iota_{\lambda^{-1}}&: R(d)^{\lambda^{-1}\geq 0}/G(d)^{{\lambda^{-1}}\geq 0}\to R(d)/G(d)^{{\lambda^{-1}}\geq 0}.
\end{align*}
There are similarly defined maps
\begin{align*}
    \pi_{\kappa^{-1}\mu}&: R(d)^\mu/\left(G(d)^{\mu}\right)^{{\kappa^{-1}}\geq 0}\to R(d)^\mu/G(d)^\mu,\\
    \iota_{\kappa^{-1}\mu}&: \left(R(d)^\mu\right)^{\kappa^{-1}\geq 0}/\left(G(d)^{\mu}\right)^{\kappa^{-1}\geq 0}\to R(d)^\mu/\left(G(d)^{\mu}\right)^{\kappa^{-1}\geq 0}.
\end{align*}

\subsubsection{} Let $\mu$ be a dominant cocharacter of $T(d)$,
let $b\in\mathbb{Z}$, and let $D^b\left(\X(d)^{\mu\geq 0}\right)_{\leq b}$ be the subcategory of $D^b\left(\X(d)^{\mu\geq 0}\right)$ generated by complexes $q_\mu^*\mathcal{A}$ for $\mathcal{A}\in D^b\left(\X(d)^\mu\right)_i$ and $i\leq b$. There is a semiorthogonal decomposition 
\[D^b\left(\X(d)^{\mu\geq 0}\right)_{\leq b}=\Big\langle D^b\left(\X(d)^{\mu\geq 0}\right)_{\leq b-1}, D^b\left(\X(d)^{\mu\geq 0}\right)_{b} \Big\rangle\]
and there are equivalences $q_\mu^*: D^b\left(\X(d)^{\mu}\right)_{b}\xrightarrow{\sim} D^b\left(\X(d)^{\mu\geq 0}\right)_{b}$. 
We define the functor 
 \[\beta_b: D^b\left(\X(d)^{\mu\geq 0}\right)_{\leq b}\to D^b\left(\X(d)^{\mu\geq 0}\right)_{b}\]
 to be the projection with respect
 to the above semiorthogonal decomposition. 
\subsubsection{} 
Let $B=(d_i, w_i)_{i=1}^k$ be a partition
of $(d, w)$ in $H_{d, w}$. Let $\mu$ be a dominant cocharacter for the partition $(d_i)_{i=1}^k$ and let $b:=\frac{n_{\mu}}{2}=\langle \mu, \mathfrak{g}^{\mu>0} \rangle$, see \eqref{def:nlambda} for the definition of $n_\lambda$.
\begin{lemma}
There is a functor 
\begin{align}\label{def:coprod0}
\widetilde{\Delta}_B:=\left(q_\mu^*\right)^{-1}\beta_b\, p_\mu^*: \mathbb{M}(d)_w\to \mathbb{M}_B:=\boxtimes_{i=1}^k \mathbb{M}(d_i)_{w_i}.
\end{align}
\end{lemma}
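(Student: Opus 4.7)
The plan is to prove well-definedness of $\widetilde{\Delta}_B$ by checking the two substantive conditions on the generators $\mathcal{O}_{\X(d)}\otimes \Gamma_{GL(d)}(\chi)$ of $\mathbb{M}(d)_w$ (those with $\chi + \rho \in \mathbf{W}(d)_w$): first, that $p_\mu^{\ast}$ lands in $D^b(\X(d)^{\mu\geq 0})_{\leq b}$ so that $\beta_b$ applies; and second, that after $\beta_b$ and the equivalence $(q_\mu^{\ast})^{-1}$, the output lies in $\mathbb{M}_B = \boxtimes_{i=1}^k \mathbb{M}(d_i)_{w_i}$. Note $p_\mu^{\ast}(\mathcal{O}_{\X(d)}\otimes \Gamma(\chi)) \cong \mathcal{O}_{\X(d)^{\mu\geq 0}}\otimes \Gamma(\chi)|_{G(d)^{\mu\geq 0}}$, so everything reduces to tracking $\mu$-weights.

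For the first condition, I would exploit that the coordinate ring of the attracting locus $R(d)^{\mu\geq 0}$ is generated in non-positive $\mu$-weights, so the $\mu$-weights of $p_\mu^{\ast}(\mathcal{O}\otimes\Gamma(\chi))$ are bounded above by the maximum $\mu$-weight of $\Gamma(\chi)$, which equals $\langle \mu, \chi\rangle$ by the rearrangement inequality (both $\mu$ and $\chi$ are dominant). Combining the polytope condition $\chi + \rho \in \mathbf{W}(d)_w$ with the identity $\langle \mu, \rho\rangle = \tfrac{1}{2}\langle \mu, \mathfrak{g}^{\mu>0}\rangle$ yields
\begin{equation*}
\langle \mu, \chi\rangle \,\leq\, \langle \mu, \mathfrak{g}^{\mu>0}\rangle + w\langle \mu, \tau_d\rangle \,=\, b + w\langle \mu, \tau_d\rangle.
\end{equation*}
Under the grading convention on $\X(d)^{\mu}$ used in the lemma (normalized modulo the $1_d$-action, which acts by $w$ on $\mathbb{M}(d)_w$), this is precisely the bound $i \leq b$. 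In particular the pullback lies in the semiorthogonal summand $D^b(\X(d)^{\mu\geq 0})_{\leq b}$ and $\beta_b$ is defined.

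For the second condition, I would analyze $\beta_b\, p_\mu^{\ast}(\mathcal{O}\otimes \Gamma(\chi))$ as the $\mu$-weight $=b$ piece of $\mathcal{O}_{\X(d)^{\mu\geq 0}}\otimes \Gamma(\chi)|_{G(d)^{\mu\geq 0}}$. The unipotent-radical Koszul filtration shows that, up to terms in strictly smaller $\mu$-weight, the highest $\mu$-weight isotypic piece of $\Gamma(\chi)|_{G(d)^{\mu}}$ is the $G(d)^{\mu}$-irreducible $\Gamma_{G(d)^{\mu}}(\chi) = \boxtimes_a \Gamma_{GL(d_a)}(\chi^{(a)})$, where $\chi = \sum_a \chi^{(a)}$ is the block decomposition. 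The crucial point is that $\beta_b \neq 0$ forces $\langle \mu, \chi\rangle = b + w\langle \mu, \tau_d\rangle$, which in turn forces the expansion $\chi + \rho = \tfrac{3}{2}\sum c_{ij}(\beta_i-\beta_j) + w\tau_d$ to have $c_{ij} = 1$ whenever $\langle \mu, \beta_i-\beta_j\rangle > 0$ and $c_{ij} = 0$ whenever $\langle \mu, \beta_i - \beta_j\rangle < 0$. Using $\rho = \oplus_a \rho_{d_a} + \tfrac{1}{2}\mathfrak{g}^{\mu>0}$ and projecting onto the $a$-th block then yields
\begin{equation*}
\chi^{(a)} + \rho_{d_a} \,=\, w_a\tau_{d_a} + \tfrac{3}{2}\sum_{i,j\in B_a} c_{ij}(\beta_i - \beta_j) \,\in\, \mathbf{W}(d_a)_{w_a},
\end{equation*}
with the resulting $w_a$ satisfying $v_a/d_a = w/d$ for every $a$, i.e., $(d_a, w_a)_a \in H_{d,w}$. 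This places the projection inside $\boxtimes_a \mathbb{M}(d_a)_{w_a} = \mathbb{M}_B$.

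The main obstacle is the third step: showing that $\beta_b$ really extracts the claimed $\boxtimes_a \Gamma_{GL(d_a)}(\chi^{(a)})$ component (and nothing else in lower $\mu$-weight contributes). The cleanest route is via the unipotent-radical filtration of $\Gamma(\chi)|_{G(d)^{\mu\geq 0}}$: its associated graded is a sum $\bigoplus_\nu \Gamma_{G(d)^\mu}(\nu)\otimes S^{\bullet}((\mathfrak{g}^{\mu>0})^{\vee})$, and the only summands surviving under projection to $\mu$-weight exactly $b$ are those with $\nu$ on the extremal $\mu$-face, which by the Weyl-combinatorial argument above are precisely of the required form. When no such $\nu$ exists the projection vanishes, which is also compatible with landing in $\mathbb{M}_B$. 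Together with the obvious equivalence $q_\mu^{\ast}\colon D^b(\X(d)^\mu)_b \xrightarrow{\sim} D^b(\X(d)^{\mu\geq 0})_b$ (whose inverse exists because $R(d)^{\mu\geq 0}$ has no positive $\mu$-weight functions), this completes the construction of the functor $\widetilde{\Delta}_B = (q_\mu^{\ast})^{-1}\beta_b\, p_\mu^{\ast}$.
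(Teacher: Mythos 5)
Your argument is correct and follows essentially the same route as the paper's own proof: reduce to the generators $\mathcal{O}_{\X(d)}\otimes\Gamma_{GL(d)}(\chi)$ with $\chi+\rho\in\mathbf{W}(d)_w$, note that $\beta_b\,p_\mu^*$ kills them unless $\chi+\rho$ lies on the extremal face attached to $\mu$, and in the extremal case decompose $\chi+\rho-w\tau_d$ along that face to check $\chi_i+\rho_i\in\mathbf{W}(d_i)_{w_i}$, so the output is $\boxtimes_i\,\mathcal{O}_{\X(d_i)}\otimes\Gamma_{GL(d_i)}(\chi_i)$. The only (harmless) differences are that you verify explicitly the weight bound ensuring $p_\mu^*$ lands in $D^b(\X(d)^{\mu\geq 0})_{\leq b}$, and you re-derive the face decomposition from the equality case of your estimate, where the paper instead cites an earlier result.
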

\begin{proof}
First, note that the image of $p_\mu^*\left(\mathbb{M}(d)_w\right)$ is in $D^b(\X(d)^{\mu\geq 0})_{\leq b}$ from the description of the category $\mathbb{M}(d)_w$ in Lemma \ref{lemma:HLS}. Thus the image of $\widetilde{\Delta}_B$ is in $D^b(\X(d)^\mu)_b$.
Also, note that the category $\mathbb{M}_B$ is a subcategory of $D^b(\X(d)^\mu)_b$ because $B=(d_i, w_i)_{i=1}^k$ is in $H_{d,w}$.

Let $\chi$ be a dominant weight of $T(d)$ such that $\chi+\rho\in \textbf{W}(d)_{w}$. 
If $\chi+\rho$ is not on the face $F(\mu)$ of the polytope $\textbf{W}(d)$, then \[\beta_b p_\mu^*\left(\mathcal{O}_{\X(d)}\otimes \Gamma_{GL(d)}(\chi)\right)=0.\] If $\chi+\rho$ is on the face $F(\mu)$, then by \cite[Corollary 3.4]{P} we can write
\begin{equation}\label{dechalf}
    \chi+\rho-w\tau_d=\frac{1}{2}N^{\mu>0}+\sum_{i=1}^k\left(\psi_i+\rho_i\right),
    \end{equation}
    where $\psi_i \in M(d_i)$ and 
    $\psi_i+\rho_i\in \textbf{W}(d_i)_0$.
    In particular, we have that \[\chi=\sum_{i=1}^k \psi_i +\mathfrak{g}^{\mu>0}+w\tau_d.\]
    Write $\chi=\sum_{i=1}^k \chi_i$. We have 
    \begin{align*}
       \langle 1_{d_i}, \chi_i \rangle=
        \frac{d_i w}{d}+\langle 1_{d_i}, \mathfrak{g}^{\mu>0}\rangle=v_i+\langle 1_{d_i}, \mathfrak{g}^{\mu>0}\rangle=w_i. \end{align*}  Further, we have that $\chi_i=\psi_i+w_i\tau_{d_i}$, so $\chi_i+\rho_i\in \textbf{W}(d_i)_{w_i}$.
    Therefore we have
\[\left(q_\mu^*\right)^{-1}\beta_b\, p_\mu^*\left(\mathcal{O}_{\X(d)}\otimes \Gamma_{GL(d)}(\chi)\right)=\boxtimes_{i=1}^k\left(\mathcal{O}_{\X(d_i)}\otimes \Gamma_{GL(d_i)}(\chi_i)\right)\in \mathbb{M}_B.\]
\end{proof}
Let $\chi$ be a dominant weight with $\chi+\rho\in\textbf{W}(d)$. Note that
\begin{equation}\label{computationdeltab}
    \widetilde{\Delta}_B\left(\mathcal{O}_{\X(d)}\otimes \Gamma_{GL(d)}(\chi)\right)=
    \begin{cases} \mathcal{O}_{\X(d)^\mu}\otimes \Gamma_{GL(d)^\mu}(\chi),\text{ if }\chi\in F(\mu),\\
    0,\text{ otherwise}.
    \end{cases}
\end{equation}

The functor (\ref{def:coprod0}) induces a functor 
\begin{equation}\label{def:coproduct}
\widetilde{\Delta}_B:=\left(q_\mu^*\right)^{-1}\beta_b\, p_\mu^*: \mathbb{S}^\bullet_\ast(d)_w\to \mathbb{S}^\bullet_{\ast, B}.
\end{equation}

Let $A=(d_i, w_i)_{i=1}^k$ and $C=(f_i, u_i)_{i=1}^{l}$ be partitions in $H_{d,w}$ such that $C$ is a refinement of $A$, see Subsection \ref{compa}.
One can analogously define functors
\begin{align*}
    \widetilde{\Delta}_{AC}\colon \mathbb{M}_A\to \mathbb{M}_C, \ 
    \widetilde{\Delta}_{AC}\colon \mathbb{S}^\bullet_{\ast, A}\to \mathbb{S}^\bullet_{\ast, C}
\end{align*} for categories $\mathbb{S}^{\bullet}_{\ast}$ as in Subsection \ref{gradingMF}.

\subsection{Compatibility between the product and the coproduct}\label{s42}

In this section, we show that $\widetilde{m}$ and $\widetilde{\Delta}$ are compatible. Recall the forget-the-potential map \eqref{definition:forgetpotential}:
\[\Theta\colon K_T\left(\mathrm{MF}\left(\X(d), \Tr W\right)\right)\to K_T\left(\mathrm{MF}(\X(d), 0)\right).\]
Recall that $K_T(\mathbb{S}_A)_\mathbb{F}\hookrightarrow K_T(\mathbb{M}_A)_\mathbb{F}$, see \cite[Theorem 4.12 and Equation (4.36)]{PT0}. 
Let 
\[K_T(\mathbb{S}_A)':=
K_T\left(\mathbb{S}_A\right)/
(\mathbb{K}\text{-torsion})
\stackrel{\cong}{\to}
\text{image}\left(\Theta\colon K_T(\mathbb{S}_A)\to K_T(\mathbb{M}_A)
\right).\]

\begin{thm}\label{prodcoprod}
Consider a pair $(d, w)\in \mathbb{N}\times\mathbb{Z}$.
Let $\lambda$ and $\mu$ be dominant cocharacters with associated partitions $A=(d_i, w_i)_{i=1}^k$ and $B=(e_i, v_i)_{i=1}^s$ in $H_{d, w}$.
Let $S^B_A\subset S^\mu_\lambda$ be the set of partitions $C=(f_i, u_i)_{i=1}^l$ with $l=ks$ with $\overline{C}$ in $H_{d,w}$ and such that 
\begin{align*}
    f_{(i-1)s+1}+f_{(i-1)s+2}+\ldots+f_{is}&=d_i\text{ for }1\leq i\leq k,\\
    f_j+f_{s+j}+\ldots+f_{(k-1)s+j}&=e_j\text{ for }1\leq j\leq s.
\end{align*}
Then the following diagram commutes:
\begin{equation*}
    \begin{tikzcd}
    K_T(\mathbb{S}_A)'\arrow[r, "\widetilde{m}_A"]\arrow[d, "\bigoplus \widetilde{\Delta}_{AC}"]& K_T(\mathbb{S}(d)_w)'\arrow[d, "\widetilde{\Delta}_B"]\\
    \bigoplus_{C\in S^B_A} K_T(\mathbb{S}_C)'\arrow[r, "\bigoplus \widehat{m}_{CB}"]& K_T(\mathbb{S}_B)'.
    \end{tikzcd}
\end{equation*} 
\end{thm}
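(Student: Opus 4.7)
The plan is to prove the commutativity first as an identity of functors $\mathbb{M}_A \to \mathbb{M}_B$ between the underlying categories of coherent sheaves, and then descend to $K_T$ of matrix factorizations via the forget-the-potential map. The central technique is derived base change for attracting stacks, applied along both the antidominant $\lambda$ (defining $\widetilde{m}_A$) and the dominant $\mu$ (defining $\widetilde{\Delta}_B$).

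Form the fiber product $\mathcal{X}(d)^{\mu \geq 0} \times_{\mathcal{X}(d)} \mathcal{X}(d)^{\lambda \geq 0}$. A Bruhat-type decomposition for $W^{\mu} \backslash W / W^{\lambda} \cong S^B_A$ stratifies this fiber product so that each stratum is indexed by $C \in S^B_A$ and factors through the attracting locus of the refined cocharacter $\nu$ from Subsection \ref{nu} (respectively its sibling $\kappa$ with columns permuted by $w_C$). With this stratification, derived base change yields
\[
p_\mu^* \circ p_{\lambda \ast} \circ q_\lambda^* = \bigoplus_{C \in S^B_A} (p_\mu^C)_\ast \circ (q_\lambda^C)^*,
\]
where each summand is a composition of pullbacks and proper pushforwards along the maps factoring through $\mathcal{X}(d)^\nu$ and $\mathcal{X}(d)^\kappa$.

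For each $C$, a direct identification of normal bundles shows the stratum contribution equals $\widehat{m}_{CB} \circ \widetilde{\Delta}_{AC}$, after twisting by the line bundle $\mathcal{O}(-N^{w^{-1}\mu}_{\lambda} + \mathfrak{g}^{w^{-1}\mu}_{\lambda})$ (which compensates for the difference between the conormal directions of the $\nu$- and $\kappa$-attracting loci) and applying the homological shift $[|I^{w^{-1}\mu}_{\lambda}| - |J^{w^{-1}\mu}_{\lambda}|]$ coming from the relative derived structure. These are precisely the data packaged into $\widetilde{\mathrm{sw}}_C$. The passage to matrix factorizations is automatic: the potential $\Tr W_d$ is $G(d)$-invariant, hence restricts compatibly to every attracting locus in sight, and the argument of Subsection \ref{subsection211} applies verbatim to both the coproduct $\widetilde{\Delta}$ and the partial Hall product $\widehat{m}$.

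The main obstacle is checking that the top-weight projection $\beta_b$ in $\widetilde{\Delta}_B$ interacts correctly with the base change, i.e., that the $C$-stratum contribution lands exactly in the $b$-weight piece for $\mu$, so that $(q_\mu^\ast)^{-1}\beta_b$ makes sense on it. By the computation \eqref{computationdeltab}, combined with the description of generators of $\mathbb{M}(d)_w$ as $\mathcal{O}_{\mathcal{X}(d)} \otimes \Gamma_{GL(d)}(\chi)$ for $\chi+\rho$ on the face $F(\mu)$ of $\textbf{W}(d)$, this reduces to a Borel--Weil--Bott computation (Proposition \ref{bbw}) applied stratum-by-stratum. The key combinatorial fact is that the face structure of $\textbf{W}(d)$, restricted to weights produced by the Hall product $\widetilde{m}_A$ applied to $\mathbb{M}_A$, decomposes exactly along the refinements indexed by $S^B_A$: the weights lying on $F(\mu)$ are precisely those arising from partitions $C$ refining both $A$ and $B$, with the corresponding $\chi$ splitting as in Equation \eqref{dechalf} relative to $\nu$. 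Once this combinatorial bijection is set up, the rest is matching Weyl characters, and commutativity at the K-theory level follows.
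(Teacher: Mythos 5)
Your route --- compute the restriction of the Hall product by base change over the fiber product $\X(d)^{\mu\geq 0}\times_{\X(d)}\X(d)^{\lambda\geq 0}$, stratify by double cosets, identify each stratum with $\widehat{m}_{CB}\widetilde{\Delta}_{AC}$ up to the twist and shift packaged in $\widetilde{\mathrm{sw}}_C$, and then descend to K-theory of matrix factorizations --- is viable and is, in substance, the computation the paper carries out. The difference is one of implementation: the paper does not stratify the fiber product, but evaluates $\widetilde{m}_A$ on the generators $\mathcal{O}_{\X(d)^\lambda}\otimes\Gamma_{GL(d)^\lambda}(\chi)$ of $\mathbb{M}_A$ via an explicit Koszul resolution (Proposition \ref{bbw}), applies $\widetilde{\Delta}_B$ term by term using \eqref{computationdeltab}, and regroups the surviving terms into subcomplexes indexed by $C\in S^B_A$; this gives the stronger categorical statement of Proposition \ref{prop42} (natural maps whose direct sum over $C$ is an isomorphism on each generator), of which the K-theoretic diagram is a corollary. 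Your stratification is the geometric repackaging of that bookkeeping, and it would only yield the K-theoretic identity, which is all the theorem needs.

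Two points in your sketch carry hidden work. First, the double cosets $W^\mu\backslash W/W^\lambda$ index \emph{all} matrices $(f_{ij})$ with row sums $d_i$ and column sums $e_j$, which is in general strictly larger than $S^B_A$: the slope condition $\overline{C}\in H_{d,w}$ forces $f_iw/d\in\mathbb{Z}$ for every nonzero part, so e.g.\ when $w/d=1/2$ every double coset with an odd part is excluded. Hence your displayed identity $p_\mu^*p_{\lambda*}q_\lambda^*=\bigoplus_{C\in S^B_A}(p^C_\mu)_*(q^C_\lambda)^*$ cannot hold before applying $\beta_b$; the actual content of the theorem is that the extra strata contribute only $\mu$-weights strictly below $b=\langle\mu,\mathfrak{g}^{\mu>0}\rangle$ and are killed by $\beta_b$, while the strata in $S^B_A$ land exactly on the face $F(\mu)$ and reproduce $\widehat{m}_{CB}\widetilde{\Delta}_{AC}$. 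This is precisely the ``key combinatorial fact'' you assert without proof; it is Proposition \ref{propboundary} together with the surrounding weight manipulations, and it is where essentially all of the work in the paper's proof lies. Second, the passage to matrix factorizations is not the Koszul-duality compatibility of Subsection \ref{subsection211}: the paper descends via the compatibility of $\widetilde{m}$ and $\widetilde{\Delta}$ with the forget-the-potential map \eqref{definition:forgetpotential} and the identification of $K_T(\mathbb{S}_A)'$ with the image of $\Theta$ inside $K_T(\mathbb{M}_A)$ --- which is also why the statement is formulated for torsion-free K-theory. With these two points supplied, your plan coincides with the paper's argument.
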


Theorem \ref{prodcoprod} is a $T$-equivariant version of \cite[Theorem 5.2]{P} and the same proof works to show this statement. 
However, we present an alternative proof which first proves a categorical statement about complexes in $D^b(\X(d)^\mu)$ which is stronger than the results in loc. cit. and is of independent interest for computations in categorical Hall algebras. 

Note that the compatibility of the product and coproduct for localized K-theory, either in the above setting or in the setting of Corollary \ref{cor:44}, follows by a direct computation and Propositions \ref{commutative} and \ref{1236bis}.  

\begin{prop}\label{prop42}
Let $A, B$ be as in Theorem \ref{prodcoprod}.
For $1\leq i\leq k$,
let $\chi_i$ be a dominant weight of $T(d_i)$ for $1\leq i\leq k$ such that $\chi_i+\rho_i\in \mathbf{W}(d_i)$. 
Let $\chi:=\sum_{i=1}^k \chi_i$.
For any $C\in S^B_A$, there are natural maps:
\begin{equation}\label{naturalmap}
\widehat{m}_{CB}
\widetilde{\Delta}_{AC}\left(\mathcal{O}_{\X(d)^\lambda}\otimes \Gamma_{GL(d)^\lambda}(\chi)\right)\to \widetilde{\Delta}_B \widetilde{m}_A\left(\mathcal{O}_{\X(d)^\lambda}\otimes \Gamma_{GL(d)^\lambda}(\chi)\right)
\end{equation}
such that there is an isomorphism:
\begin{equation}\label{isosba}
\bigoplus_{C\in S^B_A}
\widehat{m}_{CB}\widetilde{\Delta}_{AC}\left( \mathcal{O}_{\X(d)^\lambda}\otimes \Gamma_{GL(d)^\lambda}(\chi)\right)\xrightarrow{\sim} 
\widetilde{\Delta}_B \widetilde{m}_A\left(\mathcal{O}_{\X(d)^\lambda}\otimes \Gamma_{GL(d)^\lambda}(\chi)\right).
\end{equation}
\end{prop}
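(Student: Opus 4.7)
The plan is to prove Proposition \ref{prop42} by computing both sides of \eqref{isosba} on the generator $\mathcal{E} := \mathcal{O}_{\X(d)^\lambda} \otimes \Gamma_{GL(d)^\lambda}(\chi)$ via the Borel-Weil-Bott resolution of Proposition \ref{bbw}, and then matching the resulting summands using the combinatorial description of $S^B_A$ as a subset of the double coset set $S^\mu_\lambda = W^\mu \backslash W / W^\lambda$.

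For the left-hand side, since $\lambda^{-1}$ is antidominant, Proposition \ref{bbw} applied to $\widetilde{m}_A = p_{\lambda^{-1}*} q_{\lambda^{-1}}^*$ identifies $\widetilde{m}_A(\mathcal{E})$ with a complex whose summands are $\mathcal{O}_{\X(d)} \otimes \Gamma_{GL(d)}((\chi - \sigma_J)^+)[|J|-\ell(J)]$, where $J$ runs over multisets of $\lambda$-positive $T(d)$-weights of $R(d)$. Applying $\widetilde{\Delta}_B = (q_\mu^*)^{-1} \beta_b\, p_\mu^*$ and using \eqref{computationdeltab}, only those $J$ for which $(\chi - \sigma_J)^+$ lies on the face $F(\mu) \subset \textbf{W}(d)$ survive, each producing a summand $\mathcal{O}_{\X(d)^\mu} \otimes \Gamma_{GL(d)^\mu}((\chi - \sigma_J)^+)[|J|-\ell(J)]$. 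For the right-hand side, fix $C \in S^B_A$ with associated cocharacters $\nu, \kappa$ and permutation $w = w_C$ as in Subsection \ref{nu}. By \eqref{computationdeltab} applied factorwise, $\widetilde{\Delta}_{AC}(\mathcal{E})$ equals $\mathcal{O}_{\X(d)^\nu} \otimes \Gamma_{GL(d)^\nu}(\chi)$ exactly when each component of $\chi$ lies on the appropriate sub-face and is zero otherwise. Composing with $\widehat{m}_{CB} = p_{\kappa^{-1}\mu*} q_{\kappa^{-1}\mu}^* \widetilde{\text{sw}}_C$ and applying Proposition \ref{bbw} a second time to the smaller Hall product inside $\X(d)^\mu$, one obtains a complex in $\mathbb{M}_B$ whose summands are indexed by multisets of $\kappa$-positive weights of $R(d)^\mu$, all twisted by $\mathcal{O}(-N^{w^{-1}\mu}_\lambda + \mathfrak{g}^{w^{-1}\mu}_\lambda)[|I^{w^{-1}\mu}_\lambda| - |J^{w^{-1}\mu}_\lambda|]$.

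The matching of the two computations hinges on decomposing the set of $\lambda$-positive $T(d)$-weights of $R(d)$ according to their behavior under the $\mu$-action: the set splits into a disjoint union, over $C \in S^B_A$, of $\lambda$-positive weights lying in the $C$-block of the Weyl double coset stratification, and each block further decomposes into $\mu$-fixed and $\mu$-moving pieces. A multiset $J$ on the LHS survives $\widetilde{\Delta}_B$ precisely when its $\mu$-moving part within each $C$-block sums to $N^{w^{-1}\mu}_\lambda$, which singles out the $C$-summand to which it contributes, while its $\mu$-fixed part then furnishes the multiset driving the second BWB computation for the smaller Hall product in $\widehat{m}_{CB}$. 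The natural maps \eqref{naturalmap} arise as the resulting inclusion of summands. The main obstacle is the precise bookkeeping of twists and homological shifts: one must verify that the twist $\mathcal{O}(-N^{w^{-1}\mu}_\lambda + \mathfrak{g}^{w^{-1}\mu}_\lambda)$ and the shift $[|I^{w^{-1}\mu}_\lambda| - |J^{w^{-1}\mu}_\lambda|]$ built into $\widetilde{\text{sw}}_C$ compensate exactly for the difference between the one-step BWB computation on the LHS and the two-step computation on the RHS, so that the direct sum over $C \in S^B_A$ exhausts every surviving summand of the LHS without repetition.
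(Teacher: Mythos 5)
Your strategy coincides with the paper's: expand $\widetilde m_A$ applied to the generator via the Borel--Weil--Bott/Koszul resolution of the Hall product, kill the terms not landing on the face $F(\mu)$ using \eqref{computationdeltab}, group the surviving terms by $C\in S^B_A$, and identify each group with $\widehat m_{CB}\widetilde\Delta_{AC}$ through a second resolution, with the twist and shift in $\widetilde{\mathrm{sw}}_C$ absorbing the discrepancy. However, as written there are two genuine gaps, and they are exactly where the substance of the proof lies. First, your claim that a multiset $J$ survives $\widetilde\Delta_B$ precisely when its $\mu$-moving part is forced (so that $\sigma_J=N^{\varphi}_\lambda+\sigma_{J'}$ with $J'\subset A^{\varphi}_\lambda$, $\varphi=w^{-1}\mu$) is asserted, not proved; in the paper this is Proposition \ref{propboundary}, whose proof is a nontrivial polytope computation (decomposing $\chi-\sigma_I+\rho$ against $\frac{1}{2}N^{\varphi>0}$ and comparing $\varphi$-pairings of the two sides of \eqref{lambdavarphi2}). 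Without it you can conclude neither that the remaining terms die under $\widetilde\Delta_B$ nor that the survivors are indexed exactly by the pairwise disjoint sets $I_C$, which is what makes the direct sum over $C$ exhaust the target without repetition.

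Second, you explicitly defer the ``bookkeeping of twists and shifts,'' but this verification is not routine and is the heart of the comparison: the mechanism that makes it work is the factorization $\widetilde w_J=s_J\circ w$ with $\ell(\widetilde w_J)=\ell(w)+\ell(s_J)$, so that the one-step dominantization on the $\widetilde\Delta_B\widetilde m_A$ side splits into the action of $w$ (which produces precisely the twist $\mathcal{O}(-N^{\varphi}_\lambda+\mathfrak g^{\varphi}_\lambda)$ and shift $[|I^{\varphi}_\lambda|-|J^{\varphi}_\lambda|]$ built into $\widetilde{\mathrm{sw}}_C$) followed by $s_J\in W^\mu$ (which drives the second Koszul resolution computing the smaller Hall product), together with the bijection $w\colon A^{\varphi}_\lambda\xrightarrow{\sim}I_{\kappa\mu}$ of Proposition \ref{prop:help}. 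Relatedly, matching BWB summands termwise after passing to dominant conjugates does not by itself produce the natural maps \eqref{naturalmap}: an identification of direct summands of the associated terms need not respect the differentials. The paper avoids this by defining the maps before dominantization, as inclusions of Koszul-type subcomplexes indexed by $I_C=\{N^{\varphi}_\lambda+\sigma_J \mid J\subset A^{\varphi}_\lambda\}$ as in \eqref{C}, and only afterwards applying Borel--Weil--Bott; your outline needs the analogous construction to make \eqref{naturalmap} and hence \eqref{isosba} well defined.
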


\begin{proof}[Proof of Theorem \ref{prodcoprod}]
By Proposition \ref{prop42}, the following diagram commutes:
\begin{equation*}
    \begin{tikzcd}
    K_T(\mathbb{M}_A)\arrow[r, "\widetilde{m}_A"]\arrow[d, "\bigoplus \widetilde{\Delta}_{AC}"]& K_T(\mathbb{M}(d)_w)\arrow[d, "\widetilde{\Delta}_B"]\\
    \bigoplus_{C\in S^B_A} K_T(\mathbb{M}_C)\arrow[r, "\bigoplus \widehat{m}_{CB}"]& K_T(\mathbb{M}_B).
    \end{tikzcd}
    \end{equation*}
    The maps $\widetilde{m}$ and $\widetilde{\Delta}$ are compatible with the forget-the-potential map \eqref{definition:forgetpotential} by \cite[Proposition 3.6]{P0} and \cite[Proposition 5.1]{P}, respectively. The conclusion thus follows.
\end{proof}

\begin{proof}[Proof of Proposition \ref{prop42}]
The argument follows closely the proof of \cite[Theorem 5.2]{P3}. We give an overview of the proof. We use a Koszul resolution to compute $\widetilde{m}_A\left(\mathcal{O}_{\X(d)^\lambda}\otimes \Gamma_{GL(d)^\lambda}(\chi)\right)$ in terms of $\mathcal{O}\otimes\Gamma_{GL(d)}(\theta)$, where $\theta=(\chi-\sigma_I)^+$ for $\sigma_I$ a partial sum of weights pairing positively with $\lambda$. 
Let $\mathcal{O}\otimes\Gamma_{GL(d)}(\theta)$
be a vector bundle appearing in the Koszul resolution with non-zero $\widetilde{\Delta}_B$. Then the weight $\theta$ is on a face of $\textbf{W}(d)$, see \eqref{computationdeltab}. 
We use Proposition \ref{propboundary} to characterize the highest weights $\theta$ on a face of $\textbf{W}(d)$ in terms of partitions $C\in S^B_A$.  
The proof then follows from a direct comparison with the right hand side of \eqref{isosba}. Let $\sigma_I$ be a sum such that $\chi-\sigma_I$ is on a face of $\textbf{W}(d)$ corresponding to $C\in S^B_A$ with associated permutation $w$, see \eqref{permutations}.
The swap morphism appears because conjugating $\chi-\sigma_I$ to $\theta=(\chi-\sigma_I)^+$ first requires to act by $w$.

The multiplication $\widetilde{m}_A$ is defined as $\widetilde{m}_A=p_{\lambda^{-1}*}q_{\lambda^{-1}}^*$.
Let $\chi:=\sum_{i=1}^k \chi_i$.
Consider the Koszul resolution:
\begin{align*}
\widetilde{m}_A\left(\mathcal{O}_{\X(d)^\lambda}\otimes\Gamma_{GL(d)^\lambda}(\chi)\right)&=
\pi_{\lambda^{-1}*}\iota_{\lambda^{-1}*}q_{\lambda^{-1}}^*\left(\mathcal{O}_{\X(d)^\lambda}\otimes\Gamma_{GL(d)^\lambda}(\chi)\right)\\
&\cong 
\pi_{\lambda^{-1}*}\left(\bigoplus_{I\subset A_\lambda} \mathcal{O}_{R(d)}\otimes\Gamma_{GL(d)^{\lambda\leq 0}}(\chi-\sigma_I)[|I|], d\right),
\end{align*}
see Proposition \ref{bbw} for the notation. The differential $d$ is induced by multiplication with generators $(e_i)_{i=1}^t$ of the polynomial ring $\mathbb{C}\left[R(d)^{\lambda<0}\right]\cong \mathbb{C}[e_1,\ldots, e_t]$. 
Fix $C\in S^B_A$, consider the associated dominant cocharacters $\nu$ and $\kappa$, and let $w=w_C\in \mathfrak{S}_d$ as in \eqref{permutations}. 
Let $\varphi:=w^{-1}\mu$.
There are natural maps of complexes 
\begin{multline}\label{C}
\pi_{\lambda^{-1} *}\left(
\bigoplus_{J\subset A^{\varphi}_\lambda}
\mathcal{O}_{R(d)}\otimes\Gamma_{GL(d)^{\lambda\leq 0}}(\chi)
\otimes\mathcal{O}\left(-N^{\varphi}_\lambda-\sigma_J\right)
[\lvert I^{\varphi}_\lambda \rvert+\lvert J \rvert], d
\right)
\to\\ \pi_{\lambda^{-1} *}\left(\bigoplus_{I\subset A_\lambda} \mathcal{O}_{R(d)}\otimes\Gamma_{GL(d)^{\lambda\leq 0}}(\chi)\otimes \mathcal{O}(-\sigma_I)[|I|], d\right)
\end{multline}
induced by the inclusion of sets \[I_C:=\{N^{\varphi}_\lambda+\sigma_J \mid J\subset A^{\varphi}_\lambda\}\subset \{\sigma_I \mid I\subset A_\lambda\}.\] 
The differential $d$ of the complex on the first line is induced by multiplication with generators of the polynomial ring $\mathbb{C}\left[\left(R(d)^{\varphi}\right)^{\lambda<0}\right]$.
For $C, C'$ different elements in $S^B_A$, we have that $I_C\cap I_{C'}=\emptyset$. 
If $\sigma\in \{\sigma_I \mid I\subset A_\lambda\}\setminus \left(\bigcup_{C\in S^B_A}I_C\right)$, then $\mathcal{O}_{R(d)}\otimes\Gamma_{GL(d)^{\lambda\leq 0}}(\chi)\otimes\mathcal{O}(-\sigma_I)$ has $u\mu$-weights strictly less than $\frac{n_{\mu}}{2}=\langle \mu, \mathfrak{g}^{\mu>0}\rangle$ for all $u\in \mathfrak{S}_d$,
see Proposition \ref{propboundary}.
It follows that 
\begin{equation}\label{reszero}
    \widetilde{\Delta}_B \pi_{\lambda^{-1}*}\left(\mathcal{O}_{R(d)}\otimes\Gamma_{GL(d)^{\lambda\leq 0}}(\chi)\otimes\mathcal{O}(-\sigma_I)\right)=0
    \end{equation}
    for such sums $\sigma$, see \eqref{computationdeltab}. 
    It suffices to show that, for $C\in S^B_A$, we have natural isomorphisms of complexes:
\begin{multline}
    \widetilde{\Delta}_B\pi_{\lambda^{-1}*}\left(
\bigoplus_{J\subset A^{\varphi}_\lambda}
\mathcal{O}_{R(d)}\otimes\Gamma_{GL(d)^{\lambda\leq 0}}(\chi)
\otimes\mathcal{O}\left(-N^{\varphi}_\lambda-\sigma_J\right)
[|I^{\varphi}_\lambda|+|J|], d
\right)\\
\cong \widehat{m}_{CB}\widetilde{\Delta}_{AC}\left(\mathcal{O}_{\X(d)^\lambda}\otimes\Gamma_{GL(d)^{\lambda}}(\chi)\right).
\end{multline}
Using \eqref{C}, we obtain the natural maps \eqref{naturalmap}, and further we obtain the isomorphism \eqref{isosba} using the vanishing \eqref{reszero}.

For $J\subset A^{\varphi}_\lambda$, we have that \[w*\left(\chi-N^{\varphi}_\lambda-\sigma_J\right)+\rho\in F(\mu)\subset\textbf{W}(d),\] see Proposition \ref{propboundary}.
Let $\widetilde{w}_J\in \mathfrak{S}_{d}$ be the element of minimal length such that $\widetilde{w}_J*\left(\chi-N^{\varphi}_\lambda-\sigma_J\right)$ is dominant or zero. 
Observe that $\widetilde{w}_\emptyset=w$. However, for general $J$, we have that $\widetilde{w}_J=u_J\circ w$ for a permutation $u_J$ in $\times_{i=1}^s\mathfrak{S}_{e_i}\cong W^\mu$ and $\ell(\widetilde{w}_J)=\ell(w)+\ell(u_J)$.
By the Borel-Bott-Weyl theorem, there is a natural isomorphism
\begin{align*}
&\pi_{\lambda^{-1}*}\left(
\mathcal{O}_{R(d)}\otimes\Gamma_{GL(d)^{\lambda\leq 0}}\left(\chi-N^{\varphi}_\lambda-\sigma_J\right)
\right)\cong\\ &\pi_{\lambda^{-1}*}\left(
\mathcal{O}_{R(d)}\otimes\Gamma_{GL(d)^{\lambda\leq 0}}\left(w*(\chi-N^{\varphi}_\lambda-\sigma_J)\right)[-|J^{\varphi}_\lambda|]
\right)
\end{align*}
and further there are natural isomorphisms
\begin{align}\label{84}
   & \widetilde{\Delta}_B\pi_{\lambda^{-1}*}\left(
\bigoplus_{J\subset A^{\varphi}_\lambda}
\mathcal{O}_{R(d)}\otimes\Gamma_{GL(d)^{\lambda\leq 0}}\left(\chi-N^{\varphi}_\lambda-\sigma_J\right)
[|I^{\varphi}_\lambda|+|J|], d
\right) \\
\notag&\cong \widetilde{\Delta}_B\pi_{\lambda^{-1}*}\left(
\bigoplus_{J\subset A^{\varphi}_\lambda}
\mathcal{O}_{R(d)}\otimes\Gamma_{GL(d)^{\lambda\leq 0}}\left(w*(\chi-N^{\varphi}_\lambda-\sigma_J)\right)
[|I^{\varphi}_\lambda|-|J^{\varphi}_\lambda|+|J|], d
\right) \\
\notag&\cong
\pi_{\kappa^{-1}\mu*}\left(\bigoplus_{J\subset A^{\varphi}_\lambda} 
\mathcal{O}_{R(d)^\mu}\otimes\Gamma_{(GL(d)^\mu)^{\kappa\leq 0}}\left(w*(\chi-N^{\varphi}_\lambda-\sigma_J)\right)
[|I^{\varphi}_\lambda|-|J^{\varphi}_\lambda|+|J|], d
\right).
\end{align}
We have that 
\begin{equation}\label{kappanu}
    \X(d)^\nu=\X(d)^\kappa\text{ and }G(d)^\nu=G(d)^\kappa.
\end{equation}
For a weight $\theta$ of $T(d)$, denote by
   \begin{equation}\label{defPsi}
   \Psi\left(\mathcal{O}_{\X(d)^\kappa}\otimes \Gamma_{G(d)^\kappa}(\theta)\right):=\mathcal{O}_{\X(d)^\kappa}\otimes \Gamma_{G(d)^\kappa}(w*\theta).
    \end{equation}
For a subset $J\subset A^{\varphi}_\lambda$, let $J'=\{w\beta \mid \beta\in J\}$ be the corresponding subset of $I_{\kappa\mu}:=\{\beta\text{ weight of }R(d)^\mu \mid 
\langle \kappa, \beta\rangle>0\}$, see Proposition \ref{prop:help}.
There is an isomorphism 
\begin{align}\label{isomultline}
\mathcal{O}_{(\X(d)^\mu)^{\kappa\leq 0}}\otimes
\Gamma_{(GL(d)^\mu)^{\kappa\leq 0}}
\left(w*(\chi-N^{\varphi}_\lambda-\sigma_J)\right)
[-|J^{\varphi}_\lambda|]
\cong\\ \notag
q_{\kappa^{-1}\mu}^*
\left(\Psi
\left(\mathcal{O}_{\X(d)^\nu}\otimes\Gamma_{GL(d)^\nu}
\left(\chi-N^{\varphi}_\lambda\right)
[-|J^{\varphi}_\lambda|]\right)\otimes\mathcal{O}(-\sigma_{J'})\right).\end{align}
For $\mathcal{B}$ a complex in $D^b(\X(d)^\nu)$, by the definition of $\widetilde{\text{sw}}_C$ we have that  
\begin{equation}\label{884}
\widetilde{\text{sw}}_{C}(\mathcal{B}):=
\Psi\left(\mathcal{B}\otimes \mathcal{O}\left(-N^{\varphi}_\lambda\right)\right)[|I^{\varphi}_\lambda|-|J^{\varphi}_\lambda|].
\end{equation}
We next want to use Proposition \ref{bbw} for the map $\iota_{\kappa^{-1}\mu}$. For this, it is convenient to use the following notation
\[\mathrm{F}\left(\mathcal{O}_{(R(d)^\mu)^{\kappa\leq 0}}\otimes \Gamma_{(G(d)^\mu)^{\kappa\leq 0}}(\theta)\right):= \mathcal{O}_{R(d)^\mu}\otimes \Gamma_{(G(d)^\mu)^{\kappa\leq 0}}(\theta)
\]
for $\theta$ a dominant weight of $T(d)$. 
We consider the Koszul resolution:
\begin{multline}\label{88}
\iota_{\kappa^{-1}\mu*}q_{\kappa^{-1}\mu}^*\left(\widetilde{\text{sw}}_{C} \left(\mathcal{O}_{\X(d)^\kappa}\otimes\Gamma_{GL(d)^\kappa}(\chi)\right)\right)\cong \\\left(\bigoplus_{J'\subset I_{\kappa\mu}}
\mathrm{F} q_{\kappa^{-1}\mu}^*\left(\widetilde{\text{sw}}_{C}\left(\mathcal{O}_{\X(d)^\kappa}\otimes\Gamma_{GL(d)^\kappa}(\chi)\right)\right)\otimes \mathcal{O}(-\sigma_{J'})[|J'|], d\right),\end{multline}
where the differential $d$ is induced by multiplication with generators of the polynomial ring $\mathbb{C}\left[\left(R(d)^{\mu}\right)^{\kappa<0}\right]$.
Rewrite \eqref{84} using \eqref{defPsi}, \eqref{isomultline}, \eqref{884}:
\begin{align}\label{84new}
    & \widetilde{\Delta}_B\pi_{\lambda^{-1}*}\left(
\bigoplus_{J\subset A^{\varphi}_\lambda}
\mathcal{O}_{R(d)}\otimes\Gamma_{GL(d)^{\lambda\leq 0}}\left(\chi-N^{\varphi}_\lambda-\sigma_J\right)
[|I^{\varphi}_\lambda|+|J|], d
\right) \\
\notag&\cong \pi_{\kappa^{-1}\mu*}
\left(\bigoplus_{J'\subset I_{\kappa\mu}}\mathrm{F} q_{\kappa^{-1}\mu}^*\left(\widetilde{\text{sw}}_{C}\left(\mathcal{O}_{\X(d)^\kappa}\otimes \Gamma_{GL(d)^{\kappa}}(\chi))
\right)\otimes \mathcal{O}(-\sigma_{J'})[|J'|], d\right)\right).
\end{align}
There are isomorphisms
\begin{align*}
    &\widetilde{\Delta}_B \pi_{\lambda^{-1}*}
    \left(
\bigoplus_{J\subset A^{\varphi}_\lambda}
\mathcal{O}_{R(d)}\otimes \Gamma_{GL(d)^{\lambda\leq 0}}(\chi)
\otimes
\mathcal{O}\left(-N^{\varphi}_\lambda-\sigma_J\right)
[|I^{\varphi}_\lambda|+|J|], d
\right) \\
&\overset{(1)}{\cong}
\pi_{\kappa^{-1}\mu*}
\left(\bigoplus_{J'\subset I_{\kappa\mu}}\mathrm{F} q_{\kappa^{-1}\mu}^*\left(\widetilde{\text{sw}}_{C}\left(\mathcal{O}_{\X(d)^\kappa}\otimes \Gamma_{GL(d)^{\kappa}}(\chi)
\right)\right)\otimes \mathcal{O}(-\sigma_{J'})[|J'|], d\right)\\
&\overset{(2)}{\cong}\pi_{\kappa^{-1}\mu*}
\iota_{\kappa^{-1}\mu*}
q_{\kappa^{-1}\mu}^*
\left(\widetilde{\text{sw}}_{C} \left(\mathcal{O}_{\X(d)^\kappa}\otimes \Gamma_{GL(d)^{\kappa}}(\chi)\right)
\right)
\\ 
&\overset{(3)}{\cong}\widetilde{m}_{CB} 
\left(\widetilde{\text{sw}}_{C} \left(\mathcal{O}_{\X(d)^\kappa}\otimes \Gamma_{GL(d)^{\kappa}}(\chi)\right)
\right)\\ 
&\overset{(4)}{\cong}\widetilde{m}_{CB} 
\left(\widetilde{\text{sw}}_{C} \left(\widetilde{\Delta}_{AC}(\mathcal{O}_{\X(d)^\lambda}\otimes \Gamma_{GL(d)^{\lambda}}(\chi))\right)
\right).
\end{align*}
Recall that $\widetilde{m}_{\kappa\mu}=\widetilde{m}_{CB}$ and $\widetilde{\Delta}_{\nu\lambda}=\widetilde{\Delta}_{AC}$.
The isomorphism (1) is the isomorphism \eqref{84new} and it respects the differentials. 
The isomorphism (2) follows from \eqref{88}. The isomorphism (3) follows from the definition of $\widetilde{m}_{CB}$. The isomorphism (4) follows from \eqref{kappanu} and the equality \[\widetilde{\Delta}_{AC}\left(\mathcal{O}_{\X(d)^\lambda}\otimes \Gamma_{GL(d)^{\lambda}}(\chi)\right)=\mathcal{O}_{\X(d)^\nu}\otimes \Gamma_{GL(d)^{\nu}}(\chi)=\mathcal{O}_{\X(d)^\kappa}\otimes \Gamma_{GL(d)^{\kappa}}(\chi).\]
\end{proof}

\begin{prop}\label{propboundary}
Let $\lambda$ and $\mu$ be dominant cocharacters of $GL(d)$ and let $w\in W$. Assume that $\chi$ is a dominant weight with $\chi+\rho\in F(\lambda)$ and that $I\subset A_{\lambda}$ such that $w*(\chi-\sigma_I)+\rho\in F(\mu)$. Let $\varphi:=w^{-1}\mu$. Then 
\begin{equation}\label{IJ}
I=\{\beta\in A_\lambda \mid \langle \varphi, \beta\rangle<0\}\sqcup J
\end{equation}
for a subset $J\subset A^{\varphi}_\lambda$.

Conversely, for all $I\subset A_\lambda$ as in \eqref{IJ}, we have $w*(\chi-\sigma_I)+\rho\in F(\mu)$.
\end{prop}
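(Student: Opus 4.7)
The plan is to translate the two face conditions into explicit linear inequalities via pairing with $\mu$ and $\varphi:=w^{-1}\mu$, and to read off the structure of $I$ from extremality. Since $F(\cdot)$, $A_{\cdot}$, $A^{\cdot}_{\cdot}$ and $I^{\cdot}_{\cdot}$ depend only on the cocharacter classes modulo the center of $GL(d)$, I may assume that $\lambda, \mu$ (and hence $\varphi$) are traceless, so that $\langle\lambda,\tau_d\rangle = \langle\mu,\tau_d\rangle = \langle\varphi,\tau_d\rangle = 0$. The hypothesis $\chi+\rho \in F(\lambda)$ lets me write $\chi + \rho = \tfrac{1}{2}R(d)^{\lambda>0} + \psi + r\tau_d$ with $\psi \in \textbf{W}(\lambda)_0$. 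Setting $\eta := w*(\chi-\sigma_I) + \rho = w(\chi+\rho) - \sigma_{wI}$ and using $\langle\mu,w\alpha\rangle = \langle\varphi,\alpha\rangle$ gives
\[
\langle\mu,\eta\rangle = \tfrac{1}{2}\langle\varphi, R(d)^{\lambda>0}\rangle + \langle\varphi,\psi\rangle - \langle\varphi,\sigma_I\rangle.
\]

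Next I decompose $A_\lambda$ by the sign of $\langle\varphi,\cdot\rangle$ as $A_\lambda = A_\lambda^+ \sqcup A^\varphi_\lambda \sqcup I^\varphi_\lambda$, where $A_\lambda^+$ consists of those $\beta \in A_\lambda$ with $\langle\varphi,\beta\rangle>0$, and correspondingly I write $I = I^+ \sqcup I^0 \sqcup I^-$ with $I^\pm \subset A_\lambda^\pm$ and $I^0 \subset A^\varphi_\lambda$. Since $R(d) = \mathfrak{g}^{\oplus 3}$ is self-dual under $\beta \mapsto -\beta$, the $\varphi$-positive weights of $R(d)$ split by sign of $\langle\lambda,\cdot\rangle$ into $A_\lambda^+$, $\{\beta \in R(d)^\lambda : \langle\varphi,\beta\rangle > 0\}$ and $-I^\varphi_\lambda$, whence
\[
\langle\varphi, R(d)^{\varphi>0}\rangle = \langle\varphi,\sigma_{A_\lambda^+}\rangle + \langle\varphi,\sigma'\rangle - \langle\varphi,\sigma_{I^\varphi_\lambda}\rangle,
\]
where $\sigma' := \sum_{\beta \in R(d)^\lambda,\ \langle\varphi,\beta\rangle>0}\beta$. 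Imposing the face condition $\langle\mu,\eta\rangle = \tfrac{1}{2}\langle\mu, R(d)^{\mu>0}\rangle$ and simplifying yields
\[
\langle\varphi, \sigma_{I^\varphi_\lambda \setminus I^-}\rangle - \langle\varphi, \sigma_{I^+}\rangle + \langle\varphi, \psi\rangle = \tfrac{1}{2}\langle\varphi,\sigma'\rangle.
\]
Each summand on the left is bounded above by $0$, $0$ and $\tfrac{1}{2}\langle\varphi,\sigma'\rangle$ respectively, the last bound being the maximum of $\langle\varphi,\cdot\rangle$ on the Levi sub-polytope $\textbf{W}(\lambda)_0$. Equality of the total then forces each summand to attain its extreme, giving $I^-=I^\varphi_\lambda$, $I^+=\emptyset$, and $\psi$ lying on the $\varphi$-maximal sub-face of $\textbf{W}(\lambda)_0$; the first two yield $I = I^\varphi_\lambda \sqcup J$ with $J=I^0 \subset A^\varphi_\lambda$, proving the forward direction.

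For the converse, the standing hypothesis forces $\psi$ to lie on the $\varphi$-maximal face of $\textbf{W}(\lambda)_0$, so I may write $\psi = \tfrac{1}{2}\sigma' + \psi_0$ with $\psi_0$ in the sub-polytope of $\textbf{W}(\lambda)_0$ indexed by root pairs pairing to zero with both $\lambda$ and $\varphi$. For any $I' = I^\varphi_\lambda \sqcup J'$ with $J' \subset A^\varphi_\lambda$, the computation above automatically gives $\langle\mu,\eta'\rangle = \tfrac{1}{2}\langle\mu, R(d)^{\mu>0}\rangle$, and a direct simplification using this decomposition of $\psi$ yields
\[
\eta' - \tfrac{1}{2}R(d)^{\mu>0} - r\tau_d = \tfrac{1}{2}\sigma_{w(A^\varphi_\lambda \setminus J')} - \tfrac{1}{2}\sigma_{wJ'} + w\psi_0.
\]
Since $wA^\varphi_\lambda \subset R(d)^\mu$ and $w\psi_0$ lies in the Minkowski summand of $\textbf{W}(\mu)_0$ indexed by root pairs pairing to zero with both $w\lambda$ and $\mu$ (disjoint from the index set of the $A^\varphi_\lambda$ contribution, which is $w\lambda$-positive), all the coefficients appearing land in $[0, 3/2]$ on pairwise disjoint root directions, and hence the right-hand side lies in $\textbf{W}(\mu)_0$. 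Combined with the maximality of $\langle\mu,\eta'\rangle$, this gives $\eta' \in F(\mu)$. The main technical hurdle is precisely this final convex-geometric bookkeeping: tracking how $w$ permutes the sign-decomposition of the weights of $R(d)$ and verifying that the Minkowski coefficients all fall in $[0,3/2]$ on pairwise disjoint root-pair index sets.
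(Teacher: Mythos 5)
Your argument is correct, and at its core it follows the same strategy as the paper's proof: decompose $\chi+\rho=\tfrac12 N^{\lambda>0}+\psi+v\tau_d$ with $\psi\in \textbf{W}(\lambda)_0$, transport the second face condition through $w$ to $\varphi=w^{-1}\mu$, and conclude by an extremality argument. The difference is in the bookkeeping. The paper subtracts the two face decompositions to obtain the vector identity \eqref{lambdavarphi} for $\sigma_I$ and then compares the $\varphi$-weights of the two sides of \eqref{lambdavarphi2} termwise, while you pair everything with $\mu$ (equivalently $\varphi$) and exploit equality in the inequalities $\langle\varphi,\sigma_{I^{\varphi}_\lambda\setminus I^-}\rangle\le 0$, $-\langle\varphi,\sigma_{I^+}\rangle\le 0$, $\langle\varphi,\psi\rangle\le\tfrac12\langle\varphi,\sigma'\rangle$; in particular your forward direction uses only the supporting-hyperplane equation $\langle\mu,\eta\rangle=\tfrac12\langle\mu,R(d)^{\mu>0}\rangle$ and never needs $\eta\in\textbf{W}(d)$, which is marginally leaner but substantively the same sign analysis. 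Where you add genuine content is the converse, which the paper dismisses with ``follows in a similar way'': you spell it out and correctly isolate the one extra input it requires, namely $\langle\varphi,\psi\rangle=\tfrac12\langle\varphi,\sigma'\rangle$, i.e.\ that $\psi$ lies on the $\varphi$-maximal face of $\textbf{W}(\lambda)_0$. This is not a consequence of $\chi+\rho\in F(\lambda)$ alone (for $\lambda$ central, $\mu$ nontrivial and $w=e$ the converse fails for $\chi+\rho$ off the face $F(\mu)$), so it must be read as retained from the standing assumption that some $I$ with $w*(\chi-\sigma_I)+\rho\in F(\mu)$ exists, exactly as you do; the paper's omitted converse argument needs the same input, and this reading is consistent with how the proposition is invoked in the proof of Proposition~\ref{prop42}. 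Finally, note that what you prove (and what the paper's own proof establishes) is $I=I^{\varphi}_\lambda\sqcup J$ with $J\subset A^{\varphi}_\lambda$: in \eqref{IJ} the clause $\langle\varphi,\beta\rangle=0$ should read $\langle\varphi,\beta\rangle<0$, a typo you silently and correctly fixed. Your opening reduction (``$F(\cdot)$ depends only on the cocharacter modulo the center'') is not literally true for $F$ as defined by the equation in Subsection~\ref{ss1}, but since you immediately pass to the face decomposition $\chi+\rho=\tfrac12 R(d)^{\lambda>0}+\psi+r\tau_d$, which is exactly the reading the paper itself uses, this is a harmless imprecision rather than a gap.
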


\begin{proof}
For two cocharacters $\tau$ and $\tau'$, 
we use the notations \[N^{\tau>0}:=\sum_{A_\tau}\beta,\ N^{\tau'=0, \tau>0}=\sum_{A^{\tau'}_\tau}\beta.\]
Let $v=\langle 1_d, \chi\rangle$. Write 
\begin{align*}
    \chi-v\tau_d+\rho&=\frac{1}{2}N^{\lambda>0}+\psi,\\
    w*(\chi-v\tau_d-\sigma_I)+\rho&=\frac{1}{2}N^{\mu>0}+\phi',\\
    \chi-v\tau_d-\sigma_I+\rho&=\frac{1}{2}N^{\varphi>0}+\phi,
\end{align*}
where $\psi\in \textbf{W}(\lambda)_0$ and $\phi\in \textbf{W}(\varphi)_0$, see \cite[Proposition 3.4]{P2}.
Then 
\begin{equation}\label{lambdavarphi}
\sigma_I=N_\lambda^\varphi+\left(\frac{1}{2}N^{\varphi=0, \lambda>0}-\phi\right)+\left(\psi-\frac{1}{2}N^{\lambda=0, \varphi>0}\right).
\end{equation}
The weight $\widetilde{\phi}:=\frac{1}{2}N^{\varphi=0, \lambda>0}-\phi$ is a sum with nonnegative coefficients of weights $\beta$ such that $\langle \varphi, \beta\rangle=0$ and $\langle \lambda, \beta\rangle>0$
and weights $\beta'$ such that $\langle \varphi, \beta'\rangle=\langle\lambda, \beta'\rangle=0$. The weight $\widetilde{\psi}:=\psi-\frac{1}{2}N^{\lambda=0, \varphi>0}$ is a sum with nonnegative coefficients of weights $\beta$ such that $\langle \lambda, \beta\rangle=0$ and $\langle \varphi, \beta\rangle<0$ and weights $\beta'$ such that $\langle \varphi, \beta'\rangle=\langle\lambda, \beta'\rangle=0$. We denote by $\sigma^{\lambda+}_{\varphi0}$ a sum with nonnegative coefficients of weights $\beta$ such that $\langle\lambda, \beta\rangle>0$ and $\langle \varphi, \beta\rangle=0$ etc. 
Then we can write \begin{equation}\label{sigmaN}
    \sigma_I-N^\varphi_\lambda=\sigma^{\lambda+}_{\varphi0}+\sigma^{\lambda+}_{\varphi+}-n^{\lambda+}_{\varphi-},
\end{equation} where all the sums on the right hand side are further partial sums of weights in $A_\lambda$. We can rewrite \eqref{lambdavarphi} as
\begin{equation}\label{lambdavarphi2}
\sigma^{\lambda+}_{\varphi0}+\sigma^{\lambda+}_{\varphi+}=n^{\lambda+}_{\varphi-}+\widetilde{\phi}^{\lambda+}_{\varphi0}+\widetilde{\phi}^{\lambda0}_{\varphi0}+\widetilde{\psi}^{\lambda0}_{\varphi-}+\widetilde{\psi}^{\lambda0}_{\varphi0}.
\end{equation}
The $\varphi$-weight of the left hand is nonnegative, while the $\varphi$-weight of the right hand side is nonpositive. Thus $\sigma^{\lambda+}_{\varphi+}=n^{\lambda+}_{\varphi-}=\widetilde{\psi}^{\lambda0}_{\varphi-}=0$. In particular, \eqref{sigmaN} becomes
\[\sigma_I=N^\varphi_\lambda+\sigma^{\lambda+}_{\varphi0},\]
which implies the first direction. The converse follows in a similar way.
\end{proof}

\begin{prop}\label{prop:help}
For a subset $J\subset A^{\varphi}_\lambda$, the set $J'=\{w\beta \mid \beta\in J\}$ is a subset of $I_{\kappa\mu}:=\{\beta\text{ weight of }R(d)^\mu\mid\langle \kappa, \beta\rangle>0\}$.
This transformation induces a bijection of sets $A^{\varphi}_\lambda\xrightarrow{\sim} I_{\kappa\mu}$.
\end{prop}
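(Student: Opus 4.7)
The plan is to unfold the combinatorial definitions by introducing the two bijections
\[
\sigma_\nu,\,\sigma_\kappa\colon \{(i,j,r):1\leq i\leq k,\ 1\leq j\leq s,\ 1\leq r\leq f_{ij}\}\xrightarrow{\sim}\{1,\ldots,d\}
\]
induced respectively by the row-major ordering of \eqref{permutations10} and the column-major ordering of \eqref{permutations11}, and writing $(i^\nu_p,j^\nu_p,r^\nu_p):=\sigma_\nu^{-1}(p)$, $(i^\kappa_p,j^\kappa_p,r^\kappa_p):=\sigma_\kappa^{-1}(p)$ for $p\in\{1,\ldots,d\}$. By construction $\nu_p$ depends only on $(i^\nu_p,j^\nu_p)$ and is strictly increasing in lex order on $(i,j)$, so $\lambda_p$, which coarsens $\nu$ by grouping blocks of common $i$, depends only on $i^\nu_p$ and is strictly increasing in $i$; symmetrically, $\kappa_p$ depends only on $(i^\kappa_p,j^\kappa_p)$ and is strictly increasing in lex order on $(j,i)$, while $\mu_p$ coarsens $\kappa$, depends only on $j^\kappa_p$, and is strictly increasing in $j$. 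The minimal-length coset representative $w=w_C$ of \eqref{permutations} is precisely the permutation sending the $r$-th element of the $\nu$-block $(i,j)$ to the $r$-th element of the $\kappa$-block $(i,j)$, that is, $\sigma_\kappa^{-1}(w(p))=\sigma_\nu^{-1}(p)$ for all $p$; in particular $(i^\kappa_{w(p)},j^\kappa_{w(p)})=(i^\nu_p,j^\nu_p)$.

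Combining these observations with $(w^{-1}\mu)_p=\mu_{w(p)}$ yields $\varphi_p=\mu^{(j^\nu_p)}$, which depends only on $j^\nu_p$ and is strictly increasing in $j$, and $\kappa_{w(p)}=\kappa^{(i^\nu_p,j^\nu_p)}$, which is strictly increasing in lex order on $(j^\nu_p,i^\nu_p)$. For a $T(d)$-weight $\beta=\beta_a-\beta_b$ of $\mathfrak{g}$, set $(i_*,j_*):=(i^\nu_{*},j^\nu_{*})$; then
\begin{align*}
\langle\lambda,\beta\rangle>0&\iff i_a>i_b, & \langle\varphi,\beta\rangle=0&\iff j_a=j_b,\\
\langle\mu,w\beta\rangle=0&\iff j_a=j_b, & \langle\kappa,w\beta\rangle>0&\iff (j_a,i_a)>(j_b,i_b)\text{ in lex.}
\end{align*}
Under the constraint $j_a=j_b$, the right-hand condition on the second line simplifies to $i_a>i_b$, so the inequalities cutting out the $\mathfrak{g}$-part of $A^\varphi_\lambda$ agree with those cutting out $w^{-1}(I_{\kappa\mu}\cap\{\text{weights of }\mathfrak{g}\})$. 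Hence $\beta\mapsto w\beta$ bijects the $\mathfrak{g}$-parts of $A^\varphi_\lambda$ and $I_{\kappa\mu}$; injectivity is automatic, and surjectivity follows by reversing the equivalences for $\gamma\in I_{\kappa\mu}$.

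Finally, since $R(d)=\mathfrak{g}^{\oplus 3}$ and the three summands differ only by an independent $T=(\mathbb{C}^*)^2$-character, every $(T\times T(d))$-weight of $R(d)$ is a weight of $\mathfrak{g}$ tagged with one of three independent $T$-characters. The cocharacters $\lambda,\mu,\nu,\kappa,\varphi$ all lie in $T(d)$ alone, and $w\in\mathfrak{S}_d$ acts trivially on the $T$-factor, so the defining inequalities of $A^\varphi_\lambda$ and $I_{\kappa\mu}$ never involve the $T$-character. The $\mathfrak{g}$-level bijection therefore extends summand-wise to the required bijection $A^\varphi_\lambda\xrightarrow{\sim}I_{\kappa\mu}$. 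The main subtlety, and where I expect the bookkeeping to take the most care, is that $w^{-1}\kappa\neq\nu$ in general: one cannot deduce the claim from a naive identity of cocharacters, and must instead leverage the refined block-preserving description of the minimal-length representative $w$ recalled above.
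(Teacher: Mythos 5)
Your proof is correct and follows essentially the same route as the paper's: both exploit that $w=w_C$ is the minimal-length, block-order-preserving rearrangement from the row-major to the column-major blocks, so that for $\beta=\beta_a-\beta_b$ the conditions $\langle\lambda,\beta\rangle>0$, $\langle\varphi,\beta\rangle=0$ translate exactly into $\langle\mu,w\beta\rangle=0$, $\langle\kappa,w\beta\rangle>0$. The only difference is presentational: the paper checks the key positivity by explicit case analysis in the case $k=s=2$ (with the inverse given by $L\mapsto w^{-1}L$), whereas your lex-order bookkeeping carries out the same verification uniformly for arbitrary $k,s$.
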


\begin{proof}
It suffices to check the first claim. To construct an inverse of this transformation, send $L\subset I_{\kappa\mu}$ to $L^\circ=\{w^{-1}\beta\mid\beta\in L\}$.

Let $\beta\in A^\varphi_\lambda$. Recall that $\varphi=w^{-1}\mu$, so we have that $\langle w^{-1}\mu, \beta\rangle=0$, and thus $\langle \mu, w\beta\rangle=0$.

It suffices to show that if a weight $\beta_i-\beta_j$ is in $A^{\varphi}_\lambda$, then $\langle \kappa, \beta_{w(i)}-\beta_{w(j)}\rangle>0$. 
For simplicity, we discuss the case when $k=s=2$. Rename $f_{11}=f_1$, $f_{12}=f_2$, $f_{21}=f_3$, $f_{22}=f_4$. 
The permutation $w$ is
\begin{align*}
    w(i)=\begin{cases}
    i+f_3,\text{ if }f_1+1\leq i\leq f_1+f_2,\\
    i-f_2,\text{ if }f_1+f_2+1\leq i\leq f_1+f_2+f_3,\\
    i,\text{ otherwise}.
    \end{cases}
\end{align*}
We have that
\[\langle \lambda, \beta_i-\beta_j\rangle>0\text{ and }\langle \mu, \beta_{w(i)}-\beta_{w(j)}\rangle=0,\]
so one of two possibilities happens: 
\begin{align*}
    (1)\, & f_1+f_2+f_3+1\leq i\leq f_1+f_2+f_3+f_4\text{ and }f_1+1\leq j\leq f_1+f_2,\\
    (2)\, & f_1+f_2+1\leq i\leq f_1+f_2+f_3\text{ and }1\leq j\leq f_1.
\end{align*}
In case (1), we have that $i=w(i)$ and $f_1+f_3+1\leq w(j)\leq f_1+f_3+f_2$, and then $\langle \kappa, \beta_{w(i)}-\beta_{w(j)}\rangle>0$. In case (2), we have that $f_1+1\leq w(i)\leq f_1+f_3$ and $j=w(j)$, and then $\langle \kappa, \beta_{w(i)}-\beta_{w(j)}\rangle>0$.
\end{proof}



\subsection{The bialgebra structure under the Koszul equivalence}\label{s43}
Let $(d, v)\in \mathbb{N}\times\mathbb{Z}$ be coprime integers. 
For $n\in\mathbb{N}$, denote by $R_n$ the set of ordered partitions $A=(n_i)_{i=1}^k$ of $n$ with $n_i\geq 1$. 
For each such partition $A$ of $n$, denote also by $A$ the partition $(n_id, n_iv)_{i=1}^k$ of $(nd, nv)$. Let $\gamma_i$ be the weights of $T$ for $i\in\{1, 2\}$ with $q^{\gamma_i}=q_i\in \mathbb{K}$. Let $\gamma=\gamma_1+\gamma_2$ and let $q^\gamma=q_1q_2$. For simplicity, we will denote $q^\gamma$ just by $q$. 


\subsubsection{}\label{coproductdiml}
The coproduct \eqref{def:coproduct} induces coproduct maps 
\[\Delta_{AC} \colon \mathbb{T}_A\to \mathbb{T}_C\] for $C$ a refinement of $A$ as follows.
Recall the Koszul equivalence 
\begin{equation}\label{koszuleqev}
\Phi: \mathbb{T}(e)_v\cong \mathbb{S}^{\text{gr}}(e)_v
\end{equation}
for all pairs $(e, v)\in \mathbb{N}\times\mathbb{Z}$. 
Let $A=(n_id, w_i)_{i=1}^k$ be a partition of $(nd, nv)$ with associated prime partition $A'=(n_id, n_iv)_{i=1}^k$, see Subsection \ref{prime}. Let $\mathbb{T}_A:=\otimes_{i=1}^k \mathbb{T}(n_id)_{n_iv}$.
There is a Koszul equivalence 
\[\Phi_A\colon \mathbb{T}_A
\xrightarrow{\sim}\mathbb{S}^{\text{gr}}_A.\]
Let $\Phi_A^{-1}$ be its inverse. 
For $V$ a vector space of dimension $nd$, let $\mathfrak{l}:=\mathrm{End}(V)^{\lambda_A}$. 
Let $\lambda_C$ be the antidominant cocharacter of $T(nd)\subset GL(nd)^{\lambda_A}$ corresponding to $C$,
let $\omega_{AC}:=\det\left( (\mathfrak{l}^{\lambda_C>0})^{\vee}\right)[-\dim \mathfrak{l}^{\lambda_C>0}]
$, where $T$ acts on $\mathfrak{l}$ with weight $\gamma:=\gamma_1+\gamma_2$. 
Note that $\det\left( (\mathfrak{l}^{\lambda_C>0})^{\vee}\right)$ is a character of 
$GL(d)^{\lambda_C}$, hence it determines a line bundle on $\mathcal{X}(nd)^{\lambda_C}$. 
We define the functor 
\begin{align*}
    \Delta_{AC}\colon
    \mathbb{T}_{A}\to \mathbb{T}_C
    \end{align*}
by the commutative diagram 
\begin{align*}
    \xymatrix{
    \mathbb{T}_A \ar[rr]^-{\Delta_{AC}} \ar[d]^-{\sim}_-{\Phi_A} & & \mathbb{T}_C \ar[d]_-{\sim}^-{\Phi_C} \\
    \mathbb{S}_A^{\rm{gr}} \ar[rr]_{\widetilde{\Delta}_{AC}(-) \otimes \omega_{AC}^{-1}} & & \mathbb{S}_C^{\rm{gr}}. 
    }
\end{align*}

When $A=(nd, nv)$ and $C$ is a two term partition $(n_id, n_iv)_{i=1}^2$, the term $\omega_{n_1,n_2}:=\omega_{AC}$ is \eqref{def:factordimred} for $\dim V_i=n_id$ for $i\in\{1, 2\}$. For these partitions, we let $\Delta_{n_1 n_2}:=\Delta_{AC}$.
For such partitions $A$ and $C$, consider the $(T\times T(d))$-weight 
\begin{align*}\label{def:nusum}
   \nu_{n_1, n_2}:=\sum_{i>n_1d\geq j}(\beta_i-\beta_j-\gamma).
\end{align*}
Then $\omega_{n_1, n_2}=(-1)^{(n_1d)\cdot (n_2d)}q^{\nu_{n_1, n_2}}$. Alternatively, $\omega_{n_1, n_2}$ measures a ratio (called renormalized twist in \cite[Proof of Lemma 2.3.7]{VaVa}) constructed from the shuffle product for $\widetilde{m}$ with kernel 
\[	\xi'(x):=\frac{(1-q_1^{-1}x)(1-q_2^{-1}x)(1-qx)}{1-x},\] and the shuffle product for $m$ with kernel $\xi(x)$, see the computation in \cite[Proof of Lemma 4.9]{PT0}.

\subsubsection{}
Let $(d, v)\in \mathbb{N}\times\mathbb{Z}$ be coprime integers and let $n\in\mathbb{N}$.
Consider partitions $A=(d_i)_{i=1}^k$ and $B=(e_i)_{i=1}^s$ of $n$. 
Let $S^B_A$ be the set of partitions $C=(f_{i})_{i=1}^{l}$ of $n$ with $l=ks$ such that 
\begin{align*}
    f_{(i-1)s+1}+f_{(i-1)s+2}+\ldots+f_{is}&=d_i\text{ for }1\leq i\leq k,\\
    f_j+f_{s+j}+\ldots+f_{(k-1)s+j}&=e_j\text{ for }1\leq j\leq s.
\end{align*}
Let $D$ be the partition on $n$ constructed as in \eqref{partitionD}. Define $m'_{BC}:=m_{BC}\circ\text{sw}_{C}$, for $\text{sw}_{C}$ as in \eqref{sw}.
Theorem \ref{prodcoprod} implies the following: 
 
 \begin{cor}\label{cor:44}
 In the above setting, the following diagram commutes:
\begin{equation*}
    \begin{tikzcd}
    K_T(\mathbb{T}_A)'\arrow[r, "\widetilde{m}_A"]\arrow[d, "\bigoplus \widetilde{\Delta}_{AC}"]& K_T(\mathbb{T}(d)_w)'\arrow[d, "\widetilde{\Delta}_B"]\\
    \bigoplus_{C\in S^B_A} K_T(\mathbb{T}_C)'\arrow[r, "\bigoplus \widehat{m}_{CB}"]& K_T(\mathbb{T}_B)'.
    \end{tikzcd}
\end{equation*}

 \end{cor}


\begin{proof}
For simplicity of notation, we assume that $k=l=2$, that $A$ is the partition $a+b=n$, and that $B$ is the partition $c+e=n$. Then $S:=S^B_A$ is the set of partitions $C=(f_i)_{i=1}^4\in\mathbb{N}^4$ such that 
 \[ f_1+f_2=a,\,
      f_3+f_4=b,\,
     f_1+f_3=c,\,
      f_2+f_4=e.
\]
  Note that, for such a partition $C$, the partition $D$ is $(f_1, f_3, f_2, f_4)$.
  The swap morphism is:
 \begin{align*}
    \text{sw}_{ab}: K_T\left(\mathbb{T}(ad)_{av}\right)'\otimes K_T\left(\mathbb{T}(bd)_{bv}\right)' &\to K_T\left(\mathbb{T}(bd)_{bv}\right)'\otimes K_T\left(\mathbb{T}(ad)_{av}\right)',\\
    x\otimes y&\mapsto y\otimes x.
 \end{align*}
 We abuse notation and write $m_{ab}$ instead of $m_{ad, bd}$ etc. 
We then need to show that the following diagram commutes:
\begin{equation*}
    \begin{tikzcd}
    K_T\left(\mathbb{T}(ad)_{av}\right)'\otimes K_T\left(\mathbb{T}(bd)_{bv}\right)'\arrow[r, "m_{ab}"]\arrow[d, "\Delta"]& K_T\left(\mathbb{T}(nd)_{nv}\right)'\arrow[d, "\Delta_{ce}"]\\
    \bigoplus_{S} \bigotimes_{i=1}^4 K_T\left(\mathbb{T}(f_id)_{f_iv}\right)'\arrow[r, "m'"]& K_T\left(\mathbb{T}(cd)_{cv}\right)'\otimes K_T\left(\mathbb{T}(ed)_{ev}\right)',
    \end{tikzcd}
\end{equation*}
where $m':=\bigoplus_S \left(m_{f_1f_3}\otimes m_{f_2 f_4}\right)\left( 1\otimes \text{sw}_{f_2f_3}\otimes 1\right)$ and $\Delta:=\bigoplus_S \Delta_{f_1f_2}\otimes \Delta_{f_3f_4}$.

For $m\in \mathbb{N}$, recall that $\sigma_m:=m\tau_m=\sum_{i=1}^m \beta_i$. Then $\sigma_{ad}=\sigma_{f_1d}+\sigma_{f_2d}$ etc.
In this proof, we will use the notation $z:=q^{-\gamma}$ instead of $q^{-1}$ to reduce the use of the letter $q$. We use the notation $\Phi_n$ for the Koszul equivalence \eqref{koszuleqev} for $(nd, nv)$.
Then 
\[q^{\nu_{a, b}}=q^{-bd\sigma_{ad}}q^{ad\sigma_{bd}}z^{abd^2},\,\omega_{a,b}=q^{-bd\sigma_{ad}}q^{ad\sigma_{bd}}(-z)^{abd^2}.\]
Fix a tuplet $C=(f_i)_{i=1}^4$ as above and let $w=w_C\in \mathfrak{S}_{nd}$ be its corresponding Weyl element as in Subsection \ref{nu}.
Let $x_m\in K_T\left(\mathbb{T}(md)_{mv}\right)$ for $m\in \{f_i, a, b, n\mid1\leq i\leq 4\}$.
By the discussions in Subsections \ref{subsection211} and \ref{coproductdiml}, we have that 
\begin{align*}
    m_{ab}(x_a\boxtimes x_b)&=
    \Phi_n^{-1}
    \widetilde{m}_{ab}
    \left(
    \Phi_a(x_a)q^{-bd\sigma_{ad}}\boxtimes \Phi_b(x_b)q^{ad\sigma_{bd}}(-z)^{d^2ab}\right),\\
    \Delta_{ab}(x_n)&=
    \Phi_a^{-1}\boxtimes\Phi^{-1}_b
    \left(
    \left(\widetilde{\Delta}_{ab}\Phi_n(x_n)\right)q^{bd\sigma_{ad}}q^{-ad\sigma_{bd}}(-z)^{-d^2ab}
    \right),\\
    \widetilde{\text{sw}}_{C}\left(x_{f_1}\boxtimes x_{f_2}\boxtimes x_{f_3}\boxtimes x_{f_4}\right)&=
    x_{f_1}\boxtimes \left(x_{f_3}q^{2f_3\sigma_{f_2}}\right)\boxtimes \left(x_{f_2}q^{-2f_2\sigma_{f_3}}\right)\boxtimes x_{f_4}.
\end{align*}
We use the shorthand notations
$\text{sw}_{23}=1\boxtimes \text{sw}_{f_2 f_3}\boxtimes 1$,
$\widetilde{m}_{13}=\widetilde{m}_{f_1f_3}$, $\widetilde{m}_C:=\widetilde{m}_{13}\boxtimes \widetilde{m}_{24}$ etc. in what follows.
We compute, by ignoring the $z$ factor for simplicity of notation:
\begin{align}\label{deltam}
    &\Delta_{ce}m_{ab}(x_a\boxtimes x_b)=\\
    &\Phi_c^{-1}\boxtimes\Phi_e^{-1}\left(\widetilde{\Delta}_{cf}\left(\widetilde{m}_{ab}(\Phi_a(x_a)q^{-bd\sigma_{ad}}\boxtimes \Phi_b(x_b)q^{ad\sigma_{bd}})\right)q^{ed\sigma_{cd}}q^{-cd\sigma_{ed}}\right)=\nonumber\\
    &\Phi_c^{-1}\boxtimes \Phi_e^{-1}
    \sum_{S}
    \widetilde{m}_{C}\left(\widetilde{\text{sw}}_{C}\left(
    \widetilde{\Delta}_{12}(\Phi_a(x_a)q^{-bd\sigma_{ad}})\boxtimes \widetilde{\Delta}_{34}(\Phi_b(x_b)q^{ad\sigma_{bd}})
    \right)\right)q^{ed\sigma_{cd}}q^{-cd\sigma_{ed}}.\nonumber
    \end{align}
    We next compute 
    \begin{align}\label{mdelta}
    &\sum_S m'\Delta(x_a\boxtimes x_b)=\\
    &\Phi_c^{-1}\boxtimes\Phi_e^{-1} \sum_{S}
    \left(\widetilde{m}_{13}\boxtimes \widetilde{m}_{24}\right)\left(q^{-f_3d\sigma_{f_1d}}\boxtimes q^{f_1d\sigma_{f_3d}}\boxtimes q^{-f_4d\sigma_{f_2d}}\boxtimes q^{f_2d\sigma_{f_4d}}\right)\nonumber
    \\
    &\text{sw}_{23}
    \left(\left(\widetilde{\Delta}_{12}(\Phi_a(x_a))q^{f_2d\sigma_{f_1d}}q^{-f_1d\sigma_{f_2d}}\right)
    \boxtimes \left(\widetilde{\Delta}_{34}(\Phi_b(x_b))q^{f_4d\sigma_{f_3d}}q^{-f_3d\sigma_{f_4d}}\right)\right).\nonumber
\end{align}    
We claim that the expressions \eqref{deltam} and \eqref{mdelta} are equal. We only match the coefficients in $\mathbb{K}$ corresponding to $f_1$ and $f_2$ as the computations for $f_3$ and $f_4$ are similar:
\begin{align*}
    q^{(f_2+f_4)d\sigma_{f_1d}}q^{-(f_3+f_4)d\sigma_{f_1d}}&=q^{f_2d\sigma_{f_1d}}q^{-f_3d\sigma_{f_1d}},\\
    q^{-(f_1+f_3)d\sigma_{f_2d}}q^{-(f_3+f_4)d\sigma_{f_2d}}\cdot q^{2f_3d\sigma_{f_2d}}&=q^{-f_1d\sigma_{f_2d}}q^{-f_4d\sigma_{f_2d}}.
\end{align*}
Finally, the $z$ factors in $\Delta_{ce}m_{ab}$ and $m'\Delta$ are equal because 
\[(-z)^{d^2ab-d^2ce}=(-z)^{-d^2f_1f_2-d^2f_3f_4+d^2f_1f_3+d^2f_2f_4}.\]
\end{proof}

\subsection{The localized bialgebra}\label{s44}
Recall that for $V$ a $\mathbb{K}$-module, we let $V_\mathbb{F}:=V\otimes_{\mathbb{K}}\mathbb{F}$. 
Recall that $(d, v)\in \mathbb{N}\times\mathbb{Z}$ are coprime.
 Consider the $\mathbb{N}$-graded $\mathbb{F}$-vector space \begin{equation}\label{V}
     V:=K(\mathcal{D}_{d, v})_{\mathbb{F}}=
     \bigoplus_{n\geq 0} K_T\left(\mathbb{T}(nd)_{nv}\right)_{\mathbb{F}}.
 \end{equation}
 We next explain that the operations $m$ and $\Delta$ endow $V$ with the structure of a commutative and cocommutative $\mathbb{F}$-bialgebra. 
We show this by an explicit computation using the generators of these vector spaces, see \eqref{basis:S'} and \eqref{SS'}. Recall the complex $\mathcal{E}_{e, v}$ from Definition \ref{definition:Edw} and the shuffle elements
$A'_{e,v}$ and $A_{e, v}$ from \eqref{Ambullet} and \eqref{def:Ambullet2} for a pair $(e, v)\in\mathbb{N}\times\mathbb{Z}$.

\begin{prop}\label{commutative}
   Let $a, b\in \mathbb{N}$ with $a+b=n$.
   Then $[\mathcal{E}_{ad, av}]\cdot[\mathcal{E}_{bd, bv}]=[\mathcal{E}_{bd, bv}]\cdot[\mathcal{E}_{ad, av}]$ in $K_T\left(\mathbb{T}(nd)_{nv}\right)$.
\end{prop}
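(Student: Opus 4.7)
The plan is to reduce the commutation relation to a computation in the shuffle algebra. By the algebra isomorphism (\ref{isom:S}), the map $i_{\ast}$ intertwines the $T$-equivariant Hall product on $\bigoplus_{e} G_T(\mathscr{C}(e))_{\mathbb{F}}$ with the shuffle product on $\mathcal{S}_{\mathbb{F}}$. Since $K_T(\mathbb{T}(nd)_{nv})' \hookrightarrow K_T(\mathcal{Y}(nd))$ via $i_{\ast}$ (as used throughout Section~\ref{s3} and Subsection~\ref{s42}), it suffices to establish
\[
i_{\ast}[\mathcal{E}_{ad, av}] \ast i_{\ast}[\mathcal{E}_{bd, bv}] = i_{\ast}[\mathcal{E}_{bd, bv}] \ast i_{\ast}[\mathcal{E}_{ad, av}] \quad \text{in } \mathcal{S}_{\mathbb{F}}.
\]

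The next step is to plug in the explicit formula (\ref{elem:E}): $i_{\ast}[\mathcal{E}_{e, v}] = (1-q_1^{-1})^{e-1}(1-q_2^{-1})^{e-1} A_{e, v}$. Since the prefactors lie in the center $\mathbb{K}$ of the shuffle algebra, the desired equality reduces to $A_{ad, av} \ast A_{bd, bv} = A_{bd, bv} \ast A_{ad, av}$. Transporting via the isomorphism (\ref{SS'}) of $\mathcal{S}_{\mathbb{F}}$ with $\mathcal{S}'_{\mathbb{F}}$, and using that $A_{e, v}$ corresponds to a $\mathbb{K}$-multiple of $A'_{e, v}$, the claim becomes
\[
A'_{ad, av} \ast A'_{bd, bv} = A'_{bd, bv} \ast A'_{ad, av} \quad \text{in } \mathcal{S}'_{\mathbb{F}}.
\]

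Since both $(ad, av)$ and $(bd, bv)$ have the \emph{same slope} $v/d$, the basis description (\ref{basis:S'}) of $\mathcal{S}'$ — in which the tuples $(d_i, v_i)$ are taken unordered within subtuples of equal slope — forces both orderings to represent the same basis element of $\mathcal{S}'$, yielding the desired equality. For a self-contained verification one can instead perform the shuffle computation directly: expanding both products according to (\ref{def:shuffle}) (with kernel $w(x)$), both give a single Sym-expression over $\mathfrak{S}_{(a+b)d}$ in which the two raw integrands differ only by swapping $\xi(z_i z_j^{-1})$ and $\xi(z_j z_i^{-1})$ for indices $i, j$ across the two blocks, and this difference is absorbed into the symmetrization precisely when the exponent data from (\ref{def:mi}) depend only on $v/d$ — which is exactly our hypothesis.

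The main obstacle is the final step: turning the parametrization statement (\ref{basis:S'}) into an honest equality of shuffle products rather than merely identifying their spans. This is equivalent to the well-known fact that, within the shuffle algebra for $\mathbb{C}^3$, the slope subalgebras are commutative (a Heisenberg-type structure). We may appeal to Negu\c{t}'s result \cite[Equation~(2.12)]{N} for this, or alternatively check the commutation on the generators $A'_{d, v}$ of fixed slope $v/d$ by the explicit symmetric-function computation sketched above.
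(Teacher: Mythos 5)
Your main route is essentially the paper's proof: push the classes into the shuffle algebra via $i_{\ast}$ and \eqref{isom:S}, strip off the central $\mathbb{K}$-prefactors from \eqref{elem:E}, pass to $\mathcal{S}'_{\mathbb{F}}$ by \eqref{SS'}, and conclude because $A'_{ad,av}$ and $A'_{bd,bv}$ lie in the subalgebra generated by the elements $A'_{e,u}$ of the fixed slope $v/d$, which is commutative. This last fact is exactly what the paper invokes, citing Negu\c{t} \cite[Subsection~3.2]{Ne}, so when you lean on it your argument is complete and coincides with the paper's.

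Two caveats about the ``self-contained'' alternatives you sketch. First, as you acknowledge, the decomposition \eqref{basis:S'} is a spanning/parametrization statement; by itself it does not identify the two orderings of a product of equal-slope generators as equal elements, so it cannot replace the commutativity input. Second, the direct symmetrization argument is not correct as stated: after relabelling variables inside $\mathrm{Sym}$, the two products $A'_{ad,av}\ast A'_{bd,bv}$ and $A'_{bd,bv}\ast A'_{ad,av}$ differ by replacing the inter-block kernel $w(z_iz_j^{-1})$ by $w(z_jz_i^{-1})$, and neither $w$ nor $\xi$ is invariant under $x\mapsto x^{-1}$ (their zeroes sit at $q_1^{-1},q_2^{-1},\ldots$ rather than $q_1,q_2,\ldots$). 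If that swap were ``absorbed into the symmetrization,'' the whole shuffle algebra would be commutative, which it is not; the slope condition enters in a much less formal way. So the commutativity of the fixed-slope subalgebra is a genuine theorem (the Heisenberg-type structure) and must be cited or proved; note also that \cite[Equation~(2.12)]{N} is the basis statement \eqref{basis:S'}, not that commutativity, so the correct reference is \cite[Subsection~3.2]{Ne} as in the paper.
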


\begin{proof}
    By \eqref{basis:S'} and \eqref{SS'}, it suffices to check the statement for $A_{ad, aw}$ and $A_{bd, bw}$, or alternatively for $A'_{ad, aw}$ and $A'_{bd, bw}$ for $a, b\in\mathbb{N}$. 
    Any two such elements commute because they are in the subalgebra of $\mathcal{S}'_{\mathbb{F}}\cong \mathcal{S}_{\mathbb{F}}$ generated by elements $A'_{e, v}$ of fixed slope $\frac{v}{e}$, and such algebra are commutative, see for example \cite[Subsection 3.2]{Ne}. 
\end{proof}

For the following proposition, it is convenient to introduce the element 
\[\hat{A}_{nd, nv}:=\left(-q^{-1}\right)^{n-1}A_{nd, nv}=
\frac{\left(-q^{-1}\right)^{n-1}}{(1-q_1^{-1})^{nd-1}(1-q_2^{-1})^{nd-1}}[\mathcal{E}_{nd, nv}]\in K_T(\mathbb{T}(nd)_{nv})_\mathbb{F}.\]

\begin{prop}\label{1236bis}
Let $a, b, n\in\mathbb{N}$ be such that $a+b=n$. Then 
    \[\Delta_{ab}\left(\hat{A}_{nd, nv}\right)=
    \hat{A}_{ad, av}\boxtimes \hat{A}_{bd, bv}.\]
\end{prop}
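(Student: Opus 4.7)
The plan is to reduce the identity to an explicit computation in the shuffle algebra. By the formula \eqref{elem:E}, the element $\hat{A}_{nd, nv}$ is, up to a scalar in $\mathbb{F}^\ast$, the image under $i_\ast$ of $[\mathcal{E}_{nd, nv}]$, and by \eqref{isom:S} the map $i_\ast$ is injective on the localized equivariant K-theory of $\mathbb{T}(nd)_{nv}\subset D^b(\mathscr{C}(nd))$. It therefore suffices to compute the shuffle-algebra representative of $\Delta_{ab}(\hat{A}_{nd,nv})$ and identify it with that of $\hat{A}_{ad,av}\boxtimes\hat{A}_{bd,bv}$ in
\[
\mathbb{F}[z_1^{\pm 1},\ldots,z_{ad}^{\pm 1}]^{\mathfrak{S}_{ad}}\otimes_{\mathbb{F}}\mathbb{F}[z_{ad+1}^{\pm 1},\ldots,z_{nd}^{\pm 1}]^{\mathfrak{S}_{bd}}.
\]

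The next step is to realize $\Delta_{ab}$ as a shuffle-level operation. By \eqref{def:coproduct} and Subsection \ref{coproductdiml}, $\Delta_{ab}$ is the composition of the Koszul equivalence $\Phi^{-1}$ with $\widetilde{\Delta}_{ab}=(q_\mu^\ast)^{-1}\beta_b\,p_\mu^\ast$ and the twist by $\omega_{a,b}^{-1}=(-1)^{abd^2}q^{-\nu_{a,b}}$. For the dominant cocharacter $\mu$ associated to the partition $(ad,bd)$ of $nd$, one computes $n_\mu/2=\langle\mu,\mathfrak{g}^{\mu>0}\rangle=abd^2$. Unwinding through the forget-the-potential map \eqref{definition:forgetpotential} and \cite[Lemma~4.9]{PT0}, the operation $\widetilde{\Delta}_{ab}$ acts on shuffle representatives by a leading-order extraction: given $f(z_1,\ldots,z_{nd})\in \mathbb{F}[z_1^{\pm 1},\ldots,z_{nd}^{\pm 1}]^{\mathfrak{S}_{nd}}$, one rescales $z_j\mapsto tz_j$ for $ad<j\leq nd$ and extracts the coefficient picking out the $\mu$-weight $abd^2$ piece.

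The third step is to apply this recipe to $A_{nd,nv}$ given by \eqref{def:Ambullet2}. The outer symmetrization over $\mathfrak{S}_{nd}$ decomposes along the cosets $\mathfrak{S}_{nd}/(\mathfrak{S}_{ad}\times\mathfrak{S}_{bd})$; only the trivial coset survives the leading-$t$ extraction, because on any nontrivial coset at least one cross kernel $\xi(tz_jz_i^{-1})$ (for the permuted indices) is forced into a lower $t$-order. On the trivial coset, each of the $ad\cdot bd$ cross kernels has leading behavior $\xi(tz_jz_i^{-1})\sim -q^{-1}tz_jz_i^{-1}$ as $t\to\infty$, and their product yields exactly the scalar that absorbs the twist $\omega_{a,b}^{-1}$. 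The monomial prefactor, the intra-block pole chains, and the single bridge pole $(1-q^{-1}z_{ad}^{-1}z_{ad+1})^{-1}$ all split cleanly into the prefactors for $A_{ad,av}$ and $A_{bd,bv}$; the bridge pole contributes one extra factor of $-q^{-1}$ that accounts for the normalization discrepancy $(-q^{-1})^{n-1}=(-q^{-1})^{a-1}(-q^{-1})^{b-1}\cdot(-q^{-1})$ built into $\hat{A}$. The remaining sum over $\mathfrak{S}_{ad}\times\mathfrak{S}_{bd}$ then manifestly factors as $\hat{A}_{ad,av}\boxtimes\hat{A}_{bd,bv}$.

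The main obstacle will be the careful scalar bookkeeping: matching the $(1-q_i^{-1})^\bullet$ factors from \eqref{elem:E} on both sides, the twist $\omega_{a,b}^{-1}$, the asymptotic scalars from the cross kernels, and the bridge-pole contribution. Once every scalar is tracked, the identity reduces to the manifest factorization of the shuffle integrand in the leading $t$-order.
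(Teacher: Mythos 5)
Your overall strategy---pass through $\Psi$ and compute $\widetilde{\Delta}_{ab}$ on the shuffle representative \eqref{def:Ambullet2} as an extraction of the top $\mu$-weight piece (which is legitimate here precisely because the class comes from the quasi-BPS category, so all its $T(nd)$-weights pair with $\mu$ at most the face value)---is a viable route, close in spirit to Negu\c{t}'s slope-coproduct computations, and different in technique from the paper, which instead expands $A_{nd,nv}$ as an alternating sum of classes $\Gamma_{GL(nd)}((\chi_n-\sigma_I)^+)$ over subsets $I\subset L_n$ and isolates the face terms by the polytope analysis of Proposition \ref{propboundary}. However, your justification of the crucial step fails. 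You claim that nontrivial cosets die because ``at least one cross kernel $\xi(tz_jz_i^{-1})$ is forced into a lower $t$-order.'' This is false: since $\xi(x)\sim -q^{-1}x$ as $x\to\infty$ and $\xi(x)\sim -q^{-1}x^{-1}$ as $x\to 0$, every one of the $abd^2$ cross kernels contributes leading order $t^{+1}$ regardless of how the permutation orients it, so the cross kernels cannot distinguish cosets at all. What actually suppresses a nontrivial coset is the interplay of the exponents $m_i$ of \eqref{def:mi} with the chain denominators $(1-q^{-1}z_i^{-1}z_{i+1})^{-1}$: each chain step whose left variable is unscaled and whose right variable is scaled costs a factor $t^{-1}$, and one needs the combinatorial inequality that, for $S\subset\{1,\dots,nd\}$ the set of chain positions carrying second-block variables, $\sum_{i\in S}m_i-\#\{\mathrm{ascents\ of\ }S\}\leq bv$, with equality only for $S=\{ad+1,\dots,nd\}$. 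This inequality is exactly the heart of the matter (it is the shuffle-side shadow of the paper's argument that $d_{j+1,j}>-\tfrac{3}{2}$ forces $C=\{1,\dots,ad\}$, i.e.\ of Proposition \ref{propboundary}), it is where the slope structure of the $m_i$ enters, and it is entirely missing from your sketch. It is also needed for a second reason you do not address: the term-by-term $t\to\infty$ asymptotics of the rational coset summands only compute the coefficient of the face power of $t$ if no individual coset has leading degree strictly above it; the global polytope bound on the sum does not by itself prevent higher-order per-coset terms from cancelling at top order while contaminating the face coefficient through their subleading parts.

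Two smaller points of bookkeeping. First, the bridge factor on the trivial coset contributes $(1-q^{-1}z_{ad}^{-1}z_{ad+1})^{-1}\sim -q\,z_{ad}z_{ad+1}^{-1}t^{-1}$, i.e.\ a factor $-q$ (together with the monomial shift that reconciles $m_{ad},m_{ad+1}$ with the $\delta$-corrections in the exponents for $A_{ad,av}$ and $A_{bd,bv}$), matching the paper's identity $\widetilde{\Delta}_{ab}(E_n)=(-q)(-1)^{abd^2}q^{\nu_{a,b}}(E_a\boxtimes E_b)$; your ``extra factor of $-q^{-1}$'' sits on the wrong side and, as written, leaves a discrepancy of $q^{-2}$ against the normalization $(-q^{-1})^{n-1}=(-q^{-1})^{a-1}(-q^{-1})^{b-1}(-q^{-1})$. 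Second, when you identify the product of cross-kernel leading terms with $\omega_{a,b}$ (so that the twist $\omega_{a,b}^{-1}$ cancels it), you should also record the monomial part $\prod_{i>ad\geq j} z_iz_j^{-1}$ of $q^{\nu_{a,b}}$, not only the scalar $(-q^{-1})^{abd^2}$; the scalars alone do not match. With the coset inequality proved and these factors tracked, your argument would go through and would essentially recover the paper's computation by analytic rather than weight-theoretic means.
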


For $d, v, n$ as above, let $O_n$ and $L_n$
be the following sets of $(T\times T(d))$-weights:
\begin{align*}
O_n&:=\{\beta_j-\beta_i+\gamma \mid \,i>j+1\}, \\
    L_n&:=\{\beta_i-\beta_j+\gamma_l \mid \,i>j,\, 1\leq l\leq 2\}\sqcup O_n.
\end{align*}
For a subset $I\subset L_n$, let $\sigma_I$ be the sum of the corresponding $T(d)$-weights in $I$, let $\gamma_I$ be the sum of the weights of the corresponding $T$-weights in $I$, and let $q_I:=q^{\gamma_I}$.
Let $\ell(I)$ be the length of the minimal Weyl element such that $w*(\chi-\sigma_I)$ is dominant or zero. 
Let 
 \[\chi_n:=\sum_{i=1}^{nd-1}\left(\frac{vi}{d}+1-\Big\lceil\frac{vi}{d}\Big\rceil\right)(\beta_{i+1}-\beta_i)+\frac{v}{d}\sum_{i=1}^{nd}\beta_i\in M(nd).\]
The element $A_{nd, nv}\in K_T\left(\X(nd)\right)$ from \eqref{def:Ambullet2} can be written as
\[A_{nd, nv}:=\sum_{I\subset L_n}(-1)^{|I|-\ell(I)}q_I^{-1}\left[\Gamma_{GL(nd)}\left((\chi_n-\sigma_I)^+\right)\otimes\mathcal{O}_{\X(nd)}\right].\]
Recall the framework on Subsection \ref{coproductdiml}.
Consider the composition of the Koszul equivalence \eqref{equiv:Phi} and the forget-the-potential map \eqref{definition:forgetpotential}:
\[\Psi: K_T(\mathbb{T}(nd)_{nv})_{\mathbb{F}}\xrightarrow{\sim} K_T(\mathbb{S}(nd)_{nv})_{\mathbb{F}}\xrightarrow{\Theta} K_T(\mathrm{MF}(\X(d),0))_{\mathbb{F}}\xrightarrow{\sim} K_T\left(\X(nd)\right)_{\mathbb{F}}.\] 
Using \eqref{elem:E} and the computation in \cite[Lemma 4.3]{PT0}, we have that \[\Psi\left((-q)^{n-1}\hat{A}_{nd, nv}\right)=A_{nd, nv}.\]
\begin{remark}
Note that $\Psi\left(\hat{A}_{nd, nv}\right)$ equals the shuffle element $E_{k, d}$ considered by Negu\c{t} in \cite[Equation 2.10]{Negut3}, where $q$, $k$, $d$ in loc. cit. correspond to $q^{-1}$, $nd$, and $nv$, respectively, in our paper.\end{remark}

\begin{proof}[Proof of Proposition \ref{1236bis}]

By the definition of $\Delta$ from Subsection \ref{coproductdiml}, 
it suffices to show that
\begin{equation}\label{deltaab}
\widetilde{\Delta}_{ab}\left(A_{nd, nv}\right)=\left(A_{ad, av}\boxtimes A_{bd, bv}\right)\otimes (-q)(-1)^{abd^2}q^{\nu_{a, b}},
\end{equation}
where we are using $\widetilde{\Delta}_{ab}$ instead of $\widetilde{\Delta}_{ad, bd}$.
Let $I\subset L_n$. Then 
\[\chi_n-\sigma_I+\rho=\sum_{i>j}d_{ij}(\beta_i-\beta_j)+\frac{v}{d}\sum_{i=1}^{nd}\beta_i,\] where 
\begin{align*}
    -\frac{3}{2}\leq d_{ij}\leq \frac{3}{2},\ 
    -\frac{3}{2}<d_{j+1, j}\leq \frac{3}{2}.
\end{align*}
Let $\lambda$ be the antidominant cocharacter associated to the partition $(ad, bd)$ of $nd$.
 Assume such a weight is on a wall $F\left(w\lambda\right)$ for some $w\in \mathfrak{S}_{nd}$. Then there exists a partition 
$E\sqcup C=\{1,\cdots, nd \}$
with $|C|=ad$ and 
\begin{equation}\label{dij32}
    d_{ij}=\begin{cases} \frac{3}{2}\text{ for }i\in E, j\in C\text{ and }i>j,\\
    -\frac{3}{2}\text{ for }i\in E, j\in C\text{ and }i<j,
    \end{cases}
\end{equation}
see \cite[Lemma 3.12]{hls}, \cite[Proposition 3.2]{P}.
We claim that $C=\{1,\ldots, ad\}$. Otherwise, there exists $1\leq j\leq ad$ with $j\in E$ and $j+1\in C$. Then $d_{j+1, j}>-\frac{3}{2}$ and this contradicts \eqref{dij32}. Then $d_{ij}=-\frac{3}{2}$ for $i>ad\geq j$. Further, 
we have 
\begin{align*}
    I \subset Q:=\{\beta_j-\beta_i+\gamma \mid i>ad\geq j, i-1>j\}
\end{align*}
and $I$ does not contain any weights $\beta_i-\beta_j-\gamma_l$ for $i>ad\geq j$ and $l\in\{1, 2\}$. Define $L_a$
and $L_b$\
similarly to $L_n$, 
using the weights $\beta_i$ with $1\leq i\leq ad$ for $L_a$ and $\beta_i$ with $ad<i\leq nd$ for $L_b$. 
We can thus write $I=Q\sqcup I_a\sqcup I_b$, where $I_a\subset L_a$ and $I_b\subset L_b$. Let $\mathcal{I}$ be the set of such sets $I\subset L_n$.
We have that, see \eqref{computationdeltab}: 
\begin{equation}\label{coproducttilde}
    \widetilde{\Delta}_{ab}
\left(A_{nd, nv}\right)=\widetilde{\Delta}_{ab}\left(\sum_{I\in\mathcal{I}}
(-1)^{|I|-\ell(I)}q_I^{-1}\left[\Gamma_{GL(nd)}\left((\chi_n-\sigma_I)^+\right)\otimes\mathcal{O}_{\X(nd)}\right]\right).
\end{equation}
Write $I=Q\sqcup I_a\sqcup I_b$ with $I_a\subset L_a$ and $I_b\subset L_b$. A direct computation shows that 
\[\chi_n-\sigma_I+\rho=\sum_{i>ad\geq j}\frac{3}{2}(\beta_i-\beta_j)-(d^2ab-1)\gamma
+
(\chi_a-\sigma_{I_a}+\rho_a)+(\chi_b-\sigma_{I_b}+\rho_b).\]
Let $w\in\mathfrak{S}_{nd}$ be the Weyl element of minimal length such that $w*(\chi_n-\sigma_I)$ is dominant, and let $w_a\in \mathfrak{S}_{na}$ and $w_b\in\mathfrak{S}_{nb}$ be the Weyl elements of minimal length such that
$w_a*(\chi_a-\sigma_{I_a})$ and $w_b*(\chi_b-\sigma_{I_b})$ are dominant. 
Then $w=w_a w_b$,
therefore
\[(\chi_n-\sigma_I)^+=\nu_{a, b}+\gamma+
(\chi_a-\sigma_{I_a})^++
(\chi_b-\sigma_{I_b})^+.\]
We have that $|Q|=d^2ab-1$, and 
\begin{align*}
    |I|=d^2ab-1+|I_a|+|I_b|,\
    \ell(I)=\ell(I_a)+\ell(I_b).
\end{align*}
For $m\in\{a, b, n\}$, let
\[E_m:=(-1)^{|I_m|-\ell(I_m)}\left[\Gamma_{GL(md)}\left((\chi_m-\sigma_{I_m})^+\right)\otimes
\mathcal{O}_{\X(md)}\right].\] 
The coproduct \eqref{coproducttilde} thus simplifies to 
\[\widetilde{\Delta}_{ab}(E_n)=(-q)(-1)^{d^2ab} q^{\nu_{a,b}}\left(E_a \boxtimes E_b\right).\] The conclusion thus follows. 
\end{proof}

\begin{cor}\label{corV}
    The operations $m$ and $\Delta$ endow $V$ with the structure of an $\mathbb{N}$-graded commutative and cocommutative $\mathbb{F}$-bialgebra.
\end{cor}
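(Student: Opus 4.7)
The plan is to combine Propositions~\ref{commutative} and \ref{1236bis} with Corollary~\ref{cor:44}, using that by \cite[Theorem~1.2]{PT0} together with \eqref{basis:S'}--\eqref{SS'}, the $\mathbb{F}$-algebra $V$ is generated by the classes $\hat{A}_{nd, nv}$ for $n \geq 1$. Associativity of $m$ and coassociativity of $\Delta$ are formal consequences of the categorical constructions: one has functoriality of attracting-locus diagrams under composition of antidominant cocharacters for $m$, and parallel functoriality for dominant cocharacters together with the Koszul equivalence \eqref{koszuleqev} and multiplicativity of the twist factors $\omega_{AC}$ for $\Delta$. The bialgebra compatibility of $m$ and $\Delta$ is exactly Corollary~\ref{cor:44}, and commutativity of $m$ follows from Proposition~\ref{commutative} applied to pairs of generators.

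It remains to establish cocommutativity, namely $\Delta_{a,b}(z) = \text{sw} \circ \Delta_{b,a}(z)$ for every $z \in K_T(\mathbb{T}(nd)_{nv})_{\mathbb{F}}$ and every $a+b = n$, where $\text{sw}\colon V_{bd,bv} \otimes V_{ad,av} \to V_{ad,av} \otimes V_{bd,bv}$ is the standard swap. I would argue by induction on the number of factors in a monomial expansion of $z$ in the $\hat{A}_{md, mv}$. The base case of a single generator is Proposition~\ref{1236bis}, which gives $\Delta_{a,b}(\hat{A}_{nd,nv}) = \hat{A}_{ad,av} \boxtimes \hat{A}_{bd,bv}$, manifestly symmetric in $a \leftrightarrow b$. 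For the inductive step on $z = x \ast y$ with $x \in V_{a'}, y \in V_{b'}$ cocommutative by hypothesis, Corollary~\ref{cor:44} expresses $\Delta_{c,e}(z)$ as a sum over $4$-tuples $(f_1, f_2, f_3, f_4) \in S$. The involution $\phi\colon (f_1, f_2, f_3, f_4) \mapsto (f_2, f_1, f_4, f_3)$ bijects the indexing set $S$ for $\Delta_{c,e}$ with the corresponding set for $\Delta_{e,c}$; combined with the inductive cocommutativity $\text{sw} \circ \Delta_{f_2, f_1}(x) = \Delta_{f_1, f_2}(x)$ and likewise for $y$, a direct rewriting identifies $\text{sw} \circ \Delta_{e,c}(z)$ with $\Delta_{c,e}(z)$ term by term.

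The main obstacle in the inductive step is the bookkeeping of the inner swap $\text{sw}_{f_2 f_3}$ and the Hall products $m_{f_1 f_3}, m_{f_2 f_4}$ in the definition of $m'$, together with the twist factors $\omega_{AC}$ that implicitly enter $\Delta$. These transform compatibly under $\phi$: the twist $\omega_{AC}$ depends only on the dimensions of the blocks $f_i$, and the factor $(-1)^{abd^2}q^{\nu_{a,b}}$ that appears in the proof of Proposition~\ref{1236bis} transforms symmetrically under interchanging the two blocks, so applying the outer swap to $\Delta_{e,c}(z)$ recovers the same sign and weight twist as in $\Delta_{c,e}(z)$. Verifying this matching is essentially a symbolic computation in the same spirit as the proof of Corollary~\ref{cor:44}.
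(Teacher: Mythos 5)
Your proposal is correct and follows essentially the same route as the paper: the corollary is deduced from Propositions~\ref{commutative} and \ref{1236bis} together with the product--coproduct compatibility (Theorem~\ref{prodcoprod}/Corollary~\ref{cor:44}), using that $V$ is spanned by monomials in the classes $\hat{A}_{nd,nv}$ via \eqref{basis:S'} and \eqref{SS'}. Your inductive cocommutativity argument is just an unrolled version of the observation that $\Delta$ and $\mathrm{sw}\circ\Delta$ are both algebra maps (the latter using commutativity of $m$) that agree on the generators $\hat{A}_{nd,nv}$ by Proposition~\ref{1236bis}.
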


\subsection{Primitive elements}\label{subsection:primitive}

Let $R'_n\subset R_n$ be the complement of the trivial partition $n$. 
For $A=(n_i)_{i=1}^k$, let $\mathbb{T}_A:=\otimes_{i=1}^k \mathbb{T}(n_id)_{n_iv}$.
Define
\[\mathrm{P}(nd)_{nv}:=\text{ker}\left(\bigoplus_{A\in R'_n} \Delta_A: K_T\left(\mathbb{T}(nd)_{nv}\right)\to \bigoplus_{A\in R'_n} K_T\left(\mathbb{T}_A\right)\right).\]
Let $\mathrm{P}(nd)_{nv, \mathbb{F}}:=\mathrm{P}(nd)_{nv}\otimes_{\mathbb{K}}\mathbb{F}$. 
Corollary \ref{corV} can be rephrased as follows, see the isomorphism \eqref{PBWmacdonald}:
\begin{cor}\label{isolambda}
Recall the bialgebra $\Lambda_\mathbb{F}:=\lambda\otimes_\mathbb{Z}\mathbb{F}$ from Subsection \eqref{subsection:symmetric}.
    There exists an isomorphism of bialgebras
    \begin{align*}
    \Phi \colon \Lambda_{\mathbb{F}}&\cong V,\\
    e_n&\mapsto\widehat{A}_{nd, nv}.
    \end{align*}
    In particular, the $\mathbb{F}$-vector space $\mathrm{P}(nd)_{nv, \mathbb{F}}$ is one dimensional.
\end{cor}

\begin{proof}
    Both $\Lambda_{\mathbb{F}}$ and $V$ are commutative and cocommutative, and $\Phi$ is a morphism of algebras by construction. The coproduct is respected by Proposition \ref{1236bis}. Finally, $\Phi$ is an isomorphism of $\mathbb{N}$-graded vector spaces by \cite[Theorem 4.12]{PT0}, so the conclusion follows.
\end{proof}


\begin{remark}
The isomorphism $\Phi$
sends $e_n$ to $\widehat{A}_{nd, nv}$. 
Since $\widehat{A}_{nd, nv}$ is not contained in the integral part
$K_T(\mathbb{T}(nd)_{nv})$, it does not restrict to 
a morphism $\Lambda_{\mathbb{K}} \to K_T(\mathcal{D}_{d, v})$. 
On the other hand, we expect the existence of 
a McKay type functor 
$\mathrm{MF}^{\mathrm{gr}}([\mathbb{C}^{3d}/\mathfrak{S}_d], 0) \to \mathbb{T}(nd)_{nv}$
which may induce an isomorphism \[\Lambda_{\mathbb{K}} \stackrel{\cong}{\to} K_T(\mathcal{D}_{d, v}).\]
For a full understanding of $K_T(\mathbb{T}(nd)_{nv})$, we thus need to construct an isomorphism different from the one in Corollary~\ref{isolambda}. 
Such an isomorphism will be discussed in~\cite{PTsym}.
\end{remark}

Finally, we prove 
Corollary~\ref{intro:corDT}: 
\begin{cor}\label{corDT}
There is an isomorphism of $\mathbb{N}$-graded $\mathbb{F}$-vector spaces:
\begin{equation}\notag
\bigoplus_{d\geq 0}K_T(\mathcal{DT}(d))_{\mathbb{F}}\cong \bigotimes_{\substack{0\leq v<d \\ \gcd(d,v)=1}} \left(\bigotimes_{n\geq 1}\mathrm{Sym}\big(\mathrm{P}(nd)_{nv, \mathbb{F}}\big)\right).
\end{equation}
\end{cor}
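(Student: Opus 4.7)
The plan is to stitch together the semiorthogonal decomposition \eqref{thm:1PT0}, the Koszul equivalence \eqref{equiv:Phi}, Corollary~\ref{isolambda}, and the PBW identity \eqref{PBWmacdonald} for symmetric functions.

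First, combining \eqref{thm:1PT0} with \eqref{isom:DTgrade} and Koszul duality $\mathbb{T}(d)_v \cong \mathbb{S}^{\mathrm{gr}}(d)_v$ gives
\[\bigoplus_{d \geq 0} K_T(\mathcal{DT}(d))_{\mathbb{F}} \cong \bigoplus_{(d_i, v_i)_{i=1}^k} \bigotimes_{i=1}^k K_T(\mathbb{T}(d_i)_{w_i})_{\mathbb{F}},\]
where the outer sum ranges over tuples (including $k=0$) with $0 \leq v_1/d_1 < \cdots < v_k/d_k < 1$ and $w_i = v_i + d_i S_i$ for $S_i = \sum_{j<i} d_j - \sum_{j>i} d_j$. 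The equivalence $\mathbb{T}(d)_v \otimes \det(V)^S \xrightarrow{\sim} \mathbb{T}(d)_{v+dS}$, which follows from the definition of $\mathbb{T}(d)_v$ in Subsection~\ref{subsection:Tdv} together with the fact that $\det(V)$ has $1_d$-weight $d$, lets me replace $w_i$ by $v_i$ in each tensor factor via $-\otimes \det(V_i)^{-S_i}$.

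Next, I would regroup by slopes. A strictly increasing tuple of slopes in $[0,1) \cap \mathbb{Q}$ amounts to a finitely-supported function $(d,v) \mapsto m_{(d,v)} \in \mathbb{N}$ on the set of coprime pairs with $0 \leq v < d$, via $(d_i, v_i) = (m_{(d,v)} d, m_{(d,v)} v)$ for each slope present; allowing $m_{(d,v)}=0$ on absent slopes only appends trivial factors $K_T(\mathbb{T}(0)_0)_{\mathbb{F}} = \mathbb{F}$. The direct sum therefore factorizes as
\[\bigotimes_{\substack{0 \leq v < d \\ \gcd(d, v) = 1}} \left( \bigoplus_{m \geq 0} K_T(\mathbb{T}(md)_{mv})_{\mathbb{F}} \right).\]
Each inner factor equals $K(\mathcal{D}_{d,v})_{\mathbb{F}}$, which by Corollary~\ref{isolambda} is isomorphic to $\Lambda_{\mathbb{F}}$; applying \eqref{PBWmacdonald} together with $\dim_{\mathbb{F}} \mathrm{P}(nd)_{nv, \mathbb{F}} = 1$ (also Corollary~\ref{isolambda}) identifies $\Lambda_{\mathbb{F}}$ with $\bigotimes_{n\geq 1} \mathrm{Sym}(\mathrm{P}(nd)_{nv, \mathbb{F}})$, and assembling yields the desired isomorphism.

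The main subtlety is the weight-shift step: since $w_i$ couples to the full partition through $S_i$, the naive factorization into a tensor product over slopes is a priori obstructed, and the line-bundle autoequivalence $-\otimes \det(V_i)^{-S_i}$ is precisely what decouples the $i$-th factor from the rest. As a consistency check, and as an alternative route if one wants only an abstract isomorphism of $\mathbb{N}$-graded $\mathbb{F}$-vector spaces, combining $\dim_{\mathbb{F}} K_T(\mathbb{T}(d)_v)_{\mathbb{F}} = p(\gcd(d,v))$ from \cite[Theorem~1.2]{PT0} with the identity $\sum_{d \mid N} \phi(d) = N$ recovers the generating function $\prod_N (1 - q^N)^{-N}$ on both sides, matching \eqref{wallcrossingDT}.
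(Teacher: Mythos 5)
Your proposal is correct and follows essentially the same route as the paper: the semiorthogonal decomposition of $\mathcal{DT}(d)$ from \cite[Theorem 1.1]{PT0}, the weight-shift equivalences (your $-\otimes\det(V)^{-S_i}$ twist is exactly the paper's equivalence $\mathbb{S}^{\bullet}_T(d)_w\simeq\mathbb{S}^{\bullet}_T(d)_{d+w}$), regrouping the slope data into a tensor product over coprime pairs, and then Corollary~\ref{isolambda} together with \eqref{PBWmacdonald}. The only cosmetic difference is that you phrase everything on the $\mathbb{T}$-side via Koszul duality, whereas the paper stays with $\mathbb{S}(d_i)_{v_i}$, which changes nothing at the level of K-theory.
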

\begin{proof}
For $\bullet\in\{\emptyset, \mathrm{gr}\}$, there are equivalences $\mathbb{S}^\bullet_T(d)_w\xrightarrow{\sim}\mathbb{S}^\bullet_T(d)_{d+w}$. By \cite[Theorem 1.1]{PT0}, there is an isomorphism:
\begin{equation}\label{5131}
K_T(\mathcal{DT}(d))_{\mathbb{F}}\cong \bigoplus_{\substack{0\leq v_1/d_1<\ldots<v_k/d_k<1\\d_1+\ldots+d_k=d}} \bigotimes_{i=1}^k K_T\left(\mathbb{S}(d_i)_{v_i}\right)_\mathbb{F}.
\end{equation}
Further, there are isomorphisms of $\mathbb{N}$-graded $\mathbb{F}$-vector spaces:
\begin{equation}\label{5132}
\bigoplus_{\substack{0\leq v_1/d_1<\ldots<v_k/d_k<1}} \bigotimes_{i=1}^k K_T\left(\mathbb{S}(d_i)_{v_i}\right)_\mathbb{F}\cong \bigotimes_{\substack{0\leq \mu<1 \\
\mu=a/b, \ \gcd(a, b)=1}}\left(\bigoplus_{n\ge 1}K_T\left(\mathbb{S}(nb)_{na}\right)_\mathbb{F}\right).
\end{equation}
For each coprime $(a, b) \in \mathbb{Z} \times \mathbb{N}$, 
by Corollary \ref{isolambda} and the isomorphism \eqref{PBWmacdonald}, we obtain the isomorphism: 
\begin{equation}\label{5133}
\bigoplus_{n\ge 1}K_T\left(\mathbb{S}(nb)_{na}\right)_\mathbb{F}
\cong  \bigotimes_{n\geq 1}\mathrm{Sym}\left(\mathrm{P}(nb)_{na, \mathbb{F}}\right).
\end{equation}
We obtain the conclusion by combining the isomorphisms \eqref{5131}, \eqref{5132}, \eqref{5133}. 
\end{proof}

We explain how the above isomorphism categorifies \eqref{wallcrossingDT} up to a sign.
We use the same computation as in \cite[Subsection 4.7]{PT0}. Let $a_d:=\dim_{\mathbb{F}} K_T(\mathcal{DT}(d))_{\mathbb{F}}$. We have that $\dim_\mathbb{F} \mathrm{P}(nb)_{na, \mathbb{F}}=1$ for any $(a,b)$ with $\gcd(a,b)=1$ and $n\in\mathbb{Z}_{\geq 1}$. 
For each $d \in \mathbb{Z}_{\geq 1}$, there is a bijection 
	\begin{align*}
		\{(n, a, b) \in \mathbb{Z}_{\geq 0}^3 : d=bn, \gcd(a, b)=1, 0\leq a<b\}
		\stackrel{\cong}{\to}
		\{0, 1, \ldots, d-1\}
		\end{align*}
	given by $(n, a, b) \mapsto na$. In particular, the number of 
	the elements of the left hand side equals to $d$. 
	We compute
	\[
	\sum_{d\geq 0} a_d q^d= 
\prod_{\begin{subarray}{c}
				0\leq \mu<1 \\
		\mu=a/b, \ \gcd(a, b)=1
		\end{subarray}	
		}
	\prod_{n\geq 1}\frac{1}{1-q^{bn}}=\prod_{d\geq 1}\frac{1}{\left(1-q^d\right)^d}.\]
Compare with the wall-crossing formula for DT invariants \eqref{wallcrossingDT}.

	\bibliographystyle{amsalpha}
\bibliography{math}

\textsc{\small Tudor P\u adurariu: Columbia University, 
Mathematics Hall, 2990 Broadway, New York, NY 10027, USA.}\\
\textit{\small E-mail address:} \texttt{\small tgp2109@columbia.edu}\\

\textsc{\small Yukinobu Toda: Kavli Institute for the Physics and Mathematics of the Universe (WPI), University of Tokyo, 5-1-5 Kashiwanoha, Kashiwa, 277-8583, Japan.}\\
\textit{\small E-mail address:} \texttt{\small yukinobu.toda@ipmu.jp}\\

\end{document}